\renewcommand{\cal}{\mathcal}
\def\A{\mathscr{A}}
\def\C{\mathscr{C}}
\def\s{\mathfrak{s}}
\def\op{^\mathrm{op}}
\def\Ab{\mathit{Ab}}
\def\del{\delta}
\def\dr{\ar@{->}[r]}
\def\S{\Sigma}
\def\ex{{\mathbf{ex}}}
\def\fx{{\mathbf{fx}}}
\def\T{\mathcal{T}}
\def\I{\mathcal {I}}
\def\X{\mathscr{X}}
\def\Y{\mathscr{Y}}
\def\x{\mathbf{x}}
\def\X{\mathscr{X}}
\def\Y{\mathscr{Y}}
\newcommand{\End}{\operatorname{End}\nolimits}
\def\add{\mathsf{add}\hspace{.01in}}
\def\Ext{\mbox{Ext}}
\def\Hom{\mbox{Hom}}
\def\pj{\mathsf{Proj\emph{-}Inj}}
\def\proj{\mathsf{Proj}}
\def\inj{\mathsf{Inj}}
\def\E{\mathbb{E}}
\def\Z{\mathbb{Z}}
\def\F{\mathcal{F}}
\newcommand{\Q}{\mathbb{Q}}
\begin{document}
\baselineskip=15pt
\begin{center}
{\Large{\bf Cluster subalgebras and cotorsion pairs in Frobenius extriangulated categories}

\bigskip
{\large Wen Chang
\footnote{Supported by the Fundamental Research Funds for the Central Universities (Grant No.\;1301030603) and the NSF of China (Grant No.\;11601295)},
Panyue Zhou\footnote{Supported by the Hunan Provincial Natural Science Foundation of China (Grant No.\;2018JJ3205)}
and Bin Zhu\footnote{Supported by the NSF of China (Grants No.11671221)}}}
\end{center}
\date{}

\def\blue{\color{blue}}
\def\red{\color{red}}

\newtheorem{theorem}{Theorem}[section]
\newtheorem{lemma}[theorem]{Lemma}
\newtheorem{corollary}[theorem]{Corollary}
\newtheorem{proposition}[theorem]{Proposition}
\newtheorem{conjecture}{Conjecture}
\theoremstyle{definition}
\newtheorem{definition}[theorem]{Definition}
\newtheorem{question}[theorem]{Question}
\newtheorem{remark}[theorem]{Remark}
\newtheorem{remark*}[]{Remark}
\newtheorem{example}[theorem]{Example}
\newtheorem{example*}[]{Example}

\newtheorem{construction}[theorem]{Construction}
\newtheorem{construction*}[]{Construction}

\newtheorem{assumption}[theorem]{Assumption}
\newtheorem{assumption*}[]{Assumption}
\newtheorem{defn-prop}[theorem]{Definition-Proposition}
\baselineskip=17pt
\parindent=0.5cm

\begin{abstract}
\baselineskip=16pt
 Nakaoka and Palu introduced the notion of extriangulated categories by extracting the similarities between exact categories and triangulated categories. In this paper, we study cotorsion pairs in a Frobenius extriangulated category $\C$. Especially, for a $2$-Calabi-Yau extriangulated category $\C$ with a cluster structure, we describe the cluster substructure in the cotorsion pairs. For rooted cluster algebras arising from $\C$ with cluster tilting objects, we give a one-to-one correspondence between cotorsion pairs in $\C$ and certain pairs of their rooted cluster subalgebras which we call complete pairs. Finally, we explain this correspondence by an example relating to a Grassmannian cluster algebra.\\[0.5cm]
\textbf{Key words:} Frobenius extriangulated categories; 2-Calabi-Yau extriangulated (or triangulated) categories; Cotorsion pairs; Cluster algebras.\\[0.2cm]
\textbf{ 2010 Mathematics Subject Classification:} 16S99; 16S70; 18E30.
\medskip
\end{abstract}

\tableofcontents

\section*{Introduction}
Triangulated categories and exact categories are two fundamental structures in mathematics. They are also important tools in many {\red mathematical} branches.
It is well known that these two kinds of categories have some similarities, there are even direct connections between them. For example, by a classical result of Happel \cite{ha}, the stable category of a Frobenius category, which is a special exact category, is a triangulated category, where the triangulated structure inherits from the exact structure. By extracting the similarities between triangulated categories and exact categories, Nakaoka and Palu \cite{np} recently introduced the notion of extriangulated categories, whose extriangulated structures are given by $\E$-triangles with some axioms. Except triangulated categories and exact categories, there are many other examples for extriangulated categories \cite{np,zhz}.

In recent years, the categorification is a topic of general interest. Roughly speaking, given a mathematical structure, a categorification is to find a category {\red whose Grothendieck group or Hall algebra} has this structure. For example, some $2$-Calabi-Yau triangulated categories or exact stably $2$-Calabi-Yau categories \cite{bmrrt, gls2, gls3, birs, fk} have cluster structures, which categorify the cluster algebras with or without coefficients introduced by Fomin and Zelevinsky \cite{fz}. Recall that an exact category $\cal B$ is called \emph{exact stably $2$-Calabi-Yau} if it is Frobenius, that is, $\cal B$ has enough projectives and enough injectives, which coincide, and the stable category $\overline{\cal B}$, which is triangulated
[Ha], is $2$-Calabi-Yau. Examples of exact stably $2$-Calabi-Yau categories relating to categorification and cluster tilting theory, are the module category over the preprojective algebra of a Dynkin quiver, and the category of maximal Cohen-Macaulay modules over a
$3$-dimensional complete local commutative isolated Gorenstein singularity. In this paper we study extriangulated categories in the view of the categorification of cluster algebras.

The aim of this paper is to give a classification of cotorsion pairs in a $2$-Calabi-Yau extriangulated category, and study the cluster substructures in cotorsion pairs of an extriangulated category with a cluster structure, where a cotorsion pair consists of two subcategories which generate the whole extriangulated category by extensions given by $\E$-triangles. Here a $k$-linear {\red Hom-finite and Ext-finite }extriangulated category $(\C,\E,\s)$ is called $2$-Calabi-Yau if there is a functorially isomorphism $\E(x,y)\simeq D\E(y,x)$, for any $x,y\in \C$, where $D$ is {\red the} $k$-duality. We will give a one-to-one correspondence between the cotorsion pairs of a $2$-Calabi-Yau extriangulated category and the complete pairs of the rooted cluster subalgebras of the rooted cluster algebra categorified by the extriangulated category (see Section 5 for precise meaning). Here a rooted cluster algebra is just a cluster algebra associated with a fixed cluster. This correspondence is established recently in the case of triangulated category in \cite{cz}.


The main examples of our $2$-Calabi-Yau extriangulated categories are exact stably $2$-Calabi-Yau categories and $2$-Calabi-Yau triangulated categories. Applying the main results to
exact stably $2$-Calabi-Yau categories produces new results on classification of cotorsion pairs in an exact stably $2$-Calabi-Yau category, cluster substructure on cotorsion pairs and a new connection between cotorsion pairs and pairs of cluster subalgebras of a rooted cluster algebras which are categorified by these exact stably $2$-Calabi-Yau categories. In particular, one can get corresponding results on cluster subalgebras of cluster algebras categorified by module categories over preprojective algebras studied in \cite{gls2,gls3}. Applications to $2$-Calabi-Yau triangulated categories reproduce the results obtained recently by the first and the third author in \cite{cz}. The basic approach in the paper is to use the mutation pairs studied in \cite{zhz} to establish a relation between the cotorsion pairs in extriangulated {\red categories} and the cotorsion pairs in some triangulated quotient categories.

The paper is organized as follows: In Section $1$, we review some definitions and facts that we need to use. Moreover, we introduce the notion of {\red a} $2$-Calabi-Yau extriangulated category, and give some examples. In Section 2, we find a one-to-one correspondence between the cotorsion {\red pairs} of {\red a} $2$-Calabi-Yau extriangulated category and certain quotient {\red categories}. In Section 3, we introduce the notion of {\red a} cluster tilting subcategory  in {\red an} extriangulated category and discuss some properties. We also
study the mutation of cluster tilting subcategory in a $2$-Calabi-Yau extriangulated category. In Section 4, we give a classification of cotorsion pairs in {\red a} $2$-Calabi-Yau extriangulated category with cluster tilting objects. Then we prove that the two subcategories in a cotorsion pair inherit the cluster structure in {\red an} extriangulated category. In Section 5, after we recall some definitions and properties on cluster algebras and rooted cluster algebras,  we construct the main correspondence between the cotorsion pairs and the complete pairs of {\red a} rooted cluster algebra. In Section 6,  to explain the correspondence, we give an example of {\red Grassmannian} cluster algebra of type $E_6$ which is categorified by an exact stably $2$-Calabi-Yau category.

{\red We end this section with some conventions.  All additive categories considered in this paper are assumed to be Krull-Schmidt, i.e. any object is isomorphic to a finite direct sum of objects whose
endomoprhism  rings are local.  If $\X$ is a subcategory of an additive category $\cal A$, then we always assume that $\X$ is a full subcategory which is closed under taking isomorphisms, direct sums and direct summands.
Let $\cal A$ be additive category and $M\in\cal A$, we denote by $\add M$ the full subcategory of $\cal A$ consisting of direct summands of direct sum of finitely many copies of $M$. We say an extriangulated category $(\C,\E,\s)$ is $k-$linear if $\Hom(A,B) $ and $\E(A,B)$ are $k$-linear spaces, for any $A,B\in\C$, where $k$ is a field. A k-linear extriangulated category $(\C,\E,\s)$ is Hom-finite (or Ext-finite), if $\Hom(A,B) $ ($\E(A,B),$ respectively) is a finite dimensional $k$-vector space, for any $A,B\in\C$. }

\section{Preliminaries}
\setcounter{equation}{0}
In this section, we collect some definitions and results that will be used in this paper.
\subsection{Functorially finite subcategories}

Recall that a subcategories $\X$ of an additive category $\C$ is said to be \emph{contravariantly finite} in $\C$ if for every object $M$ of $\C$, there exists some $X$ in $\X$ and a morphism $f\colon X\to M$ such that for every $X'$ in $\X$ the sequence
$$\Hom_{\C}(X', X)\xrightarrow{f\circ}\Hom_{\C}(X',M)\xrightarrow{~~} 0$$
is exact. In this case $f$ is called a right $\X$-approximation. Dually, we {\red define} covariantly finite subcategories in $\C$ and left $\X$-approximations. Furthermore, a subcategory of $\C$ is said to be \emph{functorially finite} in $\C$ if it is both contravariantly finite and covariantly finite in $\C$.
A morphism $f\colon A \rightarrow B$ in $\C$ is \emph{right
minimal} if any endomorphism $g\colon A\rightarrow A$ such that $fg=f$ is an automorphism;
and \emph{left minimal} if any endomorphism $h\colon B\rightarrow B$ such that $hf=f$ is an automorphism.  For more details, we refer to \cite{ar}.

\subsection{Extriangulated categories}
Let $\C$ be an additive category. Suppose that $\C$ is equipped with a biadditive functor $$\E\colon\C\op\times\C\to\Ab.$$ For any pair of objects $A,C\in\C$, an element $\delta\in\E(C,A)$ is called an {\it $\E$-extension}. Thus formally, an $\E$-extension is a triplet $(A,\delta,C)$.
Let $(A,\del,C)$ be an $\E$-extension. Since $\E$ is a bifunctor, for any $a\in\C(A,A')$ and $c\in\C(C',C)$, we have $\E$-extensions
$$ \E(C,a)(\del)\in\E(C,A')\ \ \text{and}\ \ \ \E(c,A)(\del)\in\E(C',A). $$
We abbreviately denote them by $a_\ast\del$ and $c^\ast\del$.
For any $A,C\in\C$, the zero element $0\in\E(C,A)$ is called the spilt $\E$-extension.

\begin{definition}{\cite[Definition 2.3]{np}}\label{a1}
Let $(A,\del,C),(A',\del',C')$ be any pair of $\E$-extensions. A {\it morphism} $$(a,c)\colon(A,\del,C)\to(A',\del',C')$$ of $\E$-extensions is a pair of morphisms $a\in\C(A,A')$ and $c\in\C(C,C')$ in $\C$, satisfying the equality
$$ a_\ast\del=c^\ast\del'. $$
Simply we denote it as $(a,c)\colon\del\to\del'$.
\end{definition}

Let $A,C\in\C$ be any pair of objects. Sequences of morphisms in $\C$
$$\xymatrix@C=0.7cm{A\ar[r]^{x} & B \ar[r]^{y} & C}\ \ \text{and}\ \ \ \xymatrix@C=0.7cm{A\ar[r]^{x'} & B' \ar[r]^{y'} & C}$$
are said to be {\it equivalent} if there exists an isomorphism $b\in\C(B,B')$ which makes the following diagram commutative.
$$\xymatrix{
A \ar[r]^x \ar@{=}[d] & B\ar[r]^y \ar[d]_{\simeq}^{b} & C\ar@{=}[d]&\\
A\ar[r]^{x'} & B' \ar[r]^{y'} & C &}$$

We denote the equivalence class of $\xymatrix@C=0.7cm{A\ar[r]^{x} & B \ar[r]^{y} & C}$ by $[\xymatrix@C=0.7cm{A\ar[r]^{x} & B \ar[r]^{y} & C}]$. For any $A,C\in\C$, we denote as
$ 0=[A\xrightarrow{\binom{1}{0}}A\oplus C\xrightarrow{(0,\ 1)}C].$

\begin{definition}{\cite[Definition 2.9]{np}}\label{a2}
Let $\s$ be a correspondence which associates an equivalence class $\s(\del)=[\xymatrix@C=0.7cm{A\ar[r]^{x} & B \ar[r]^{y} & C}]$ to any $\E$-extension $\del\in\E(C,A)$. This $\s$ is called a {\it realization} of $\E$, if it satisfies the following condition:
\begin{enumerate}[$\bullet$]
\item Let $\del\in\E(C,A)$ and $\del'\in\E(C',A')$ be any pair of $\E$-extensions, with $$\s(\del)=[\xymatrix@C=0.7cm{A\ar[r]^{x} & B \ar[r]^{y} & C}],\ \ \ \s(\del')=[\xymatrix@C=0.7cm{A'\ar[r]^{x'} & B'\ar[r]^{y'} & C'}].$$
Then, for any morphism $(a,c)\colon\del\to\del'$, there exists $b\in\C(B,B')$ which makes the following diagram commutative.
\begin{equation}\label{t10}
\begin{array}{l}
$$\xymatrix{
A \ar[r]^x \ar[d]^a & B\ar[r]^y \ar[d]^{b} & C\ar[d]^c&\\
A'\ar[r]^{x'} & B' \ar[r]^{y'} & C' &}$$
\end{array}
\end{equation}
\end{enumerate}
In this case, we say that sequence $\xymatrix@C=0.7cm{A\ar[r]^{x} & B \ar[r]^{y} & C}$ {\it realizes} $\del$, whenever it satisfies $$\s(\del)=[\xymatrix@C=0.7cm{A\ar[r]^{x} & B \ar[r]^{y} & C}].$$
Remark that this condition does not depend on the choices of the representatives of the equivalence classes. In the above situation, we say that (\ref{t10}) (or the triplet $(a,b,c)$) {\it realizes} $(a,c)$.
\end{definition}

Now we recall the definition of extriangulated categories introduced recently by Nakaoka and Palu, which is {\red the} main object to study in the paper.

\begin{definition}{\cite[Definition 2.12]{np}}\label{a3}
We call the pair $(\E,\s)$ an {\it external triangulation} of $\C$ if it satisfies the following conditions:
\begin{itemize}
\item[{\rm (ET1)}] $\E\colon\C\op\times\C\to\Ab$ is a biadditive functor.
\item[{\rm (ET2)}] $\s$ is an additive realization of $\E$.
\item[{\rm (ET3)}] Let $\del\in\E(C,A)$ and $\del'\in\E(C',A')$ be any pair of $\E$-extensions, realized as
$$ \s(\del)=[\xymatrix@C=0.7cm{A\ar[r]^{x} & B \ar[r]^{y} & C}],\ \ \s(\del')=[\xymatrix@C=0.7cm{A'\ar[r]^{x'} & B' \ar[r]^{y'} & C'}]. $$
For any commutative square
$$\xymatrix{
A \ar[r]^x \ar[d]^a & B\ar[r]^y \ar[d]^{b} & C&\\
A'\ar[r]^{x'} & B' \ar[r]^{y'} & C' &}$$
in $\C$, there exists a morphism $(a,c)\colon\del\to\del'$ which is realized by $(a,b,c)$.
\item[{\rm (ET3)$\op$}] Let $\del\in\E(C,A)$ and $\del'\in\E(C',A')$ be any pair of $\E$-extensions, realized by
$$\xymatrix@C=0.7cm{A\ar[r]^{x} & B \ar[r]^{y} & C}\ \ \text{and}\ \ \ \xymatrix@C=0.7cm{A'\ar[r]^{x'} & B' \ar[r]^{y'} & C'}$$
respectively.
For any commutative square
$$\xymatrix{
A \ar[r]^x& B\ar[r]^y \ar[d]^{b} & C\ar[d]^c&\\
A'\ar[r]^{x'} & B' \ar[r]^{y'} & C' &}$$
in $\C$, there exists a morphism $(a,c)\colon\del\to\del'$ which is realized by $(a,b,c)$.

\item[{\rm (ET4)}] Let $(A,\del,D)$ and $(B,\del',F)$ be $\E$-extensions realized by
$$\xymatrix@C=0.7cm{A\ar[r]^{f} & B \ar[r]^{f'} & D}\ \ \text{and}\ \ \ \xymatrix@C=0.7cm{B\ar[r]^{g} & C \ar[r]^{g'} & F}$$
respectively. Then there exist an object $E\in\C$, a commutative diagram
$$\xymatrix{A\ar[r]^{f}\ar@{=}[d]&B\ar[r]^{f'}\ar[d]^{g}&D\ar[d]^{d}\\
A\ar[r]^{h}&C\ar[d]^{g'}\ar[r]^{h'}&E\ar[d]^{e}\\
&F\ar@{=}[r]&F}$$
in $\C$, and an $\E$-extension $\del^{''}\in\E(E,A)$ realized by $\xymatrix@C=0.7cm{A\ar[r]^{h} & C \ar[r]^{h'} & E},$ which satisfy the following compatibilities.
\begin{itemize}
\item[{\rm (i)}] $\xymatrix@C=0.7cm{D\ar[r]^{d} & E \ar[r]^{e} & F}$  realizes $f'_{\ast}\del'$,
\item[{\rm (ii)}] $d^\ast\del''=\del$,

\item[{\rm (iii)}] $f_{\ast}\del''=e^{\ast}\del'$.
\end{itemize}

\item[{\rm (ET4)$\op$}]  Let $(D,\del,B)$ and $(F,\del',C)$ be $\E$-extensions realized by
$$\xymatrix@C=0.7cm{D\ar[r]^{f'} & A \ar[r]^{f} & B}\ \ \text{and}\ \ \ \xymatrix@C=0.7cm{F\ar[r]^{g'} & B \ar[r]^{g} & C}$$
respectively. Then there exist an object $E\in\C$, a commutative diagram
$$\xymatrix{D\ar[r]^{f'}\ar@{=}[d]&E\ar[r]^{f}\ar[d]^{h'}&F\ar[d]^{g'}\\
D\ar[r]^{f'}&A\ar[d]^{h}\ar[r]^{f}&B\ar[d]^{g}\\
&C\ar@{=}[r]&C}$$
in $\C$, and an $\E$-extension $\del^{''}\in\E(C,E)$ realized by $\xymatrix@C=0.7cm{E\ar[r]^{h'} & A \ar[r]^{h} & C},$ which satisfy the following compatibilities.
\begin{itemize}
\item[{\rm (i)}] $\xymatrix@C=0.7cm{D\ar[r]^{d} & E \ar[r]^{e} & F}$  realizes $g'^{\ast}\del$,
\item[{\rm (ii)}] $\del'=e_\ast\del''$,

\item[{\rm (iii)}] $d_\ast\del=g^{\ast}\del''$.
\end{itemize}
\end{itemize}
In this case, we call $\s$ an $\E$-{\it triangulation of }$\C$, and call the triplet $(\C,\E,\s)$ an {\it externally triangulated category}, or for short, {\it extriangulated category} $\C$.
\end{definition}

For an extriangulated category $\C$, we use the following notation:

\begin{itemize}
\item A sequence $\xymatrix@C=0.41cm{A\ar[r]^{x} & B \ar[r]^{y} & C}$ is called a {\it conflation} if it realizes some $\E$-extension $\del\in\E(C,A)$.
\item A morphism $f\in\C(A,B)$ is called an {\it inflation} if it admits some conflation $\xymatrix@C=0.7cm{A\ar[r]^{f} & B \ar[r]& C}.$
\item A morphism $f\in\C(A,B)$ is called a {\it deflation} if it admits some conflation $\xymatrix@C=0.7cm{K\ar[r]& A \ar[r]^f& B}.$
\item If a conflation $\xymatrix@C=0.6cm{A\ar[r]^{x} & B \ar[r]^{y} & C}$ realizes $\del\in\E(C,A)$, we call the pair $(\xymatrix@C=0.41cm{A\ar[r]^{x} & B \ar[r]^{y} & C},\del)$ an {\it $\E$-triangle}, and write it in the following way.
$$\xymatrix{A\ar[r]^{x} & B \ar[r]^{y} & C\ar@{-->}[r]^{\del}&}$$
\item Let $\xymatrix{A\ar[r]^{x}&B\ar[r]^{y}&C\ar@{-->}[r]^{\delta}&}$ and $\xymatrix{A'\ar[r]^{x'}&B'\ar[r]^{y'}&C'\ar@{-->}[r]^{\delta'}&}$ be any pair of $\E$-triangles. If a triplet $(a,b,c)$ realizes $(a,c)\colon\del\to\del'$ as in $(\ref{t10})$, then we write it as
    $$\xymatrix{
A \ar[r]^x \ar[d]_{a} & B\ar[r]^y \ar[d]^{b} & C\ar@{-->}[r]^{\delta} \ar[d]^{c}&\\
A'\ar[r]^{x'} & B' \ar[r]^{y'} & C' \ar@{-->}[r]^{\delta'}&}$$
and call $(a,b,c)$ a {\it morphism of $\E$-triangles}.
\end{itemize}

\begin{example}\label{a4}
(1) Exact category $\cal B$ can be viewed as an extriangulated category. For the definition and basic properties of an exact category, see \cite{bu}. In fact, a biadditive functor $\E:=\Ext^1_{\cal B}\colon
\cal B\op\times\cal B\to \Ab$.  Let $A,C\in\cal B$ be any pair of objects. Define $\Ext^1_{\cal B}(C,A)$ to be the collection of all equivalence classes of short exact sequences of the form $\xymatrix@C=0.7cm{A\ar[r]^{x} & B \ar[r]^{y} & C}$. We denote the equivalence class by $[\xymatrix@C=0.7cm{A\ar[r]^{x} & B \ar[r]^{y} & C}]$ as before.
For any $\del=[\xymatrix@C=0.7cm{A\ar[r]^{x} & B \ar[r]^{y} & C}]\in\Ext^1_{\cal B}(C,A)$,
define the realization $\s(\del)$ of $[\xymatrix@C=0.7cm{A\ar[r]^{x} & B \ar[r]^{y} & C}]$ to be $\del$ itself.  For more details, see \cite[Example 2.13]{np}.
\vspace{2mm}

(2) Let $\C$ be an triangulated category with shift functor $[1]$.
Put $\E:=\C(-,-[1])$. For any $\delta\in\E(C,A)=\C(C,A[1])$, take a triangle
$$\xymatrix@C=0.7cm{A\ar[r]^{x} & B \ar[r]^{y} & C\ar[r]^{\del}&A[1]}$$
and define as $\s(\del)=[\xymatrix@C=0.7cm{A\ar[r]^{x} & B \ar[r]^{y} & C}]$. Then $(\C,\E,\s)$
is an extriangulated category. It is easy to see that extension closed subcategories of triangulated categories are also
extriangulated categories. For more details, see \cite[Proposition 3.22]{np}.
\vspace{2mm}

(3) Let $\C$ be an extriangulated category, and $\cal J$ a subcategory of $\C$.
If $\cal J\subseteq\proj(\C)\cap\inj(\C)$, where $\proj(\C)$ is the full category of projective objects in $\C$ and $\inj(\C)$ is the full category of injective objects in $\C$, then $\C/\cal J$ is an extriangulated category.
This construction gives extriangulated categories which are not exact nor triangulated in general.
For more details, see \cite[Proposition 3.30]{np}.
\end{example}

We recall some concepts from \cite{np}. Let $\C$ be an extriangulated category.
\begin{itemize}
\item An object $P\in\C$ is called {\it projective} if
for any $\E$-triangle $\xymatrix{A\ar[r]^{x}&B\ar[r]^{y}&C\ar@{-->}[r]^{\delta}&}$ and any morphism $c\in\C(P,C)$, there exists $b\in\C(P,B)$ satisfying $yb=c$.
We denote the full subcategory of projective objects in $\C$ by {\red $\proj(\C)$}. Dually, the full subcategory of injective objects in $\C$ is denoted by {\red $\inj(\C)$}.

\item We say $\C$ {\it has enough projectives}, if
for any object $C\in\C$, there exists an $\E$-triangle
$$\xymatrix{A\ar[r]^{x}&P\ar[r]^{y}&C\ar@{-->}[r]^{\delta}&}$$
satisfying $P\in\cal P$.  We can define the notion of having enough injectives dually.

\item $\C$ is said to be {\it Frobenius} if $\C$ has enough projectives and enough injectives
and if moreover the projectives coincide with the injectives. In this
case one has the quotient category $\overline{\C}$ of $\C$ by injectives, which is a triangulated category by \cite{np}. We refer to this category as the \emph{stable category} of $\C$.
\end{itemize}

\begin{remark}\label{a5}
\begin{itemize}
\item[(1)] If $(\C, \E, \s)$ is an exact category, then enough projectives and enough injectives agree with the
usual definitions.
\item[(2)] If $(\C, \E, \s)$ is a triangulated category, then
$\proj(\C)$ and $\inj(\C)$ consist of zero objects. Moreover it is Frobenius as an extriangulated category.
\end{itemize}
\end{remark}

\begin{example}\label{cc2}{\cite[Corollary 4.12]{zhz}}
Let $\C$ be a triangulated category with Auslander-Reiten translation $\tau$, and
$\X$ a functorially finite subcategory of $\C$, which satisfies $\tau\X=\X$.
 For any $A,C\in\C$, define {\red $\E'(C,A)\subseteq\C(C,A[1])$} to be the
collection of all equivalence classes of triangles of the form
$\xymatrix@C=0.7cm{A\ar[r]^f&B\ar[r]^{g}&C\ar[r]^{\del\;}&A[1]}$, where $f$ is $\X$-monic.
$\s'(\delta)=[A\xrightarrow{~f~}B\xrightarrow{~g~}C]$, for any $\del\in\E'(C,A)$.  Then $(\C,\E',\s')$ is a Frobenius extriangulated category whose projective-injective objects are precisely $\X$.
\end{example}

\subsection{Mutations in extriangulated categories}
We recall the notion of mutation pairs in extriangulated categories from \cite[Definition 3.2]{zhz}.

\begin{definition}
Let $\X, \cal A$ and $\cal B$ be subcategories of an extriangulated category $\C$, and $\X\subseteq\cal A$ and $\X\subseteq\cal B$. The pair $(\cal A,\cal B)$ is called an $\X$-\emph{mutation pair }if it satisfies
\begin{itemize}
\item[$(1)$] For any $A\in\cal A$, there exists an  $\E$-triangle
$$\xymatrix{A\ar[r]^{\alpha}&X\ar[r]^{\beta}&B\ar@{-->}[r]^{\delta}&}$$
where $B\in\cal B$, $\alpha$ is a left $\X$-approximation of $A$ and $\beta$ is a right $\X$-approximation of $B$.

\item[$(2)$] For any $B\in\cal B$, there exists an $\E$-triangle
$$\xymatrix{A\ar[r]^{\alpha}&X\ar[r]^{\beta}&B\ar@{-->}[r]^{\delta}&}$$
where $A\in\cal A$, $\alpha$ is a left $\X$-approximation of $A$ and $\beta$ is a right $\X$-approximation of $B$.
\end{itemize}
Note that if $\C$ is a triangulated category, $\X$-mutation pair is just the same as Liu-Zhu's definition
\cite[Definition 2.6]{lz}.
In addition, if $\X$ is a rigid subcategory of $\C$, i.e., $\Ext_{\C}^1(\X,\X)=0$, then $\X$-mutation pair is just the same as Iyama-Yoshino's definition \cite[Definition 2.5]{iy}.
\end{definition}

Let $\X\subseteq\cal A$ be subcategories of a category $\C$. We denote by $\cal A/\X$
the category whose objects are objects of $\cal A$ and whose morphisms are elements of
$\Hom_{\cal A}(A,B)/\X(A,B)$ for any $A,B\in\cal A$. Such category is called the quotient category
of $\cal A$ by $\X$. For any morphism $f\colon A\to B$ in $\cal A$, we denote by $\overline{f}$ the image of $f$ under the natural quotient functor $\cal A\to \cal A/\X$.

\begin{theorem}{\emph{\cite[Theorem 3.15]{zhz}}}\label{z1}
Let $\C$ be an extriangulated category and let $\X\subseteq\cal A$ be {\red two additive subcategories}
of $\C$. We assume that the following two conditions concerning $\cal A$ and $\X$:
\begin{itemize}
\item[\emph{(1)}] $\cal A$ is extension closed,

\item[\emph{(2)}] $(\cal A,\cal A)$ forms an $\X$-mutation pair.
\end{itemize}
We put
$\cal M:=\cal A /\X.$ Then the category $\cal M$ has the structure of a triangulated category
with respect to the following shift functor and triangles:
\begin{itemize}
\item[\emph{(I)}] For $A\in\cal A$, we take an $\E$-triangle
$$\xymatrix{A\ar[r]^{a}&X\ar[r]^{b\quad}&A\langle1\rangle\ar@{-->}[r]&,}$$
where $a$ is a left $\X$-approximation and $b$ is a right $\X$-approximation.
Then $\langle1\rangle$ gives a well-defined auto-equivalence of $\cal M$,
which is the \textbf{shift functor} of $\cal M$.
\item[\emph{(II)}] For an $\E$-triangle $\xymatrix{A\ar[r]^{f}&B\ar[r]^{g}&C\ar@{-->}[r]&,}$
with $A,B,C\in\cal A$ and $f$ is $\X$-monic, there exists the following commutative diagram of $\E$-triangles:
$$
\xymatrix{
A\ar[r]^f\ar@{=}[d]&B\ar[r]^g\ar[d]&C\ar@{-->}[r]\ar[d]^h&\\
A\ar[r]^{a}&X\ar[r]^{b\quad}&A\langle1\rangle\ar@{-->}[r]&}$$
Then we have a complex $A\xrightarrow{~\overline{f}~}B\xrightarrow{~\overline{g}~}C
\xrightarrow{~\overline{h}~}A\langle1\rangle$. We define \textbf{triangles} in
$\cal M$ as the complexes which are isomorphic to a complex obtained in this way.
\end{itemize}
\end{theorem}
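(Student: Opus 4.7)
The plan is to adapt the Iyama--Yoshino construction of a triangulated structure on a subfactor category to the extriangulated setting by systematically replacing appeals to the triangulated axioms with the extriangulated axioms (ET1)--(ET4) together with their duals. The proof naturally splits into three stages: building the shift functor, producing enough triangles, and verifying the four axioms of a triangulated category on $\cal M$.

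First I would show that $\langle 1\rangle$ is a well-defined auto-equivalence of $\cal M$. For $A\in\cal A$, hypothesis (2) provides an $\E$-triangle $A\xrightarrow{a}X\xrightarrow{b}A\langle 1\rangle$ with $a$ a left $\X$-approximation, $b$ a right $\X$-approximation, and $A\langle 1\rangle\in\cal A$. Given $f\colon A\to A'$ in $\cal A$, the approximation property of $a'$ factors $a'f$ through $a$, and (ET3) lifts this to a morphism $f\langle 1\rangle\colon A\langle 1\rangle\to A'\langle 1\rangle$. A standard chase shows that any two such lifts differ by a morphism factoring through $X\in\X$, so $\overline{f\langle 1\rangle}$ depends only on $\overline f$ and is functorial in $\cal M$; independence of $A\langle 1\rangle$ from the choice of defining $\E$-triangle, up to canonical isomorphism in $\cal M$, is checked by the same method. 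The dual half of hypothesis (2), together with (ET3)op, produces the quasi-inverse $\langle -1\rangle$.

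Next I would establish that definition (II) assigns to each $\E$-triangle $A\to B\to C$ with $\X$-monic $f$ a well-defined triangle in $\cal M$, and that every morphism of $\cal M$ lies in such a triangle. For the first point, since $f$ is $\X$-monic the approximation $a$ factors through $f$, and (ET3) produces the commutative square whose right-most vertical is the connecting map $h\colon C\to A\langle 1\rangle$; uniqueness of $\overline h$ is another diagram chase using that ambiguities in the lift factor through $\X$. For the second point, I would replace an arbitrary $f\colon A\to B$ by $\binom{f}{a}\colon A\to B\oplus X$, which is $\X$-monic because $a$ is; this morphism is therefore an inflation and fits into an $\E$-triangle. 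Since the projection $B\oplus X\to B$ is an isomorphism in $\cal M$, the resulting triangle has $\overline f$ as its first morphism.

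The principal obstacle will be verifying the octahedral axiom (TR4), which requires gluing three triangles of $\cal M$ into a compatible $3\times 3$ diagram whose fourth face is itself a triangle. Here (ET4) supplies a $3\times 3$ diagram of $\E$-triangles for a composable pair of $\X$-monic inflations, but one must additionally verify that the induced horizontal morphism $d\colon D\to E$ is $\X$-monic and that, after splicing against the defining $\E$-triangle of $A\langle 1\rangle$, the resulting connecting map agrees modulo $\X$ with the one prescribed by (II). These compatibilities are encoded in clauses (i)--(iii) of (ET4) and in the approximation conditions; the proof is completed by one further application of (ET4) together with (ET4)op and a careful diagram chase. The rotation axiom (TR2) is handled in the same spirit using (ET4)op to rebuild the shifted triangle; (TR3) reduces to (ET3) applied twice, since morphisms factoring through $\X$ vanish in $\cal M$ and so promote the ``weakly commutative'' lifts supplied by the extriangulated axioms to strictly commutative diagrams in the quotient.
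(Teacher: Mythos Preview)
The paper under review does not prove this theorem at all: Theorem~\ref{z1} is quoted verbatim from \cite[Theorem 3.15]{zhz} and is used as a black box throughout. There is therefore no ``paper's own proof'' to compare against.

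Your outline is a faithful sketch of the strategy carried out in the cited reference \cite{zhz}: adapt the Iyama--Yoshino subfactor construction by replacing triangulated axioms with (ET1)--(ET4) and their duals, build $\langle 1\rangle$ from the $\X$-mutation data, manufacture triangles from $\X$-monic inflations, and derive (TR4) from (ET4). One small point to tighten: when you replace an arbitrary $f\colon A\to B$ by $\binom{f}{a}\colon A\to B\oplus X$ and assert ``this morphism is therefore an inflation'', being $\X$-monic is not by itself enough in an extriangulated category; you need that $a$ is already an inflation (it sits in the defining $\E$-triangle of $A\langle 1\rangle$) and then invoke the fact, proved in \cite{np} (see e.g.\ Corollary 3.16 there), that a morphism of the form $\binom{f}{a}$ with $a$ an inflation is again an inflation. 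With that adjustment your plan matches the argument in \cite{zhz}.
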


\begin{lemma}\label{z4}
Assume that $(\cal U,\cal V)$ is {\red a} $\Y$-mutation pair in $\C$
such that $\cal U\vee \cal V\subseteq\cal A$, where $\cal U\vee \cal V$ is the smallest subcategory of $\C$ containing $\cal U$
and $\cal V$, and $\X\subseteq\Y$. Then $(\overline{\cal U},\overline{\cal V})$ is {\red a} $\overline{\Y}$-mutation pair in $\cal M$.
\end{lemma}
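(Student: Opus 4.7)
My strategy is to take each of the two defining $\E$-triangles of the $\Y$-mutation pair $(\cal U,\cal V)$, pass to the quotient $\cal M=\cal A/\X$ by invoking Theorem \ref{z1}, and then transfer the approximation properties along the quotient functor. Specifically, for $U\in\cal U$ I take the $\E$-triangle
$$\xymatrix{U\ar[r]^{\alpha}&Y\ar[r]^{\beta}&V\ar@{-->}[r]&}$$
with $Y\in\Y$, $\alpha$ a left $\Y$-approximation of $U$ and $\beta$ a right $\Y$-approximation of $V$, and I dually start from the second $\E$-triangle in the $\Y$-mutation for each $V\in\cal V$.

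Next I would check that Theorem \ref{z1} applies to these $\E$-triangles. Since $\cal U\vee\cal V\subseteq\cal A$ and $\cal A$ is extension closed (by the standing hypothesis of Theorem \ref{z1}), the middle term $Y$ automatically lies in $\cal A$, so all three terms of the $\E$-triangle sit in $\cal A$. The remaining hypothesis of Theorem \ref{z1}(II) is that $\alpha$ be $\X$-monic; but this is immediate from $\X\subseteq\Y$, because any morphism $U\to X'$ with $X'\in\X$ is in particular a morphism into $\Y$, hence factors through the left $\Y$-approximation $\alpha$. Therefore Theorem \ref{z1}(II) yields a triangle
$$\xymatrix@C=0.7cm{\overline{U}\ar[r]^{\overline{\alpha}}&\overline{Y}\ar[r]^{\overline{\beta}}&\overline{V}\ar[r]&\overline{U}\langle 1\rangle}$$
in $\cal M$, with $\overline{Y}\in\overline{\Y}$.

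Finally I would verify the approximation conditions in $\cal M$ by a lift-and-descend argument. To see that $\overline{\alpha}$ is a left $\overline{\Y}$-approximation of $\overline{U}$, take any $\overline{f}\colon\overline{U}\to\overline{Y'}$ in $\cal M$ with $Y'\in\Y$, lift to a representative $f\colon U\to Y'$ in $\C$, use that $\alpha$ is a left $\Y$-approximation to write $f=g\alpha$ for some $g\colon Y\to Y'$, and project back to obtain $\overline{f}=\overline{g}\,\overline{\alpha}$. A symmetric argument shows $\overline{\beta}$ is a right $\overline{\Y}$-approximation of $\overline{V}$, and running the same reasoning on the second defining $\E$-triangle (for $V\in\cal V$) establishes condition (2) in the definition of $\overline{\Y}$-mutation pair. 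I expect the only genuinely delicate point to be the $\X$-monicity of $\alpha$, which underwrites the use of Theorem \ref{z1}; once this is in hand, the rest of the argument is a mechanical lift-and-quotient verification with no further obstacle.
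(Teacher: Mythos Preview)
Your proposal is correct and follows essentially the same approach as the paper's proof. You supply more detail than the paper does---in particular, you explicitly verify that $\alpha$ is $\X$-monic (needed to invoke Theorem \ref{z1}(II), but left implicit in the paper) and spell out the lift-and-descend argument for the approximation properties, which the paper dismisses with ``It is easy to see.''
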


\proof For any $U\in\overline{\cal U}$, there exists an {\red $\E$-triangle}
$$\xymatrix{U\ar[r]^{f}&Y\ar[r]^{g}&V\ar@{-->}[r]&,}$$
where $f$ is a left $\Y$-approximation, $g$ is a right $\Y$-approximation and $V\in\cal V$.
Thus we have {\red the} following commutative diagram:
$$\xymatrix{U\ar[r]^{f}\ar@{=}[d]&Y\ar[r]^{g}\ar[d]^{y}&V\ar@{-->}[r]\ar[d]^{h}&\\
U\ar[r]^{a}&X\ar[r]^{b\quad}&U\langle1\rangle\ar@{-->}[r]&.}$$
It follows that
$$\xymatrix{U\ar[r]^{\overline{f}}&Y\ar[r]^{\overline{g}}&V\ar[r]^{\overline{h}\quad}&U\langle1\rangle}$$
is a triangle in $\cal M$.
It is easy to see that $\overline{f}$ is a left $\overline{\Y}$-approximation of $U$, $\overline{g}$ is a right $\overline{\Y}$-approximation of $V$ and $V\in\overline{\cal V}$.

Dually, we can show that for any $V'\in\overline{\cal V}$, there exists a triangle
$$\xymatrix{U'\ar[r]^{\overline{f'}}&Y'\ar[r]^{\overline{g'}}&V'\ar[r]^{\overline{h'}\quad}&U'\langle1\rangle}$$
in $\cal M$, where $\overline{f'}$ is a left $\overline{\Y}$-approximation of $U'$, $\overline{g'}$ is a right $\overline{\Y}$-approximation of $V'$ and $U'\in\overline{\cal U}$.
This shows that $(\overline{\cal U},\overline{\cal V})$ is {\red a} $\overline{\Y}$-mutation pair in $\cal M$. \qed
\vspace{4mm}

\subsection{$2$-Calabi-Yau extriangulated categories}

Motivated by the {\red definitions} of $2$-Calabi-Yau triangulated categories and exact stably $2$-Calabi-Yau categories, we define $2$-Calabi-Yau extriangulated categories.
\begin{definition}\label{y1}
Let $\C$ be a $k$-linear {\red Hom-finite and Ext-finite} extriangulated category over a field $k$. $\C$ is called $2$-\emph{Calabi-Yau} if there exists a bifunctorial isomorphism
$$\E(A,B)\simeq \mathsf{D}\E(B,A),$$
for any $A,B\in\C$, where $\mathsf{D}=\Hom_{k}(-,k)$ is the usual $k$-duality.
\end{definition}

We give some examples for $2$-Calabi-Yau extriangulated {\red categories}.

\begin{example}\label{y2}
~~
\begin{itemize}
\item Any exact stably $2$-Calabi-Yau category is a $2$-Calabi-Yau extriangulated category.
\item Any $2$-Calabi-Yau triangulated category is a $2$-Calabi-Yau extriangulated category.
\item Let $\C_Q$ be the cluster category of the path algebra $kQ$, where $Q$ is the quiver
$1\xrightarrow{~~}2\xrightarrow{~~}3$. We have the following
AR-quiver for $\C_Q$, where $S_i$ and $P_i$ denote the simple and projective modules
associated with vertex $i$ respectively.
$$\xymatrix@C0.5cm{
&&P_1\ar[dr]&& S_3[1]\ar[dr]&&S_3\ar[dr]&&\\
&P_2\ar[ur]\ar[dr]&& P_1/S_3\ar[ur]\ar[dr]&& P_2[1]\ar[ur]\ar[dr]&& P_2\ar[dr]& \\
S_3\ar[ur]&& S_2\ar[ur]&& S_1 \ar[ur]&& P_1[1]\ar[ur] && P_1
}$$

Then $\cal B=\mathsf{mod}kQ$  is an extension closed subcategory of $\C_Q$. {\red
By \cite[Remark 2.18]{np}, $\cal B$ is an extriangulated category. It is easy to see $P_1$ is a projective (also injective) object, and $\cal B$ has enough projectives and enough injectives. Since $\C_Q$ is a $2$-Calabi-Yau triangulated category, }  then $\cal B$ is a $2$-Calabi-Yau extriangulated category.
\end{itemize}
\end{example}

\textbf{From now on to the end of the article, we always suppose that extriangulated category $\C$ has enough projectives
and enough injectives.}

\section{Cotorsion pairs in extriangulated categories }
\setcounter{equation}{0}

We recall the definition of {\red a} cotorsion pair in an {\red extriangulated} category from \cite{np}.

\begin{definition}\label{e1}{\cite[Definition 4.1]{np}}
Let $(\X,\Y)$ be subcategories of an extriangulated {\red category} $\C$. The pair $(\X,\Y)$ is called a {\it cotorsion pair} on $\C$ if it satisfies the following conditions.
\begin{enumerate}
\item[(1)] $\E(\X,\Y)=0$;
\item[(2)] For any $C\in\C$, there exists an $\E$-triangle
$$\xymatrix{Y^C\ar[r]^{f}&X^C\ar[r]^{g}&C\ar@{-->}[r]^{\del}&}$$
satisfying $X^C\in\X,Y^C\in\Y$.
\item[(3)] For any $C\in\C$, there exists an $\E$-triangle
$$\xymatrix{C\ar[r]^{f}&Y_C\ar[r]^{g}&X_C\ar@{-->}[r]^{\eta}&}$$
satisfying $X_C\in\X,Y_C\in\Y$.
\end{enumerate}
{\red Let $(\X,\Y)$ be a cotorsion pair,} we call $I(\X)=\X\cap\Y$ the \emph{core} of the cotorsion pair $(\X,\Y)$, which is usually denoted by $I$. Note that any projective-injective object belongs to the core $I$, thus the subcategory $\pj$ which consists of all projective-injective objects is contained in $I$. Moreover, if $I=\pj$, then we call $(\X,\Y)$ a \emph{t-structure}.
\end{definition}

Note that if $\C$ is a triangulated category, cotorsion pair is just the same as Iyama-Yoshino's definition \cite[Definition 2.2]{iy} or Nakaoka's definition \cite[Definition 2.1]{na}. If $\C$ is an exact category,  cotorsion pair is just the same as Liu's definition \cite[Definition 2.3]{l}.

\begin{remark}\label{rem-cotorsion}
Let $(\cal U,\cal V)$ be a cotorsion pair on an extriangulated category $\C$. Then
\begin{enumerate}
\item[(1)] $C\in\cal U$ if and only if $\E(C,\cal V)=0$;
\item[(2)] $C\in\cal V$ if and only if $\E(\cal U,C)=0$;
\item[(3)] $\cal U$ and $\cal V$ are extension closed;
\item[(4)] $\cal U$ is a contravariantly finite in $\C$ and $\cal V$ is a covariantly finite in $\C$.
\item[(5)] $\proj(\C)\subseteq\cal U$ and $\inj(\C)\subseteq\cal V$.
\end{enumerate}
\end{remark}

The following is an extriangulated version of Wakamatsu's Lemma.{\red
 This is proved in \cite[Lemma 2.1]{j1} in case $\C$ is a triangulated category, and
proved in \cite[Lemma 2.1]{j2}, when $\C$ is an $n$-angulated category. However, it can be easily extended to our setting.}

\begin{lemma}\label{d1}
Let $\X$ be an extension closed subcategory of an extriangulated category $\C$. Given an $\E$-triangle
$$\xymatrix{K\ar[r]^f&X_0\ar[r]^{g}&C\ar@{-->}[r]^{\del}&,}$$
where $g$ is a minimal right $\X$-approximation of $C$. Then $\E(X,K)=0$, for any $X\in\X$.
\end{lemma}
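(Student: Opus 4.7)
The plan is to fix $X\in\X$ and an arbitrary $\xi\in\E(X,K)$, and to show $\xi=0$ in two steps: first that $f_\ast\xi=0$ in $\E(X,X_0)$, where the minimality of $g$ enters, and then that $\xi=0$ via the long exact sequence attached to the given $\E$-triangle together with the approximation property of $g$.

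For the first step, realize $f_\ast\xi$ as an $\E$-triangle $X_0\xrightarrow{a'}Y'\xrightarrow{b'}X\dashrightarrow f_\ast\xi$. Since $X_0,X\in\X$ and $\X$ is extension closed, $Y'\in\X$. Apply (ET4) to the pair of $\E$-triangles
$$K\xrightarrow{f}X_0\xrightarrow{g}C\dashrightarrow\delta\quad\text{and}\quad X_0\xrightarrow{a'}Y'\xrightarrow{b'}X\dashrightarrow f_\ast\xi,$$
which share the object $X_0$, to obtain a $3\times 3$ diagram with a new object $E$, an $\E$-triangle $K\xrightarrow{h}Y'\xrightarrow{h'}E\dashrightarrow$, and an $\E$-triangle $C\xrightarrow{d}E\xrightarrow{e}X\dashrightarrow(gf)_\ast\xi$. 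Because $gf=0$ in any conflation, the last $\E$-triangle splits and I may choose an isomorphism $E\cong C\oplus X$ under which $d$ and $e$ become the canonical inclusion and projection. Writing $h'=\binom{h'_1}{b'}$ under this identification, the commutativity $h'a'=dg$ from the diagram forces $h'_1 a'=g$. Since $Y'\in\X$ and $g$ is a right $\X$-approximation of $C$, $h'_1$ factors as $h'_1=gs$ for some $s\colon Y'\to X_0$, giving $gsa'=g$. The minimality of $g$ then upgrades $sa'$ to an automorphism of $X_0$, so $a'$ is split monic and $f_\ast\xi=0$.

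For the second step, applying $\E(X,-)$ to the original $\E$-triangle yields an exact fragment
$$\Hom(X,X_0)\xrightarrow{g_\ast}\Hom(X,C)\xrightarrow{\partial}\E(X,K)\xrightarrow{f_\ast}\E(X,X_0)$$
with $\partial(\phi)=\phi^\ast\delta$. The vanishing of $f_\ast\xi$ gives $\xi=\phi^\ast\delta$ for some $\phi\colon X\to C$; since $X\in\X$, the approximation property writes $\phi=g\psi$ for some $\psi\colon X\to X_0$, so $\xi=\psi^\ast(g^\ast\delta)=0$, because $g^\ast\delta=\partial(g)=\partial(g_\ast(1_{X_0}))=0$ by exactness. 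The main obstacle I foresee is the book-keeping inside the (ET4)-diagram, specifically the identification of $E$ with $C\oplus X$ and the resulting decomposition of $h'$; after that, the argument is essentially formal.
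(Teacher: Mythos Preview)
Your proof is correct and follows essentially the same strategy as the paper. The only cosmetic difference is that the paper packages the key step as showing $g_\ast\colon\E(X,X_0)\to\E(X,C)$ is injective (using the pushout morphism of $\E$-triangles rather than your appeal to (ET4)) and then sandwiches $\E(X,K)$ in the long exact sequence between the zero connecting map and the zero map $f_\ast$, whereas you argue element-wise; the underlying ``split, then use the approximation and minimality of $g$'' computation is identical.
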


\proof There exists a long exact sequence
$$\C(X,X_0)\xrightarrow{\C(X,g)}\C(X,C)\xrightarrow{}\E(X,K)\xrightarrow{}\E(X,X_0)\xrightarrow{\E(X,g)}\E(X,C).$$
The first morphism in the sequence is epimorphism since $X_0\xrightarrow{g}C$ is a right $\X$-approximation, so the second morphism is zero.
We claim that the fourth map in the sequence is monomorphism whence the third map is zero. This forces $\E(X,K)=0$ as desired.

To see that the fourth map is {\red a} monomorphism, let $\eta\in\E(X,X_0)$ be any $\E$-extension, realized by
an $\E$-triangle $$\xymatrix{X_0\ar[r]^u&A\ar[r]^{v}&X\ar@{-->}[r]^{\eta}&}$$ such that
$\E(X,g)(\eta)=0$. Thus we obtain a morphism of $\E$-triangles
$$\xymatrix{X_0\ar[r]^{u}\ar[d]^g&A\ar[r]^v\ar[d]^h&X\ar@{-->}[r]^{\eta}\ar@{=}[d]&\\
C\ar[r]^{x}&B\ar[r]^{y}&X\ar@{-->}[r]^{0}&.}$$
By Corollary 3.5 in \cite{np}, there exists a morphism $x'\colon B\to C$ such that $x'x=1$.
Since $\X$ is extension closed, we have $A\in\X$. Since $g$ is a right $\X$-approximation, there exists a morphism $t\colon A\to X_0$ such that $gt=x'h$.
It follows that $g=x'xg=x'hu=gtu$. Since $g$ is right minimal, we have that $tu$ is an isomorphism.
In particular, $u$ is a section. By Corollary 3.5 in \cite{np}, $\eta$ splits and then $\eta=0$,  as desired.  \qed
\vspace{3mm}

We introduce two notations. Let $\X$ be a subcategory of an extriangulated category $\C$. We set
$\X^{\perp_{\E}}=\{M\in\C\mid \E(\X,M)=0\}$ and $^{\perp_{\E}}\X=\{M\in\C\mid \E(M,\X)=0\}$.
Remark that $\X^{\perp_{\E}}$ and $^{\perp_{\E}}\X$ are extension closed.

\begin{proposition}
Let $\X$ be a contravariantly finite extension closed subcategory of an extriangulated category $\C$ such that
$\proj(\C)\subseteq\X$. Then $(\X,\X^{\perp_{\E}})$ is a cotorsion pair in $\C$.
\end{proposition}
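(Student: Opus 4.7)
The plan is to verify the three defining conditions of a cotorsion pair for $(\X, \X^{\perp_{\E}})$. Condition~(1), $\E(\X, \X^{\perp_{\E}}) = 0$, holds by the very definition of $\X^{\perp_{\E}}$, so the real work is in producing the two approximation $\E$-triangles required by (2) and (3).

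For condition~(2), fix $C\in\C$. Since $\X$ is contravariantly finite and $\C$ is Krull-Schmidt, there is a minimal right $\X$-approximation $g\colon X^C\to C$. To upgrade $g$ to a deflation, I would use enough projectives to pick a deflation $p\colon P\to C$ with $P\in\proj(\C)\subseteq\X$; the approximation property then yields $h\colon P\to X^C$ with $p=g\circ h$. Since a composite that is a deflation forces its outer factor to be a deflation in any extriangulated category, $g$ itself is a deflation, so there is an $\E$-triangle
$$Y^C\to X^C\xrightarrow{~g~} C\dashrightarrow.$$
Wakamatsu's Lemma (Lemma~\ref{d1}) then gives $\E(\X,Y^C)=0$, that is, $Y^C\in\X^{\perp_{\E}}$, as required.

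Condition~(3) I would derive from condition~(2) using enough injectives. Given $C\in\C$, pick an $\E$-triangle $C\to I\to Z\dashrightarrow$ with $I\in\inj(\C)$, and apply condition~(2) to $Z$ to obtain an $\E$-triangle $Y\to X\to Z\dashrightarrow$ with $X\in\X$ and $Y\in\X^{\perp_{\E}}$. Pulling back the second $\E$-triangle along the deflation $I\to Z$ (equivalently, the standard $3\times 3$-diagram / pullback construction in the extriangulated setting) produces an object $F$ sitting in two $\E$-triangles
$$C\to F\to X\dashrightarrow \qquad\text{and}\qquad Y\to F\to I\dashrightarrow.$$
Injectivity of $I$ forces $\E(\X,I)=0$, so $I\in\X^{\perp_{\E}}$; combined with $Y\in\X^{\perp_{\E}}$ and the extension-closedness of $\X^{\perp_{\E}}$, the second $\E$-triangle forces $F\in\X^{\perp_{\E}}$, and then the first $\E$-triangle is the one we need.

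The main obstacle is the deflation upgrade in step~(2): we only have the projective hypothesis $\proj(\C)\subseteq\X$ and not its injective analogue, so the argument must be asymmetric, exploiting a projective deflation together with the cancellation property of deflations ($gh$ a deflation $\Rightarrow$ $g$ a deflation) to ensure that the \emph{minimal} approximation $g$, and not merely an enlargement like $(g,p)\colon X^C\oplus P\to C$, fits into an $\E$-triangle; minimality is indispensable, since it is precisely what lets Wakamatsu's Lemma identify the kernel with an object of $\X^{\perp_{\E}}$.
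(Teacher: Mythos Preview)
Your proof is correct and follows essentially the same route as the paper's own argument: for condition~(2) the paper, like you, uses the minimal right $\X$-approximation (invoking $\proj(\C)\subseteq\X$ to ensure it is a deflation) together with Wakamatsu's Lemma, and for condition~(3) it performs exactly the same construction---embed $C$ into an injective, approximate the quotient, and build the required $\E$-triangle via the $3\times3$ diagram of \cite[Proposition~3.15]{np}, concluding by extension-closedness of $\X^{\perp_{\E}}$. The only difference is that the paper leaves the deflation upgrade implicit, whereas you spell out the cancellation step (``$gh$ a deflation $\Rightarrow$ $g$ a deflation''), which is indeed available in extriangulated categories.
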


\proof For any object $C\in\C$, there exists an $\E$-triangle
$$\xymatrix{C\ar[r]^u&I_0\ar[r]^{v}&L\ar@{-->}[r]&,}$$
where $I_0\in\cal I$. Since $\X$ is contravariantly finite and $\proj(\C)\subseteq\X$, we can take two
$\E$-triangles
$$\xymatrix{M\ar[r]^x&X_1\ar[r]^{y}&L\ar@{-->}[r]&} \ \ \textrm{and} \ \ \xymatrix{K\ar[r]^f&X_2\ar[r]^{g}&C\ar@{-->}[r]&,}$$
where $y$ (resp.\,$g$) is a minimal right $\X$-approximation of $L$ (resp.\,$C$).
Since $\X$ is extension closed, by Lemma \ref{d1}, we obtain $M\in\X^{\perp_{\E}}$ (resp.\,$K\in\X^{\perp_{\E}}$).
By Proposition 3.15 in \cite{np}, we obtain a commutative diagram
$$\xymatrix{&M\ar@{=}[r]\ar[d]&M\ar[d]^{x}\\
C\ar[r]\ar@{=}[d]&N\ar[r]\ar[d]&X_1\ar@{-->}[r]\ar[d]^{y}&\\
C\ar[r]^{u}&I_0\ar[r]^{v}\ar@{-->}[d]&L\ar@{-->}[r]\ar@{-->}[d]&\\
&&}$$
of $\E$-triangles.
Since $I_0, M\in\X^{\perp_{\E}}$ and $\X^{\perp_{\E}}$ is extension closed, we have $N\in\X^{\perp_{\E}}$.
Thus for any object $C\in\C$, there exists two $\E$-triangles
$$\xymatrix{K\ar[r]&X_2\ar[r]&C\ar@{-->}[r]&}\ \ \textrm{and} \ \ \xymatrix{C\ar[r]&N\ar[r]&X_1\ar@{-->}[r]&}$$
satisfying $K,N\in\X^{\perp_{\E}}$ and $X_2, X_1\in\X$. Hence $(\X,\X^{\perp_{\E}})$ is a cotorsion pair.\qed
\vspace{1mm}

We omit the dual statement.

\begin{definition}
Let $\X$ be a subcategory of an extriangulated category $\C$.
$\X$ is called \emph{rigid} if $\E(\X,\X)=0$, i.e., $\E(A,B)=0$, for any $A,B\in\X$.
An object $X$ is called\emph{ rigid} if $\add X$ is rigid.

\end{definition}

\begin{lemma}\label{d2}
Let $\X$ be a functorially finite rigid subcategory of an extriangulated category $\C$ such that
$\X^{\perp_{\E}}={^{\perp_{\E}}\X}$ and $\proj(\C),\inj(\C)\subseteq\X$. Then   $(\X^{\perp_{\E}},\X^{\perp_{\E}})$ is an $\X$-mutation pair.
\end{lemma}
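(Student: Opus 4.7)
The plan is to construct, for any $A \in \X^{\perp_{\E}}$, an $\E$-triangle $A \xrightarrow{\alpha} X \xrightarrow{\beta} B \dashrightarrow$ with $X \in \X$, $B \in \X^{\perp_{\E}}$, such that $\alpha$ is a left $\X$-approximation and $\beta$ is a right $\X$-approximation. The condition starting from $B \in \X^{\perp_{\E}}$ is then handled dually. The inclusion $\X \subseteq \X^{\perp_{\E}}$ is immediate from rigidity, so only the two mutation conditions need to be verified.

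For the first, given $A \in \X^{\perp_{\E}}$: since $\inj(\C) \subseteq \X$ and $\C$ has enough injectives, I fix an inflation $a \colon A \to I$ with $I \in \inj(\C)$, producing an $\E$-triangle $A \xrightarrow{a} I \to C \dashrightarrow$. Covariant finiteness of $\X$ supplies a left $\X$-approximation $\alpha_1 \colon A \to X_1$. Taking the push-out of $a$ along $\alpha_1$ and applying the push-out construction of \cite[Proposition 3.15]{np} yields an $\E$-triangle
\begin{equation*}
A \xrightarrow{\binom{\alpha_1}{-a}} X_1 \oplus I \xrightarrow{\beta} P \dashrightarrow
\end{equation*}
with $X := X_1 \oplus I \in \X$ and candidate $B := P$. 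That $\alpha := \binom{\alpha_1}{-a}$ is a left $\X$-approximation is trivial (any $h \colon A \to X'$ with $X' \in \X$ factors through $\alpha_1$, say $h = h'\alpha_1$, and then $h = (h', 0) \circ \alpha$); that $\beta$ is a right $\X$-approximation follows by applying $\Hom(X', -)$ to the $\E$-triangle and using $\E(X', A) = 0$.

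The genuinely non-trivial step, which I expect to be the main obstacle, is showing $P \in \X^{\perp_{\E}}$, and here the hypothesis $\X^{\perp_{\E}} = {}^{\perp_{\E}}\X$ is crucial. It suffices to prove $\E(P, X') = 0$ for all $X' \in \X$. Applying $\Hom(-, X')$ to the $\E$-triangle, rigidity of $\X$ gives $\E(X_1 \oplus I, X') = 0$, while the symmetry hypothesis together with $A \in \X^{\perp_{\E}}$ gives $\E(A, X') = 0$. The long exact sequence collapses to
\begin{equation*}
\Hom(X_1 \oplus I, X') \longrightarrow \Hom(A, X') \longrightarrow \E(P, X') \longrightarrow 0,
\end{equation*}
and surjectivity of the leftmost map — equivalently, $\E(P, X') = 0$ — is a direct restatement of the left $\X$-approximation property of $\alpha_1$.

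The mutation condition for $B \in \X^{\perp_{\E}}$ is handled dually: using $\proj(\C) \subseteq \X$ and enough projectives, I fix a deflation $p \colon P_0 \to B$ with $P_0 \in \proj(\C)$ and, using contravariant finiteness, a right $\X$-approximation $\beta_1 \colon X_1 \to B$; forming the pull-back and invoking the dual of \cite[Proposition 3.15]{np} produces an $\E$-triangle $L \to X_1 \oplus P_0 \xrightarrow{(\beta_1,\, p)} B \dashrightarrow$. The four conditions are checked verbatim as above, with $L \in \X^{\perp_{\E}}$ again being the only subtle point: it reduces via the same symmetry hypothesis to $\E(X', L) = 0$, which follows from the right $\X$-approximation property of $\beta_1$.
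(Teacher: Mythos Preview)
Your proof is correct, but it takes a different route from the paper's. The paper begins, for $M\in\X^{\perp_\E}$, by choosing a \emph{minimal} right $\X$-approximation $g\colon X_0\to M$; since $\proj(\C)\subseteq\X$ and $\C$ has enough projectives, $g$ is a deflation, giving an $\E$-triangle $K\to X_0\xrightarrow{g}M\dashrightarrow$. Wakamatsu's Lemma (Lemma~\ref{d1}) then forces $\E(\X,K)=0$, i.e.\ $K\in\X^{\perp_\E}$, and the remaining approximation property of $f\colon K\to X_0$ follows from $\E(M,\X)=0$ via the symmetry hypothesis. The other mutation condition is dual. So the paper's argument is essentially a one-line application of Wakamatsu, whereas you bypass Wakamatsu entirely by building the $\E$-triangle from scratch via the pushout of an injective inflation and a left approximation. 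Your approach has the mild advantage of not needing minimality of the approximation (so no appeal to Krull--Schmidt at this point), at the cost of a slightly longer setup; the paper's approach is shorter but leans on the earlier lemma. Both use the hypothesis $\X^{\perp_\E}={}^{\perp_\E}\X$ at the same decisive moment, namely to pass from vanishing of $\E$ on one side to the other for the third term of the triangle. One minor remark: your observation that $\E(A,X')=0$ is correct but superfluous---the sequence already terminates at $\E(X_1\oplus I,X')=0$ by rigidity, so that fact plays no role.
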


\proof
For any $M\in\X^{\perp_{\E}}$, there exists an $\E$-triangle
$$\xymatrix{K\ar[r]^f&X_0\ar[r]^{g}&M\ar@{-->}[r]^{\del}&,}$$
where $g$ is a minimal right $\X$-approximation of $M$.
By Lemma \ref{d1}, we have $\E(\X,K)=0$. Namely $K\in\X^{\perp_{\E}}={^{\perp_{\E}}\X}$ and
then $\E(K,\X)=0$.
Applying the functor $\C(-,\X)$ to the above $\E$-triangle, we obtain an exact sequence
$$\C(X_0,\X)\xrightarrow{\C(f,\X)}\C(K,\X)\xrightarrow{}\E(M,\X)=0.$$
This shows that $f$ is  a left $\X$-approximation of $K$.
Dually, we can show that $N\in\X^{\perp_{\E}}$, there exists an $\E$-triangle
$$\xymatrix{N\ar[r]^u&X_1\ar[r]^{v}&L\ar@{-->}[r]^{\del'}&,}$$
where $u$ is a left $\X$-approximation of $N$ and $v$ is a right $\X$-approximation of $L$.

Therefore, $(\X^{\perp_{\E}},\X^{\perp_{\E}})$ is an $\X$-mutation pair. \qed

\begin{lemma}\label{d3}
Let $\X$ be a subcategory of an extriangulated category $\C$ such that $\X^{\perp_{\E}}={^{\perp_{\E}}\X}$. If $(\cal U,\cal V)$ is a cotorsion pair on {\red $\C$} and $\X\subseteq\cal U\subseteq\X^{\perp_{\E}}$, then $\X\subseteq\cal V\subseteq\X^{\perp_{\E}}$.
\end{lemma}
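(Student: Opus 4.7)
The statement unpacks into two inclusions, both of which should follow directly from the defining properties of a cotorsion pair listed in Remark \ref{rem-cotorsion}, together with the symmetric perp hypothesis $\X^{\perp_{\E}}={^{\perp_{\E}}\X}$. I would not expect to need any of the heavier machinery (mutation pairs, Wakamatsu-type arguments) developed earlier, since nothing here requires constructing new $\E$-triangles; the whole content is a vanishing-Ext comparison.

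For the inclusion $\X\subseteq\cal V$, I would start from the hypothesis $\cal U\subseteq\X^{\perp_{\E}}$. By the symmetric perp assumption this is the same as $\cal U\subseteq{^{\perp_{\E}}\X}$, which by definition means $\E(\cal U,\X)=0$. Hence for every $X\in\X$ we have $\E(\cal U,X)=0$, and by part (2) of Remark \ref{rem-cotorsion} (characterizing $\cal V$ as $\{C:\E(\cal U,C)=0\}$) this forces $X\in\cal V$.

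For the inclusion $\cal V\subseteq\X^{\perp_{\E}}$, I would use the other inclusion $\X\subseteq\cal U$ together with the vanishing condition $\E(\cal U,\cal V)=0$ from Definition \ref{e1}(1). For any $V\in\cal V$ this immediately gives $\E(\X,V)\subseteq\E(\cal U,\cal V)=0$, i.e.\ $V\in\X^{\perp_{\E}}$.

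There is no real obstacle here: the argument is essentially a diagram of inclusions in which the hypothesis $\X^{\perp_{\E}}={^{\perp_{\E}}\X}$ is used precisely to pivot between ``$\E(\cal U,-)$ vanishes on $\X$'' and ``$\E(-,\X)$ vanishes on $\cal U$''. The only thing one must be a little careful about is citing the correct half of Remark \ref{rem-cotorsion} in each direction, since one inclusion uses the characterization of $\cal V$ by right-orthogonality to $\cal U$, while the other uses only the raw Ext-vanishing built into the definition of a cotorsion pair.
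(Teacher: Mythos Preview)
Your proposal is correct and matches the paper's proof essentially line for line: the paper uses $\cal U\subseteq\X^{\perp_{\E}}={^{\perp_{\E}}\X}$ to get $\E(\cal U,\X)=0$ and hence $\X\subseteq\cal V$, then uses $\X\subseteq\cal U$ and $\E(\cal U,\cal V)=0$ to get $\E(\X,\cal V)=0$ and hence $\cal V\subseteq\X^{\perp_{\E}}$. The only difference is that you make the appeal to Remark~\ref{rem-cotorsion}(2) explicit where the paper leaves it implicit.
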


\proof Since $\cal U\subseteq\X^{\perp_{\E}}={^{\perp_{\E}}\X}$, we have $\E(\cal U,\X)=0$ and then $\X\subseteq \cal V$.
Since $\X\subseteq\cal U$ and $\E(\cal U,\cal V)=0$, we have $\E(\X,\cal V)=0$ and then $V\subseteq\X^{\perp_{\E}}$.

\begin{lemma}\label{e5}
Let $\C$ be an extriangulated category, $\X\subseteq\cal A$ subcategories
of $\C$.  Assume that  $(\cal A,\cal A)$ forms {\red an} $\X$-mutation pair and $A,B$ are two objects in $\cal A$. If $\X$ is a rigid subcategory of $\C$, then $\E(A,B)\simeq\cal M(A,B\langle1\rangle)$.
\end{lemma}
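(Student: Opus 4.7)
The plan is to exploit the long exact sequence in cohomology coming from the $\E$-triangle that defines the shift $\langle 1\rangle$, combined with the rigidity of $\X$ and the mutation-pair axioms to force the relevant terms to vanish.

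By Theorem \ref{z1}, the object $B\langle 1\rangle$ is defined by an $\E$-triangle
$$\xymatrix{B\ar[r]^{a}&X_B\ar[r]^{b\quad}&B\langle 1\rangle\ar@{-->}[r]^{\delta_B}&,}$$
where $X_B\in\X$, $a$ is a left $\X$-approximation of $B$, and $b$ is a right $\X$-approximation of $B\langle 1\rangle$. Applying $\C(A,-)$ via the long exact sequence for extriangulated categories \cite{np} yields the exact sequence
$$\C(A,X_B)\xrightarrow{b_{*}}\C(A,B\langle 1\rangle)\xrightarrow{\partial}\E(A,B)\xrightarrow{a_{*}}\E(A,X_B).$$

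First I would show $\E(A,X_B)=0$, which forces $\partial$ to be surjective. By the second $\X$-mutation pair axiom applied to $A\in\cal A$, there is an $\E$-triangle $A''\to X'\to A \dashrightarrow$ with $A''\in\cal A$, $X'\in\X$, the first map a left $\X$-approximation of $A''$ and the second a right $\X$-approximation of $A$. Applying $\C(-,X_B)$ gives the exact fragment
$$\C(X',X_B)\longrightarrow\C(A'',X_B)\longrightarrow\E(A,X_B)\longrightarrow\E(X',X_B).$$
The rightmost term vanishes by rigidity of $\X$, and the left map is surjective because $A''\to X'$ is a left $\X$-approximation of $A''$. Hence $\E(A,X_B)=0$, and $\partial$ is surjective.

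It remains to identify $\ker\partial = \mathrm{Im}(b_{*})$ with the subgroup $\X(A,B\langle 1\rangle)$ of morphisms factoring through an object of $\X$. One inclusion is automatic: every element of $\mathrm{Im}(b_{*})$ factors through $X_B\in\X$. Conversely, a morphism $A\to B\langle 1\rangle$ factoring as $A\xrightarrow{h} X\xrightarrow{g} B\langle 1\rangle$ with $X\in\X$ can be lifted, since $b$ is a right $\X$-approximation of $B\langle 1\rangle$: pick $g'\colon X\to X_B$ with $bg'=g$, so the composition equals $b_{*}(g'h)\in\mathrm{Im}(b_{*})$. Combining these observations, the first isomorphism theorem gives
$$\cal M(A,B\langle 1\rangle)=\C(A,B\langle 1\rangle)/\X(A,B\langle 1\rangle)\;\cong\;\E(A,B),$$
with naturality in $A$ and $B$ following from functoriality of the long exact sequence.

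The main obstacle is the vanishing $\E(\cal A,\X)=0$; once this is verified from rigidity together with the approximation properties built into the mutation-pair axioms, the rest is a routine application of the Yoneda long exact sequence in the extriangulated setting. Note that this step uses both halves of the mutation-pair axioms in an essential way, and it is precisely where rigidity of $\X$ (rather than just its being functorially finite) enters.
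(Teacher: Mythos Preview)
Your proof is correct and follows essentially the same route as the paper: take the defining $\E$-triangle for $B\langle 1\rangle$, apply $\C(A,-)$, use $\E(A,X_B)=0$ to make the connecting map surjective, and identify $\mathrm{Im}(b_*)$ with $\X(A,B\langle 1\rangle)$. The only difference is that the paper obtains the vanishing $\E(\cal A,\X)=0$ by citing \cite[Lemma 3.5]{zhz}, whereas you prove it in line from the mutation-pair axioms and rigidity; you are also a bit more explicit about both inclusions in the identification $\mathrm{Im}(b_*)=\X(A,B\langle 1\rangle)$.
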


\proof
Since $B\in\cal A$, there exists an $\E$-triangle in $\C$
$$\xymatrix{B\ar[r]^f&X_B\ar[r]^g&B\langle1\rangle\ar@{-->}[r]^{\delta}&,}$$
where $X_B\in\X$ and $g$ is a right $\X$-approximation of $B\langle1\rangle$. Applying the functor $\C(A,-)$ to this $\E$-triangle, we have the following exact sequence
$$\C(A,X_B)\xrightarrow{\C(A,g)}\C(A,B\langle1\rangle)\xrightarrow{}\E(A,B)\xrightarrow{}0$$
as $\E(A,X_B)=0$ by \cite[Lemma 3.5]{zhz}.

Note that $\textrm{Im}\,\C(A,g)=\X(A,B\langle1\rangle)$. It follows that
 $$\cal M(A,B\langle1\rangle)=\C(A,B\langle1\rangle)/\X(A,B\langle1\rangle)=\C(A,B\langle1\rangle)/\textrm{Im}\,\C(A,g)\simeq\E(A,B).$$   \qed
\vspace{2mm}

The following theorem gives a one-to-one correspondence between cotorsion pairs whose core
containing $\X$ in $\C$ and cotorsion pairs in $\cal M$, which generalizes Theorem 3.5 in \cite{zz1}. In the following, $\overline{\cal U}$ denotes the subcategory of $\cal M$
consisting of objects $U\in\cal U$.

\begin{theorem}\label{h2}
{\red Let $\C$ be an extriangulated category with  enough projectives and enough injectives},
and $\X$ be a  functorially finite rigid subcategory of $\C$ such that
$\cal A:=\X^{\perp_{\E}}={^{\perp_{\E}}\X}$ and $\cal P,\cal I\subseteq\X$. If $\X\subseteq\cal U\subseteq\X^{\perp_{\E}}$
. Then $(\cal U ,\cal V)$ is a cotorsion pair  with core $I$ in $\C$ if and only if $(\overline{\cal U},\overline{\cal V})$  is a cotorsion pair  with the core $\overline{I}$ in $\cal M:=\cal A/\X$.
\end{theorem}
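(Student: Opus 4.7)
The plan is to reduce the theorem to the identification $\E(A,B)\cong \cal M(\overline A,\overline B\langle 1\rangle)$ for $A,B\in\cal A$ given by Lemma~\ref{e5}, combined with the fact from Lemma~\ref{d2} and Theorem~\ref{z1} that $\cal M=\cal A/\X$ carries a triangulated structure with shift $\langle 1\rangle$. A first observation is that Lemma~\ref{d3} forces $\X\subseteq\cal V\subseteq\X^{\perp_\E}=\cal A$, so both $\cal U$ and $\cal V$ sit inside $\cal A$ and descend to subcategories $\overline{\cal U},\overline{\cal V}$ of $\cal M$; the core identification $\overline I=\overline{\cal U}\cap\overline{\cal V}$ then follows from $\X\subseteq I$.

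The vanishing condition $\E(\cal U,\cal V)=0$ translates exactly to $\cal M(\overline{\cal U},\overline{\cal V}\langle 1\rangle)=0$ via Lemma~\ref{e5}. For the $(\Rightarrow)$ direction of the resolution axioms, take $M\in\cal A$ and an $\E$-triangle $V\to U\to M$ coming from the cotorsion pair in $\C$. Applying $\C(-,X)$ for $X\in\X$ yields an exact sequence terminating at $\E(M,X)=0$, since $M\in\cal A={}^{\perp_\E}\X$, so the map $V\to U$ is $\X$-monic. By the construction in Theorem~\ref{z1}(II) this $\E$-triangle descends to a triangle $\overline V\to\overline U\to\overline M\to\overline V\langle 1\rangle$ in $\cal M$, and the dual $\E$-triangle $M\to V'\to U'$ descends symmetrically.

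For the converse, an object $C\in\cal A$ is handled directly: by Theorem~\ref{z1}(II) each cotorsion-pair triangle in $\cal M$ with target $\overline C$ arises from an $\E$-triangle in $\C$, possibly after adding summands in $\X$ which can be absorbed into the outer terms since $\X\subseteq\cal U\cap\cal V$. For a general $C\in\C$, I would first construct $\E$-triangles $C\to A\to X$ and $X'\to A'\to C$ with $X,X'\in\X$ and $A,A'\in\cal A$ by following an injective (resp.\ projective) resolution of $C$ with a minimal $\X$-approximation and invoking Lemma~\ref{d1}, just as in the earlier Proposition and its dual. Then I would splice the cotorsion-pair $\E$-triangles produced for $A$ and $A'$ in $\cal A$ with these approximation sequences via the octahedral axiom (ET4) and its dual to obtain the required $\E$-triangles for $C$. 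The main obstacle is this final octahedral bookkeeping, and it is precisely here that the hypotheses $\proj(\C),\inj(\C)\subseteq\X$ become essential, both to produce the approximation $\E$-triangles and to keep the glued middle terms inside $\cal U$ and $\cal V$.
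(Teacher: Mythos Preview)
Your proposal is correct and follows essentially the same strategy as the paper: establish that $\cal M$ is triangulated via Lemma~\ref{d2} and Theorem~\ref{z1}, translate the vanishing condition through Lemma~\ref{e5}, pass $\E$-triangles in $\cal A$ to triangles in $\cal M$ (and back) using $\X$-monicity, and for a general $C\in\C$ reduce to an object of $\cal A$ via an approximation $\E$-triangle and then splice with (ET4).

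The only noteworthy difference is in this last reduction step. The paper works directly with the minimal right $\X$-approximation $K\to X_0\to C$ (so $K\in\cal A$ by Lemma~\ref{d1}) and combines it with the resolution $K\to V_2\to U_2$ of $K$ via Proposition~3.15 of \cite{np}; the middle column $X_0\to U\to U_2$ then shows $U\in\cal U$ by extension-closure. You instead propose to first pass through an injective/projective resolution to manufacture $\E$-triangles $C\to A\to X$ and $X'\to A'\to C$ with $A,A'\in\cal A$, as in Proposition~2.4 and its dual, and then splice with the cotorsion resolutions of $A,A'$. Both routes work; the paper's is one step shorter since the right $\X$-approximation of $C$ already yields an $\E$-triangle with kernel in $\cal A$, so the preliminary injective/projective step is unnecessary.
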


\proof By Lemma \ref{d2}, we know that $(\cal A,\cal A)$ is an $\X$-mutation pair. Since $\cal A$ is
extension closed, by Theorem \ref{z1}, we have that $\cal M$ is a triangulated category.

We first show the ``only if'' part.  Assume that $(\cal U ,\cal V)$ is a cotorsion pair in $\C$. Then $\E(\cal U,\cal V)=0$. By Lemma \ref{e5}, we have $\cal M(\cal U,\cal V\langle1\rangle)=0$.

For any $A\in\cal A$, there exists an $\E$-triangle in $\C$
$$\xymatrix{V\ar[r]^f&U\ar[r]^g&A\ar@{-->}[r]^{\del}&,}$$
where $V\in\cal V$ and $U\in\cal U$ as $(\cal U ,\cal V)$ is a cotorsion pair in $\C$. Applying the functor $\C(-,\X)$ to this $\E$-triangle, we have the following exact sequence
$$\C(U,\X)\xrightarrow{ {\red \C(f,\X)}}\C(V,\X)\xrightarrow{}\E(A,\X)=0.$$
It follows that $f$ is $\X$-monic.
Thus we get the following commutative diagram:
$$\xymatrix{V\ar[r]^{f}\ar@{=}[d]&U\ar[r]^g\ar[d]^b&A\ar@{-->}[r]^\del\ar[d]^h&\\
V\ar[r]^{\alpha}&X\ar[r]^{\beta}&V\langle1\rangle\ar@{-->}[r]^{\eta}&.}$$
Since $V,U,A$ are in $\cal A$, there exists
a standard triangle in $\cal M$:
$$\xymatrix{V\ar[r]^{\overline{f}}&U\ar[r]^{\overline{g}}&A\ar[r]^{\overline{h}\quad}&V\langle1\rangle.}$$
Therefore, $(\cal U,\cal V)$  is a cotorsion pair in $\cal M$ .
\vspace{2mm}

To prove the ``if'' part.  Assume that $(\cal U ,\cal V)$ is a cotorsion pair in $\cal M$. Then
$\cal M(\cal U,\cal V\langle1\rangle)=0$. By Lemma \ref{e5}, we have $\E(\cal U,\cal V)=0$.

For any $A\in\cal A$, there exist two standard triangles  in $\cal M$
$$\xymatrix{V_1\ar[r]^{\overline{f_1}}&U_1\ar[r]^{\overline{g_1}}&A\ar[r]^{\overline{h_1}\quad}&V_1\langle1\rangle}$$
$$\xymatrix{U_2\ar[r]^{\overline{f_2}\quad}&A\langle1\rangle\ar[r]^{\overline{g_2}}&V_2\langle1\rangle\ar[r]^{\overline{h_2}}&U_2\langle1\rangle,}$$
where $U_1,U_2\in\cal U$ and $V_1,V_2\in\cal V$ as $(\cal U ,\cal V)$ is a cotorsion pair in $\cal M$ and then
$$\xymatrix{V_1\ar[r]^{\overline{f_1}}&U_1\ar[r]^{\overline{g_1}}&A\ar[r]^{\overline{h_1}\quad}&V_1\langle1\rangle}$$
$$\xymatrix{A\ar[r]^{\quad\overline{ d_2}\quad}&V_2\ar[r]^{\overline{e_2}}&U_2\ar[r]^{\overline{f_2}}&A\langle1\rangle,}$$
are also two standard triangles in $\cal M$.
We may assume that it is induced by the following commutative diagrams of $\E$-triangles in $\C$.
$$\xymatrix{V_1\ar[r]^{f_1}\ar@{=}[d]&U_1\ar[r]^{g_1}\ar[d]^{c_1}&A\ar@{-->}[r]^{\del_1}\ar[d]^{h_1}&\\
V_1\ar[r]^{\alpha_1}&X_1\ar[r]^{\beta_1}&V_1\langle1\rangle\ar@{-->}[r]^{\del'_1}&,} \ \
\xymatrix{A\ar[r]^{d_2}\ar@{=}[d]&V_2\ar[r]^{e_2}\ar[d]^{c_2}&U_2\ar@{-->}[r]^{\del_2}\ar[d]^{f_2}&\\
A\ar[r]^{\alpha_2}&X_2\ar[r]^{\beta_2}&A\langle1\rangle\ar@{-->}[r]^{\del'_2}&.}$$
It follows that
$$\xymatrix{V_1\ar[r]^{f_1}&U_1\ar[r]^{g_1}&A\ar@{-->}[r]^{\del_1}&},\ \
 \xymatrix{A\ar[r]^{d_2}&V_2\ar[r]^{e_2}&U_2\ar@{-->}[r]^{\del_2}&,}$$
  are $\E$-triangles in $\cal A$, where $U_1,U_2\in\cal U$ and $V_1,V_2\in\cal V$.

For any $C\in\C$, there exists an $\E$-triangle
$$\xymatrix{K\ar[r]^f&X_0\ar[r]^{g}&C\ar@{-->}[r]^{\del}&,}$$
where $g$ is a minimal right $\X$-approximation of $C$.
By Lemma \ref{d1}, we have that $\E(\X,K)=0$ and then $K\in\cal A$.
Thus there exists an $\E$-triangle in $\C$.
$$\xymatrix{K\ar[r]^{d_2}&V_2\ar[r]^{e_2}&U_2\ar@{-->}[r]^{\del_2}&,}$$
where $V_2\in\cal V$ and $U_2\in\cal U$.
By Proposition 3.15 in \cite{np}, we obtain a commutative diagram
$$\xymatrix{
K\ar[r]^{f}\ar[d]^{d_2}&X_0\ar[r]^{f'}\ar[d]&C\ar@{-->}[r]^{\delta}\ar@{=}[d]&\\
V_2\ar[r]^{a}\ar[d]^{e_2}&U\ar[r]^b\ar[d]&C\ar[r]^{\eta}&\\
U_2\ar@{=}[r]\ar@{-->}[d]^{\del_g}&U_2\ar@{-->}[d]^{\C}&\\
&&}$$
of $\E$-triangles. Since $\cal U$ is extension closed, we have $U\in\cal U$.
Namely, for any $C\in\C$, there exists an $\E$-triangle
$$\xymatrix{V_2\ar[r]^{a}&U\ar[r]^{b}&C\ar@{-->}[r]^{\eta}&,}$$
where $V_2\in\cal V$ and $U\in\cal U$.

Similarly, we can show that there exists an $\E$-triangle
$$\xymatrix{C\ar[r]^{c}&V_3\ar[r]^{d}&U_3\ar@{-->}[r]^{\theta}&,}$$
where $V_3\in\cal V$ and $U_3\in\cal U$.
Therefore, {\red $(\cal U,\cal V)$}  is a cotorsion pair in $\C$.
\vspace{3mm}

Finally, we have that {\red $I(\overline{\cal U})=\overline{\cal U}\cap\overline{\cal V}=\overline{\cal U\cap\cal V}=\overline{I(\cal U)}$. } \qed
\vspace{2mm}

This theorem immediately yields the following.

\begin{corollary}\label{corollary:correspondence between cotorsion pairs}
Let $\C$ be a Frobenius extriangulated category.
Then $(\cal U ,\cal V)$ is a cotorsion pair with core $I$ in $\C$ if and only if $(\overline{\cal U},\overline{\cal V})$  is a cotorsion pair  with the core $\overline{I}$ in $\overline{\C}$.
\end{corollary}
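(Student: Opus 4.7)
The plan is to deduce this as a direct specialization of Theorem~\ref{h2}, taking $\X$ to be $\pj$, the subcategory of projective-injective objects. In the Frobenius case $\pj = \proj(\C) = \inj(\C)$, so $\pj^{\perp_\E} = {}^{\perp_\E}\pj = \C$ (every object is trivially $\E$-orthogonal to a projective and to an injective), and hence $\cal A = \X^{\perp_\E} = \C$. The resulting quotient $\cal A/\X = \C/\pj$ is, by construction, the stable category $\overline{\C}$, which already carries a triangulated structure by \cite{np} (and which coincides with the triangulated structure produced on $\cal M$ by Theorem~\ref{z1}).

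The first step is to check that $\pj$ satisfies all the hypotheses of Theorem~\ref{h2}. Rigidity is immediate from the defining property of projective and injective objects. The equalities $\pj^{\perp_\E} = {}^{\perp_\E}\pj = \C$ follow for the same reason. Functorial finiteness of $\pj$ follows from the standing assumption that $\C$ has enough projectives and enough injectives: for any $C \in \C$, the deflation $P \to C$ in an $\E$-triangle $A \to P \to C \dashrightarrow$ with $P \in \pj$ is automatically a right $\pj$-approximation (any morphism from a projective into $C$ lifts along it), and a left $\pj$-approximation of $C$ is produced dually from an $\E$-triangle $C \to I \to B \dashrightarrow$ with $I \in \pj$. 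The inclusions $\proj(\C), \inj(\C) \subseteq \pj$ are tautological in the Frobenius setting.

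Next, the hypothesis $\X \subseteq \cal U \subseteq \X^{\perp_\E}$ of Theorem~\ref{h2} collapses to $\pj \subseteq \cal U$, which holds automatically by Remark~\ref{rem-cotorsion}(5) since $\proj(\C) = \pj$. Plugging these data into Theorem~\ref{h2} yields the claimed bijection: $(\cal U,\cal V)$ is a cotorsion pair in $\C$ with core $I = \cal U \cap \cal V$ if and only if $(\overline{\cal U},\overline{\cal V})$ is a cotorsion pair in $\overline{\C}$, and the core transports as $\overline{I} = \overline{\cal U \cap \cal V} = \overline{\cal U} \cap \overline{\cal V}$.

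Since the corollary is a direct specialization, there is essentially no obstacle to overcome beyond the bookkeeping above; the only subtle point worth flagging is the compatibility of the two triangulated structures on $\overline{\C}$ (the classical one from Happel's theorem extended to extriangulated Frobenius categories, and the one produced by the mutation machinery of Theorem~\ref{z1}), but this has already been recorded in the development preceding Theorem~\ref{h2}.
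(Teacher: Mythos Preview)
Your proof is correct and follows the same approach as the paper: specialize Theorem~\ref{h2} with $\X = \pj$, so that $\cal A = \X^{\perp_\E} = \C$ and $\cal M = \overline{\C}$. The paper's own proof is a one-line appeal to Theorem~\ref{h2} and Theorem~\ref{z1} after noting that $(\C,\C)$ is a $\pj$-mutation pair; you have simply unpacked the verification of the hypotheses (rigidity, functorial finiteness via enough projectives/injectives, $\pj^{\perp_\E} = {}^{\perp_\E}\pj = \C$, and $\pj \subseteq \cal U$) that the paper leaves implicit.
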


\proof It is easy to see that $(\C,\C)$ forms an $\cal I$-mutation pair. This follows from Theorem \ref{h2} and Theorem \ref{z1}. \qed
\vspace{2mm}

 Apply to  exact stably $2$-Calabi-Yau categories (e.g. module categories over preprojective algebras of Dykin quivers or the subcategories $\mathcal C_M$ of module categories over preprojective algebras, where $M$ is a terminal module, for details see \cite{gls1,gls2}), we have the following correspondence between cotorsion pairs in an exact stably $2$-Calabi-Yau category and in its stable category.
\begin{corollary}
Let $\cal B$ be an exact stably $2$-Calabi-Yau category. Then $(\cal U ,\cal V)$ is a cotorsion pair with core $I$ in $\cal B$ if and only if $(\overline{\cal U},\overline{\cal V})$  is a cotorsion pair  with the core $\overline{I}$ in $\overline{\cal B}$.
\end{corollary}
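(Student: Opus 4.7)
The plan is to derive this corollary directly from Corollary \ref{corollary:correspondence between cotorsion pairs} by observing that an exact stably $2$-Calabi-Yau category is a Frobenius extriangulated category in a canonical way.

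First, I would recall the viewpoint established in Example \ref{a4}(1): any exact category $\cal B$ carries an extriangulated structure $(\cal B, \Ext^1_{\cal B}, \s)$, in which the realization $\s$ sends an equivalence class of admissible short exact sequences to itself. Under this identification, the projective (respectively injective) objects of $\cal B$ in the usual exact sense coincide with the projective (respectively injective) objects in the extriangulated sense, and the notions of ``having enough projectives'' and ``having enough injectives'' also agree (see Remark \ref{a5}(1)). Consequently, since $\cal B$ is Frobenius as an exact category, it is Frobenius as an extriangulated category as well.

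Second, I would verify that the two possible meanings of the stable category $\overline{\cal B}$ coincide. By Happel's classical theorem, the quotient of the Frobenius exact category $\cal B$ by its projective-injective objects inherits a triangulated structure, and this is precisely the construction used by Nakaoka--Palu to produce the triangulated category $\overline{\C}$ in Corollary \ref{corollary:correspondence between cotorsion pairs}. Thus the stable category $\overline{\cal B}$ referenced in the present corollary is exactly the triangulated quotient appearing in that corollary.

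With these identifications in place, the statement follows by a direct application of Corollary \ref{corollary:correspondence between cotorsion pairs} to $\C = \cal B$: a pair $(\cal U, \cal V)$ is a cotorsion pair with core $I$ in $\cal B$ (in the exact, equivalently extriangulated, sense) if and only if $(\overline{\cal U}, \overline{\cal V})$ is a cotorsion pair with core $\overline{I}$ in $\overline{\cal B}$. No real obstacle arises here; the only point that requires any thought is the bookkeeping that the exact-category cotorsion pair of \cite[Definition~2.3]{l} matches Definition \ref{e1} in the extriangulated structure attached to $\cal B$, which is immediate because conflations in this structure are exactly the admissible short exact sequences and $\E = \Ext^1_{\cal B}$ by construction. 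The $2$-Calabi-Yau hypothesis is not needed for the correspondence itself; it is just the context in which the statement is being recorded.
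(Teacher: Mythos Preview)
Your proposal is correct and follows exactly the route the paper takes: the corollary is stated immediately after Corollary~\ref{corollary:correspondence between cotorsion pairs} as a direct specialization, using only that an exact stably $2$-Calabi-Yau category is a Frobenius extriangulated category via Example~\ref{a4}(1) and Remark~\ref{a5}(1). Your additional remark that the $2$-Calabi-Yau hypothesis plays no role in the correspondence itself is also accurate.
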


\section{Mutations of cluster tilting subcategories}
\setcounter{equation}{0}

\begin{definition}\label{y5}
Let $\C$ be an extriangulated category, $\X$ a subcategory of $\C$.
\begin{itemize}
\item $\X$ is called \emph{cluster tilting} if it satisfies the following conditions:
\begin{enumerate}
\item[(1)] $\X$ is functorially finite in $\C$;
\item[(2)] $M\in \X$ if and only if $\E(M,\X)=0$;
\item[(3)] $M\in \X$ if and only if $\E(\X,M)=0$.
\end{enumerate}

\item  An object $X$ is called \emph{cluster tilting}  if $\add X$ is cluster tilting.
\end{itemize}
\end{definition}

This definition is a generalization of cluster tilting subcategories in triangulated categories \cite{bmrrt,kr,kz,iy,b} and in exact categories \cite{gls2,gls3,i}.

By definition of a cluster tilting subcategory, we can immediately conclude:
\begin{remark}\label{y6}
{\red Let $\C$ be an extriangulated category with  enough projectives and injectives.}
\begin{itemize}
\item  If $\X$ is a cluster tilting  subcategory of $\C$, then $\proj(\C)\subseteq\X$ and $\inj(\C)\subseteq\X$.
\item  $\X$ is a cluster tilting subcategory of $\C$ if and only if
\begin{enumerate}
\item[(1)] $\X$ is rigid;
\item[(2)] For any $C\in\C$, there exists an $\E$-triangle $\xymatrix@C=0.5cm{C\ar[r]^{a\;} & X_1 \ar[r]^{b} & X_2\ar@{-->}[r]^{\del}&}$, where $X_1,X_2\in\X$;
\item[(3)] For any $C\in\C$, there exists an $\E$-triangle $\xymatrix@C=0.5cm{X_3\ar[r]^{c\;} & X_4 \ar[r]^{d} & C\ar@{-->}[r]^{\eta}&}$, where $X_3,X_4\in\X$.
\end{enumerate}
\end{itemize}
\end{remark}

\begin{proposition}\label{z6}
Let $\X$ be a functorially finite rigid subcategory of a $2$-Calabi-Yau extriangulated category $\C$ such that $\proj(\C)\subseteq\X$.
Then $\cal N:=\X^{\perp_{\E}}/\X$ is a $2$-Calabi-Yau triangulated category.
\end{proposition}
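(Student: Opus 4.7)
The plan is to combine the machinery already set up in Section 2 with the $2$-Calabi-Yau hypothesis to upgrade Theorem \ref{z1} from ``triangulated'' to ``$2$-Calabi-Yau triangulated''. First I would verify that the hypotheses of Lemma \ref{d2} are met. The $2$-Calabi-Yau isomorphism $\E(A,B)\simeq \mathsf{D}\E(B,A)$ makes the conditions $\E(-,\X)=0$ and $\E(\X,-)=0$ equivalent, which gives $\X^{\perp_{\E}}={}^{\perp_{\E}}\X$ for free. The same duality shows that any injective object $I$ satisfies $\E(I,-)\simeq \mathsf{D}\E(-,I)=0$, so $I$ is projective; combined with the standing hypothesis $\proj(\C)\subseteq\X$ this yields $\inj(\C)\subseteq\proj(\C)\subseteq\X$. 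With these checks done, Lemma \ref{d2} applies and $(\X^{\perp_{\E}},\X^{\perp_{\E}})$ is an $\X$-mutation pair.

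Next I would note that $\X^{\perp_{\E}}$ is extension closed (as recorded in the text just before the proposition), so Theorem \ref{z1} produces the triangulated structure on $\cal N=\X^{\perp_{\E}}/\X$, with shift functor $\langle 1\rangle$ determined by $\E$-triangles $A\to X\to A\langle 1\rangle$ where the first map is a left $\X$-approximation. The $k$-linear, Hom-finite and Ext-finite structure is inherited from $\C$ in the obvious way, since $\X^{\perp_{\E}}$ is a full additive subcategory and $\cal N(A,B)$ is a subquotient of $\C(A,B)$.

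To get the $2$-Calabi-Yau property, I would apply Lemma \ref{e5} twice. Since $\X$ is rigid and $(\X^{\perp_{\E}},\X^{\perp_{\E}})$ is an $\X$-mutation pair, the lemma yields a natural isomorphism $\cal N(A,B\langle 1\rangle)\simeq \E(A,B)$ for all $A,B\in\X^{\perp_{\E}}$. Chaining this with the $2$-Calabi-Yau isomorphism of $\C$ and its mirror gives
\[
\cal N(A,B\langle 1\rangle)\;\simeq\;\E(A,B)\;\simeq\;\mathsf{D}\E(B,A)\;\simeq\;\mathsf{D}\,\cal N(B,A\langle 1\rangle),
\]
which is the required duality. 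The only real thing to check is bifunctoriality of the composite, and the main obstacle in the proof is exactly this: the middle isomorphism is bifunctorial by the assumption on $\C$, but the first and last isomorphisms come from Lemma \ref{e5}, which is built out of a choice of right $\X$-approximation $X_B\to B\langle 1\rangle$. I would argue naturality by showing that any morphism $A\to A'$ or $B\to B'$ in $\X^{\perp_{\E}}$ can be lifted along these approximation $\E$-triangles, using the axiom (ET3) and the definition of $\langle 1\rangle$, and that two different lifts differ by a morphism factoring through $\X$, hence agree in $\cal N$. This produces a bifunctorial isomorphism $\cal N(-,-\langle 1\rangle)\simeq \mathsf{D}\,\cal N(-,-\langle 1\rangle)^{\mathrm{op}}$, completing the proof that $\cal N$ is a $2$-Calabi-Yau triangulated category.
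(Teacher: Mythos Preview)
Your proposal is correct and follows essentially the same route as the paper: the paper's proof is the one-line citation ``This follows from Lemma \ref{d2}, Theorem \ref{z1} and Lemma \ref{e5}'', and you have unpacked exactly these three ingredients, including the verification (which the paper leaves implicit) that the $2$-Calabi-Yau duality forces $\X^{\perp_{\E}}={}^{\perp_{\E}}\X$ and $\inj(\C)\subseteq\X$ so that Lemma \ref{d2} applies. Your additional discussion of bifunctoriality is a point the paper does not address explicitly, but it does not change the approach.
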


\proof This follows from Lemma \ref{d2}, Theorem \ref{z1} and Lemma \ref{e5}.  \qed

\vspace{2mm}

The following result is easy to verify and will be used in the sequel.

\begin{lemma}\label{z7}
Let $\C$ be an additive category and $\X\subseteq\cal A$ two subcategories of $\C$.
\begin{enumerate}
\item[\emph{(1)}] If $\X$ is contravariantly finite in $\C$, then $\cal A$ is contravariantly finite in $\C$
if and only if $\cal A/\X$ is contravariantly finite in $\C/\X$.
\item[\emph{(2)}] If $\X$ is covariantly finite in $\C$, then $\cal A$ is covariantly finite in $\C$
if and only if $\cal A/\X$ is covariantly finite in $\C/\X$.
\item[\emph{(3)}] If $\X$ is functorially finite in $\C$, then $\cal A$ is functorially finite in $\C$
if and only if $\cal A/\X$ is functorially finite in $\C/\X$.
\end{enumerate}
\end{lemma}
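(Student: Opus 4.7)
The plan is to prove (1) directly; statement (2) follows by dualizing every arrow, and (3) is an immediate combination of (1) and (2). So I focus on (1).

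For the ``only if'' direction, suppose $\cal A$ is contravariantly finite in $\C$. Given $C\in\C$, pick a right $\cal A$-approximation $f\colon A\to C$ in $\C$ and consider its image $\overline{f}\colon A\to C$ in $\C/\X$. For any $A'\in\cal A$ and any morphism $\overline{g}\colon A'\to C$ in $\C/\X$, lift $\overline{g}$ to some $g\colon A'\to C$ in $\C$; by the approximation property there is $h\colon A'\to A$ with $g=fh$, hence $\overline{g}=\overline{f}\,\overline{h}$. Thus $\overline{f}$ is a right $\cal A/\X$-approximation of $C$ in $\C/\X$.

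For the ``if'' direction, this is the step requiring some care. Assume $\cal A/\X$ is contravariantly finite in $\C/\X$ and that $\X$ is contravariantly finite in $\C$. Given $C\in\C$, choose
\begin{itemize}
\item a right $\cal A/\X$-approximation $\overline{f}\colon A\to C$ in $\C/\X$, represented by some $f\colon A\to C$ in $\C$, and
\item a right $\X$-approximation $p\colon X\to C$ in $\C$.
\end{itemize}
I claim that $(f,\,p)\colon A\oplus X\to C$ is a right $\cal A$-approximation of $C$. Indeed, for any $A'\in\cal A$ and $g\colon A'\to C$, the image $\overline{g}$ factors as $\overline{g}=\overline{f}\,\overline{h}$ for some $h\colon A'\to A$; thus $g-fh$ factors through an object of $\X$, say $g-fh=qr$ with $r\colon A'\to X_0$, $q\colon X_0\to C$ and $X_0\in\X$. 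Using that $p$ is a right $\X$-approximation, we may write $q=ps$ for some $s\colon X_0\to X$, so
$$g=fh+p(sr)=(f,\,p)\begin{pmatrix}h\\ sr\end{pmatrix},$$
which is exactly what we need. Note that $A\oplus X\in\cal A$ because $\X\subseteq\cal A$ and $\cal A$ is closed under direct sums (as a subcategory, in the convention of the paper).

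The only real obstacle is the ``if'' direction and, within it, the verification that the factorization through $\X$ in $\C$ can be routed through the chosen $\X$-approximation $p$; this is exactly why the hypothesis that $\X$ is contravariantly finite in $\C$ is needed. Everything else is essentially formal manipulation with morphisms modulo $\X$. Statement (2) is proved by reversing arrows: an $\cal A$-approximation in $\C$ descends to one in $\C/\X$, and conversely, combining a left $\cal A/\X$-approximation with a left $\X$-approximation into a map $C\to A\oplus X$ yields a left $\cal A$-approximation. Statement (3) is the conjunction of (1) and (2).
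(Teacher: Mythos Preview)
Your argument is correct and is exactly the routine verification one would expect; the paper itself omits all details, writing only ``It is straightforward to check.'' Your write-up thus supplies what the authors left to the reader, and the key idea in the ``if'' direction---combining a lift of a right $\cal A/\X$-approximation with a right $\X$-approximation into the single map $(f,p)\colon A\oplus X\to C$---is the standard one.
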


\proof It is straightforward to check.  \qed

\begin{theorem}\label{z8}
Let $\C$ be a $2$-Calabi-Yau extriangulated category, and $\X$  a functorially finite rigid subcategory of $\C$ such that $\proj(\C)\subseteq\X$. Denote $\cal N:=\X^{\perp_{\E}}/\X$. The correspondence $\cal  R\longmapsto\overline{\cal R}:=\cal R/\X$ gives
\begin{enumerate}
\item[\emph{(1)}] a one-one correspondence between rigid subcategories of $\C$ containing $\X$ and rigid subcategories of $\cal N$, and
\item[\emph{(2)}] a one-one correspondence between cluster tilting subcategories of $\C$ containing $\X$ and cluster tilting subcategories of $\cal N$.
\end{enumerate}
\end{theorem}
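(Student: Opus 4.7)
The plan is to transfer everything to the $2$-Calabi-Yau triangulated quotient $\cal N$ via Lemma \ref{e5} and Theorem \ref{h2}. Since $\C$ is $2$-Calabi-Yau, $\X^{\perp_{\E}}={^{\perp_{\E}}\X}$; writing $\cal A=\X^{\perp_{\E}}$, Lemma \ref{d2} makes $(\cal A,\cal A)$ an $\X$-mutation pair, Theorem \ref{z1} makes $\cal N=\cal A/\X$ triangulated, and Proposition \ref{z6} identifies $\cal N$ as $2$-Calabi-Yau. Any rigid $\cal R\supseteq\X$ satisfies $\E(\cal R,\X)\subseteq\E(\cal R,\cal R)=0$, hence $\cal R\subseteq{^{\perp_{\E}}\X}=\cal A$; so the candidate subcategories of $\C$ all lie in $\cal A$, and $\cal R\mapsto\overline{\cal R}:=\cal R/\X$ sends them to subcategories of $\cal N$. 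Its inverse sends $\overline{\cal S}\subseteq\cal N$ to its preimage in $\cal A$, which automatically contains $\X$. These two operations are mutually inverse on subcategories containing $\X$ and closed under isomorphism, direct sums and summands: the Krull-Schmidt hypothesis gives that $\overline{A}\cong\overline{B}$ in $\cal N$ implies $A\oplus X\cong B\oplus X'$ in $\cal A$ for some $X,X'\in\X$, so membership is determined by the image in $\cal N$.

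For part (1), Lemma \ref{e5} supplies a natural isomorphism $\E(A,B)\cong\cal N(A,B\langle 1\rangle)$ for $A,B\in\cal A$; hence $\E(\cal R,\cal R)=0$ if and only if $\cal N(\overline{\cal R},\overline{\cal R}\langle 1\rangle)=0$. Combined with the subcategory bijection just described, this is precisely the rigid correspondence in (1).

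For part (2), I would identify cluster tilting subcategories with cotorsion pairs and then invoke Theorem \ref{h2}. By Remark \ref{y6}(2), the cluster tilting conditions on $\cal R$ (rigidity plus existence, for every $C\in\C$, of $\E$-triangles $C\to R_1\to R_2$ and $R_3\to R_4\to C$ with $R_i\in\cal R$) are exactly conditions (1)--(3) of Definition \ref{e1} for $(\cal R,\cal R)$ to be a cotorsion pair; functorial finiteness is then automatic by Remark \ref{rem-cotorsion}(4), and the core is $\cal R$. Applying Theorem \ref{h2} with $\cal U=\cal V=\cal R$ --- whose hypothesis $\X\subseteq\cal R\subseteq\X^{\perp_{\E}}$ holds by rigidity and $\X\subseteq\cal R$, and whose assumption $\proj(\C),\inj(\C)\subseteq\X$ follows from the $2$-CY duality $\proj(\C)=\inj(\C)$ together with $\proj(\C)\subseteq\X$ --- cotorsion pairs $(\cal R,\cal R)$ on $\C$ correspond bijectively to cotorsion pairs $(\overline{\cal R},\overline{\cal R})$ on $\cal N$. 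Applying the same cluster tilting/cotorsion pair identification inside the $2$-Calabi-Yau triangulated category $\cal N$ (where it is standard, going back to [iy]) then yields the cluster tilting correspondence in (2).

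The main technical subtlety is the Krull-Schmidt lifting that makes the set-level correspondence between $\{\cal R\subseteq\cal A:\X\subseteq\cal R\}$ and subcategories of $\cal N$ a genuine bijection; once this is in place, both parts are formal consequences of Lemma \ref{e5} (for (1)) and Theorem \ref{h2} applied to the diagonal pair $(\cal R,\cal R)$ (for (2)).
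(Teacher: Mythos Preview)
Your proof is correct. Part (1) matches the paper's argument exactly (both invoke Lemma~\ref{e5}). For part (2), however, you take a different route from the paper.

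The paper argues part (2) directly: after using Lemma~\ref{e5} to handle the orthogonality conditions, it proves the equivalence of functorial finiteness of $\cal R$ in $\C$ and of $\overline{\cal R}$ in $\cal N$ by hand. One direction cites Lemma~\ref{z7}; for the other, given $C\in\C$ it first takes a right $\X$-approximation $X_0\to C$ with kernel $K\in\cal A$, lifts an $\E$-triangle $K\to R_0\to R_1$ from the cluster tilting property of $\overline{\cal R}$ in $\cal N$, and then uses \cite[Proposition~3.15]{np} to splice these into an $\E$-triangle $R_0\to M\to C$ with $M\in\cal R$, producing the required right $\cal R$-approximation.

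Your approach instead identifies cluster tilting subcategories with diagonal cotorsion pairs $(\cal R,\cal R)$ via Remark~\ref{y6} and then invokes Theorem~\ref{h2} to transfer these between $\C$ and $\cal N$. This is more economical: the construction the paper repeats here is essentially the same as the one already carried out in the proof of the ``if'' direction of Theorem~\ref{h2}, so your argument avoids duplication. The paper's direct argument, on the other hand, makes the functorial finiteness transfer explicit and does not require unpacking the cotorsion-pair formalism. Both arguments ultimately rest on the same Krull--Schmidt lifting and the same splicing via \cite[Proposition~3.15]{np}; yours simply packages it through Theorem~\ref{h2}.
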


\proof Obviously, any rigid subcategories $\cal R$ of $\C$ containing $\X$ is contained in $\X^{\perp_{\E}}={^{\perp_{\E}}\X}$.

(1). This follows from Lemma \ref{e5}.
\vspace{1.5mm}

(2). By Lemma \ref{e5}, it suffices to show that $\cal R$ is a functorially finite subcategory of $\C$ if and only if
$\overline{\cal R}$ is a  functorially finite subcategory of $\cal N$.
\vspace{2mm}

Since $\cal R$ and $\X$ are functorially finite subcategories of $\C$.
By Lemma \ref{z7}, we have that $\overline{\cal R}$ is a functorially finite subcategory of $\cal N:=\X^{\perp_{\E}}/\X$.

Conversely, for any $C\in\C$, since $\X$ is a  functorially finite subcategory of $\C$, there exists an $\E$-triangle
$$\xymatrix{K\ar[r]^{f}&X_0\ar[r]^{g}&C\ar@{-->}[r]^{\del}&,}$$
where $g$ is a right $\X$-approximation of $C$.   Applying the functor $\Hom_{\C}(\X,-)$ to the above $\E$-triangle, we have the following exact sequence
$$\Hom_{\C}(\X,X_0)\xrightarrow{\Hom_{\C}(\X,\ g)}\Hom_{\C}(\X,C)\xrightarrow{}\E(\X,K)\xrightarrow{}\E(\X,X_0)=0.$$
Since $g$ is a right $\X$-approximation of $C$, we have that $\Hom_{\C}(\X,g)$ is an epimorphism.
It follows that $\E(\X,K)=0$ and then $K\in\X^{\perp_{\E}}$.
Since $\overline{\cal R}$ is a cluster tilting subcategory of $\cal N$, there exists a triangle
$$\xymatrix{K\ar[r]^{u}&R_0\ar[r]^{v}&R_1\ar[r]^{w\quad}&K\langle1\rangle,}$$
in $\cal N$, where $R_0,R_1\in\cal R$. Without loss of generality, we can assume that the above triangle can be induced by
this $\E$-triangle
$$\xymatrix{K\ar[r]^{u}&R_0\ar[r]^{v}&R_1\ar@{-->}[r]&,}$$
where $R_0,R_1\in\cal R$. By Proposition 3.15 in \cite{np}, we obtain a commutative diagram
$$\xymatrix{
K\ar[r]^{f}\ar[d]^{u}&X_0\ar[r]^{g}\ar[d]&C\ar@{-->}[r]\ar@{=}[d]&\\
R_0\ar[r]^{a}\ar[d]^{v}&M\ar[r]^b\ar[d]&C\ar[r]&\\
R_1\ar@{=}[r]\ar@{-->}[d]&R_1\ar@{-->}[d]&\\
&&}$$
of $\E$-triangles in $\C$.
Since $\cal R$ is rigid and $X_0,R_1\in\cal R$, we have $M\in\cal R$.
Applying the functor $\Hom_{\C}(\cal R,-)$ to this $\E$-triangle
$$\xymatrix{R_0\ar[r]^{a}&M\ar[r]^{b}&C\ar@{-->}[r]&,}$$
we have the following exact sequence
$$\Hom_{\C}(\cal R,M)\xrightarrow{\Hom_{\C}(\cal R,\ b)}\Hom_{\C}(\cal R,C)\xrightarrow{~~}\E(\cal R,R_0)=0.$$
This shows that $\Hom_{\C}(\cal R, b)$ is an epimorphism.
Thus $\cal R$ is a contravariantly finite subcategory of $\C$.
Similarly, we can show that $\cal R$ is a covariantly finite subcategory of $\C$.
Therefore, $\cal R$ is a functorially finite subcategory of $\C$.
  \qed

\begin{theorem}\label{z9}
Let $\C$ be a $2$-Calabi-Yau extriangulated category, and $\X$ be a  functorially finite rigid subcategory of $\C$ such that $\proj(\C)\subseteq\X$. If $(\cal U,\cal V)$ is an $\X$-mutation pair in $\C$, then
\begin{enumerate}
\item[\emph{(1)}] $\cal U$ is a rigid subcategory of $\C$ if and only if so is $\cal V$.
\item[\emph{(2)}] $\cal U$ is a {\red cluster tilting} subcategory of $\C$ if and only if so is $\cal V$.
\end{enumerate}
\end{theorem}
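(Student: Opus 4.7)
The plan is to reduce to the $2$-Calabi-Yau triangulated quotient $\cal N:=\X^{\perp_\E}/\X$ produced by Proposition \ref{z6}, where the mutation pair $(\overline{\cal U},\overline{\cal V})$ becomes simply a shift of subcategories.

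First I would set up $\cal N$ and record its key properties. The $2$-Calabi-Yau duality $\E(A,B)\simeq\mathsf{D}\E(B,A)$ forces $\X^{\perp_\E}={}^{\perp_\E}\X$ and also $\inj(\C)=\proj(\C)\subseteq\X$ (since $P$ projective $\Leftrightarrow$ $\E(P,-)=0$ $\Leftrightarrow$ $\E(-,P)=0$ $\Leftrightarrow$ $P$ injective). Hence Lemma \ref{d2} and Proposition \ref{z6} apply, endowing $\cal N$ with the structure of a $2$-Calabi-Yau triangulated category whose shift $\langle 1\rangle$ is built from $\X$-approximation $\E$-triangles as in Theorem \ref{z1}.

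Next I would prove the key inclusion $\cal U\cup\cal V\subseteq\X^{\perp_\E}$. For $U\in\cal U$, take the defining $\E$-triangle $U\xrightarrow{\alpha}X\xrightarrow{\beta}V\dashrightarrow$ of the mutation pair, with $X\in\X$ and $V\in\cal V$. Apply $\C(X',-)$ for $X'\in\X$: since $\beta$ is a right $\X$-approximation the map $\C(X',X)\to\C(X',V)$ is surjective, and $\E(X',X)=0$ by rigidity of $\X$, so the long exact sequence yields $\E(X',U)=0$; hence $U\in\X^{\perp_\E}$. Dually, applying $\C(-,X')$ and using that $\alpha$ is a left $\X$-approximation yields $\E(V,X')=0$, so $V\in{}^{\perp_\E}\X=\X^{\perp_\E}$. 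Lemma \ref{z4} (with $\Y=\X$) then says $(\overline{\cal U},\overline{\cal V})$ is an $\overline{\X}$-mutation pair in $\cal N$. Since $\overline{\X}=0$, the mutation triangle $U\to X\to V$ becomes in $\cal N$ a distinguished triangle $U\to 0\to V\to U\langle 1\rangle$; comparing with the defining $\E$-triangle of $U\langle 1\rangle$ (both have the same left $\X$-approximation $U\to X$) yields $\overline{V}\cong \overline{U}\langle 1\rangle$. Thus $\overline{\cal V}=\overline{\cal U}\langle 1\rangle$ as subcategories of $\cal N$.

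Finally I would transport rigidity and cluster-tilting between $\C$ and $\cal N$ via Theorem \ref{z8}, noting that $\X\subseteq\cal U$ and $\X\subseteq\cal V$ by the definition of mutation pair, so both $\cal U$ and $\cal V$ are covered by the one-to-one correspondence $\cal R\mapsto\overline{\cal R}$ there. Since $\langle 1\rangle$ is an auto-equivalence of $\cal N$, it preserves rigidity and cluster-tilting; hence $\overline{\cal U}$ has either property if and only if $\overline{\cal V}=\overline{\cal U}\langle 1\rangle$ does, and transporting back establishes (1) and (2) simultaneously. The main obstacle is the inclusion $\cal U\cup\cal V\subseteq\X^{\perp_\E}$: once that is in hand, the quotient machinery developed in Sections 2--3 reduces the theorem to the formal fact that shifts in a triangulated category preserve rigid and cluster-tilting subcategories.
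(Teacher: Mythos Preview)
Your proposal is correct and follows essentially the same route as the paper: pass to the $2$-Calabi-Yau triangulated quotient $\cal N=\X^{\perp_\E}/\X$ via Proposition~\ref{z6}, use Lemma~\ref{z4} to see that $(\overline{\cal U},\overline{\cal V})$ is a $0$-mutation pair so that $\overline{\cal V}=\overline{\cal U}\langle 1\rangle$, and then transport rigidity and cluster-tilting through Theorem~\ref{z8}. The only difference is that you make explicit two verifications the paper leaves tacit---namely that $\X^{\perp_\E}={}^{\perp_\E}\X$ (and $\inj(\C)=\proj(\C)$) from the $2$-Calabi-Yau duality, and the inclusion $\cal U\cup\cal V\subseteq\X^{\perp_\E}$ needed to invoke Lemma~\ref{z4}---which is a welcome addition rather than a departure.
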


\proof By Proposition \ref{z6}, we have that $\cal N:=\X^{\perp_{\E}}/\X$ is a $2$-Calabi-Yau triangulated category.
By Lemma \ref{z4}, we obtain that $(\overline{\cal U},\overline{\cal V}):=(\cal U/\X,\cal V/\X )$ forms $0$-mutation pair in $\cal N$.
Thus we have $\overline{\cal V}=\overline{\cal U}\langle1\rangle$. In particular, $\overline{\cal U}$ is a rigid subcategory
(resp. cluster tilting subcategory) of $\cal N$ if and only if $\overline{\cal V}$ is a rigid subcategory
(resp. cluster tilting subcategory) of $\cal N$. On the other hand, by Theorem \ref{z8}, we have that
$\overline{\cal U}$ (resp. $\overline{\cal V}$) is a rigid subcategory
(resp. cluster tilting subcategory) of $\cal N$ if and only if $\cal U$ (resp. $\cal V$) is a rigid subcategory
(resp. cluster tilting subcategory) of $\C$. Thus the assertions follow.  \qed

\begin{definition}\label{defnz11}
Let $\C$ be a $2$-Calabi-Yau extriangulated category.
We call a functorially finite rigid subcategory $\X$ of $\C$ such that $\proj(\C)\subseteq\X$ \emph{almost complete cluster tilting}
if there exists a cluster tilting subcategory $\cal R$ of $\C$ such that $\X\subseteq\cal R$ and $\cal R=\X\cup{\red \add R_0}$,
where $R_0$ is an indecomposable object which is not isomorphic to any object in $\X$.
Such $R_0$ is called a \emph{complement} of a almost complete cluster tilting subcategory $\X$.
\end{definition}

\begin{theorem}\label{thmz11}
Let $\C$ be a $2$-Calabi-Yau extriangulated category.
Then any almost complete cluster tilting subcategory $\X$ of $\C$ is contained in exactly two
cluster tilting subcategories $\cal R$ and $\cal Q$ of $\C$.
Both $(\cal R,\cal Q)$ and $(\cal Q,\cal R)$ form $\X$-mutation pairs.
\end{theorem}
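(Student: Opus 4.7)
My plan is to reduce the statement to the analogous result for 2-Calabi-Yau triangulated categories (Iyama-Yoshino \cite{iy}) by passing to the additive quotient $\cal N:=\X^{\perp_\E}/\X$. By Proposition \ref{z6}, $\cal N$ is a 2-Calabi-Yau triangulated category whose shift functor $\langle 1\rangle$ is built from left/right $\X$-approximations as in Theorem \ref{z1}(I). By Theorem \ref{z8}(2), the rule $\cal T\mapsto\overline{\cal T}:=\cal T/\X$ sets up a bijection between cluster tilting subcategories of $\C$ containing $\X$ and cluster tilting subcategories of $\cal N$; under this bijection the given $\cal R=\X\cup\add R_0$ is sent to $\overline{\cal R}=\add\overline{R_0}$, a cluster tilting subcategory of $\cal N$ generated by the single nonzero indecomposable $\overline{R_0}$.

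I would next apply the triangulated case to $\cal N$. The zero subcategory of $\cal N$ is almost complete cluster tilting (completed by $\overline{R_0}$), so by \cite[Theorem~5.3]{iy} it admits exactly two completions, $\overline{R_0}$ and $\overline{Q_0}$, linked by a mutation triangle $\overline{R_0}\to 0\to\overline{Q_0}\to\overline{R_0}\langle 1\rangle$ which forces $\overline{Q_0}\simeq\overline{R_0}\langle 1\rangle$. Together with the fact that every cluster tilting subcategory of $\cal N$ has rank one (since every object of $\cal N$ admits a short $\add\overline{R_0}$-resolution, one sees that the only indecomposable rigid objects of $\cal N$ are $\overline{R_0}$ and $\overline{R_0}\langle 1\rangle$), this shows $\cal N$ has exactly two cluster tilting subcategories. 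Pulling back through the bijection, the cluster tilting subcategories of $\C$ containing $\X$ are precisely $\cal R$ and $\cal Q:=\X\cup\add Q_0$, where $Q_0$ is any indecomposable lift of $\overline{R_0}\langle 1\rangle$.

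For the $\X$-mutation pair assertion, I would use the explicit description of $\langle 1\rangle$ from Theorem \ref{z1}(I): for each $R\in\cal R$ there is an $\E$-triangle in $\C$ of the form $R\to X_R\to R\langle 1\rangle$ with $X_R\in\X$, where $R\to X_R$ is a left $\X$-approximation and $X_R\to R\langle 1\rangle$ a right $\X$-approximation. Since $\overline{\cal Q}=\overline{\cal R}\langle 1\rangle$, we have $R\langle 1\rangle\in\cal Q$, verifying condition (1) of the $\X$-mutation pair definition for $(\cal R,\cal Q)$; condition (2) is immediate from the dual construction applied to $Q\in\cal Q$, and $(\cal Q,\cal R)$ follows by interchanging roles via $\cal R=\cal Q\langle -1\rangle$. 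The hard part will be the rank-one claim inside $\cal N$: while \cite[Theorem~5.3]{iy} counts completions of the zero subcategory, ruling out cluster tilting subcategories of $\cal N$ with two or more indecomposables requires a supplementary argument exploiting that $\add\overline{R_0}$ is itself cluster tilting, so that the whole category $\cal N$ is controlled by $\overline{R_0}$ via the approximation sequences.
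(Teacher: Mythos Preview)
Your approach is essentially the same as the paper's: pass to the $2$-Calabi-Yau triangulated quotient $\cal N=\X^{\perp_\E}/\X$ via Proposition~\ref{z6}, observe that $0$ is almost complete cluster tilting there, invoke \cite[Theorem~5.3]{iy}, and pull back through the bijection of Theorem~\ref{z8}(2). The paper dispatches the $\X$-mutation pair claim with ``it is easy to see''; your explicit use of the approximation $\E$-triangles from Theorem~\ref{z1}(I) just spells this out.

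Your final paragraph flags a difficulty that is not actually there. The statement of \cite[Theorem~5.3]{iy} is that an almost complete cluster tilting subcategory of a $2$-Calabi-Yau triangulated category is contained in \emph{exactly two} cluster tilting subcategories---not merely that it admits two indecomposable complements. Since every subcategory of $\cal N$ contains $0$, this immediately says $\cal N$ has exactly two cluster tilting subcategories; no separate ``rank-one'' argument is needed. (If one insisted on deriving the rank constraint independently, it would follow from the standard fact that in a $2$-Calabi-Yau triangulated category with a cluster tilting object all basic cluster tilting objects have the same number of indecomposable summands, but the paper does not take this detour and you need not either.)
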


\proof We have that $\cal N:=\X^{\perp_{\E}}/\X$ is a $2$-Calabi-Yau triangulated category, and  $0$ is a almost complete cluster tilting subcategory of $\cal N$.
Since any almost complete cluster tilting subcategory are exactly two cluster tilting subcategory in $2$-Calabi-Yau triangulated category, see \cite[Theorem 5.3]{iy}. Then the object $0$ in $\cal N$ has two complements, say $Q_0$, $R_0$, and both ($Q_0, R_0)$ $(R_0,Q_0)$ form {\red $0$-matation pairs} in $\cal N$. By Theorem \ref{z8}, we have that $\X$ {\red are} contained in exactly two
cluster tilting subcategories: $\cal R=\X\cup\add R_0$, $\cal Q=\X\cup\add Q_0$. It is easy to see that  $(\cal R,\cal Q)$ and $(\cal Q,\cal R)$ form $\X$-mutation pairs. \qed
\vspace{3mm}

For almost complete cluster tilting object, we have the following.

\begin{corollary}\label{z12}
Let $\C$ be a $2$-Calabi-Yau extriangulated category. Then any basic almost complete cluster tilting object $R$ of $\C$ such that $\proj(\C)\subseteq\add R$ is a direct summand of exactly two basic
cluster tilting objects in $\C$.
\end{corollary}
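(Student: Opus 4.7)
The plan is to deduce this corollary as a direct application of Theorem \ref{thmz11} after translating between basic objects and their additive closures. Set $\X:=\add R$. Since $R$ is basic almost complete cluster tilting and $\proj(\C)\subseteq\add R$, the subcategory $\X$ is a functorially finite rigid subcategory of $\C$ containing $\proj(\C)$, and by Definition \ref{defnz11} it is almost complete cluster tilting, because extending $R$ by one indecomposable to a cluster tilting object $R\oplus R_0$ yields a cluster tilting subcategory $\add(R\oplus R_0)=\X\cup\add R_0$ with $R_0$ not in $\X$. Thus $\X$ satisfies the hypotheses of Theorem \ref{thmz11}.

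Next, I would apply Theorem \ref{thmz11} to $\X$. It provides exactly two cluster tilting subcategories $\cal R$ and $\cal Q$ of $\C$ containing $\X$, and by Definition \ref{defnz11} they have the shape $\cal R=\X\cup\add R_0$ and $\cal Q=\X\cup\add Q_0$ for indecomposable objects $R_0,Q_0$ not lying in $\X$. Forming the basic objects $T_1:=R\oplus R_0$ and $T_2:=R\oplus Q_0$, we obtain two basic cluster tilting objects of $\C$ each having $R$ as a direct summand, with $\add T_1=\cal R$ and $\add T_2=\cal Q$.

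The step requiring a small verification is the claim that these are the \emph{only} basic cluster tilting objects of $\C$ having $R$ as a direct summand. For this, suppose $T$ is any basic cluster tilting object with $R$ as a summand. Since $\C$ is Krull-Schmidt, the subcategory $\add T$ is cluster tilting and contains $\X=\add R$, so by the uniqueness part of Theorem \ref{thmz11} we have $\add T\in\{\cal R,\cal Q\}$. Because $T$ is basic, it is uniquely determined up to isomorphism by $\add T$ as the direct sum of one copy of each indecomposable of $\add T$, hence $T\cong T_1$ or $T\cong T_2$. Finally, the two options $T_1$ and $T_2$ are genuinely distinct: $R_0\not\in\X$ while $Q_0\in\cal Q\setminus\cal R$ (otherwise $\cal R=\cal Q$, contradicting Theorem \ref{thmz11}), so $T_1\not\cong T_2$.

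I do not anticipate any real obstacle; the whole content has been packaged into Theorem \ref{thmz11}, and the only point to be careful about is the passage between the subcategory-level statement (``contained in exactly two cluster tilting subcategories'') and the object-level statement (``direct summand of exactly two basic cluster tilting objects''), which is handled by the Krull-Schmidt property assumed globally in the paper.
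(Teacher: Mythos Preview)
Your proposal is correct and matches the paper's approach: the paper states Corollary~\ref{z12} immediately after Theorem~\ref{thmz11} without proof, treating it as the obvious object-level reformulation, and your argument is precisely the standard translation between basic objects and their additive closures via the Krull-Schmidt property.
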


Now for an almost complete cluster tilting subcategory $\X$ of $\C$, assume that $Q_0,R_0$ are two complements of $\X$ in Theorem \ref{thmz11}. Then there are two $\E$-triangles related to
$Q_0, R_0$:
$$
\xymatrix{Q_0\ar[r]^{a}&X\ar[r]^{b}&R_0\ar@{-->}[r]^{\delta}&}$$
$$
\xymatrix{R_0\ar[r]^{a'}&X'\ar[r]^{b'}&Q_0\ar@{-->}[r]^{\delta'}&}
$$
where $a, a'$ are the minimal left $\X$-approximations, and $b,b'$ are the minimal right $\X$-approximations.
These two $\E$-triangles are called \emph{exchange $\E\textrm{-}$triangles}.

\section{Cotorsion pairs in a $2$-Calabi-Yau extriangulated category}
 Let $\C$ be a $2$-Calabi-Yau extriangulated category with a cluster tilting object. We will give a classification of cotorsion pairs in $\C$ in the first subsection and study the cluster
 structures in a cotorsion pair inherited from $\C$ in the second subsection.

\subsection{Classification of cotorsion pairs}
Let $\C$ be a Frobenius extriangulated category. For any objects $A,B\in\C$, by Lemma \ref{e5}, we have a functorially isomorphism $$\E(A,B)\simeq\Hom_{\overline{\C}}(A,B\langle 1\rangle).$$
Thus $\C$ is $2$-Calabi-Yau if and only if the stable category
$\overline{\C}$ is $2$-Calabi-Yau.

\begin{lemma}\label{one-to-one}
Let $\C$ be a $2$-Calabi-Yau extriangulated category. Then
\begin{itemize}
\item[\emph{(1)}] $\X$ is a rigid subcategory of $\C$ if and only if $\overline{\X}$ is  a rigid subcategory of $\overline{\C}$.

\item[\emph{(2)}] $\X$ is a cluster tilting subcategory of $\C$ if and only if $\overline{\X}$ is  a cluster tilting subcategory of $\overline{\C}$.
    \end{itemize}
\end{lemma}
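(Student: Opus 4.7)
The plan is to specialize Theorem \ref{z8} and Lemma \ref{e5} to the case $\X = \pj$, the subcategory of projective-injective objects. First I would verify the hypotheses: in the Frobenius setting, $\pj$ is rigid because projective-injective objects satisfy $\E(\pj,-)=0=\E(-,\pj)$; it is functorially finite because $\C$ has enough projectives and enough injectives (so one gets right and left $\pj$-approximations from the projective covers and injective hulls used to build the defining $\E$-triangles); and $\proj(\C)=\pj$ trivially contains $\proj(\C)$. The same vanishing shows $\pj^{\perp_{\E}} = {}^{\perp_{\E}}\pj = \C$, so the quotient category appearing in Theorem \ref{z8} and Proposition \ref{z6} is $\cal N = \C/\pj = \overline{\C}$, which inherits the triangulated structure from Theorem \ref{z1}. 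It is also worth noting that $(\C,\C)$ is a $\pj$-mutation pair, as was pointed out in the proof of Corollary \ref{corollary:correspondence between cotorsion pairs}.

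For part (1), I would apply Lemma \ref{e5} with $\cal A = \C$ and $\X = \pj$. This produces a natural isomorphism
\[
\E(A,B) \simeq \overline{\C}(A, B\langle 1\rangle)
\]
for every pair $A,B \in \C$. Rigidity of $\X$ in $\C$ is by definition the vanishing of the left-hand side for all $A,B\in\X$, while rigidity of $\overline{\X}$ in $\overline{\C}$ is the vanishing of the right-hand side. The equivalence is then immediate.

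For part (2), I would invoke Theorem \ref{z8}(2) with $\X = \pj$, which gives a one-to-one correspondence between cluster tilting subcategories of $\C$ that contain $\pj$ and cluster tilting subcategories of $\cal N = \overline{\C}$. To translate this into the stated iff, I use Remark \ref{y6}: any cluster tilting subcategory of $\C$ automatically contains $\proj(\C)=\inj(\C)=\pj$, so the condition ``containing $\pj$'' is vacuous on the left-hand side. In the reverse direction, for a subcategory $\X\subseteq\C$ whose image $\overline{\X}$ is cluster tilting in $\overline{\C}$, enlarging $\X$ by $\pj$ does not change its image in $\overline{\C}$ (projective-injectives become zero), and Theorem \ref{z8}(2) identifies this enlargement with a cluster tilting subcategory of $\C$; but by Remark \ref{y6} any cluster tilting subcategory already contains $\pj$, so in fact $\X$ itself equals this enlargement and is cluster tilting.

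The main obstacle is not really technical: it is simply the bookkeeping involved in checking that $\pj$ meets the hypotheses of Theorem \ref{z8} and Lemma \ref{e5}, and in handling the (essentially cosmetic) discrepancy between ``subcategory containing $\pj$'' and ``arbitrary subcategory'' in part (2). Once those are in place, both parts reduce to reading off the statements of the two earlier results.
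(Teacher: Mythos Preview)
Your approach matches the paper's exactly: its proof is the single line ``This is a specific case of Theorem~\ref{z8}'', and you have correctly unwound what that entails, verifying that $\pj$ is functorially finite and rigid with $\pj^{\perp_\E}={}^{\perp_\E}\pj=\C$, so that the quotient in Theorem~\ref{z8} is $\overline{\C}$.

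One minor slip: in the reverse direction of (2), the clause ``by Remark~\ref{y6} any cluster tilting subcategory already contains $\pj$, so in fact $\X$ itself equals this enlargement'' is circular---you are invoking a property of cluster tilting subcategories to conclude that $\X$ is one. What Theorem~\ref{z8}(2) actually yields is that $\X\cup\pj$ is cluster tilting; this equals $\X$ only if $\pj\subseteq\X$ to begin with. The honest resolution is that the lemma is tacitly stated for subcategories $\X$ containing $\pj$ (equivalently, for $\X$ the full preimage of $\overline{\X}$ under $\C\to\overline{\C}$), which is harmless since that is precisely how it is applied later and the paper's one-line proof glosses over the same point.
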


\proof {\red This is a specific case of Theorem \ref{z8}.} \qed

\begin{proposition}\label{prop:decomposition of category}
 Let $\C$ be a $2$-Calabi-Yau extriangulated category with a cluster tilting object. Let $(\X,\Y)$ be a cotorsion pair in $\C$. Then  the core $\cal I=\add I$, for some rigid object which containing all indecomposable projective objects in $\C$
 and there exists a decomposition of triangulated category $^{\perp_\E}\I/\I=\X/\I\oplus \Y/\I$.
\end{proposition}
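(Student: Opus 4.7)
The argument would split into three stages.

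First, I would analyze the core $\I := \X \cap \Y$. Rigidity is immediate: for $A,B \in \I$ we have $A \in \X$ and $B \in \Y$, so $\E(A,B) \subseteq \E(\X,\Y) = 0$. The $2$-Calabi-Yau duality $\E(P,-) \simeq D\E(-,P)$ shows that projective objects coincide with injective objects, so $\C$ is Frobenius; hence $\proj(\C) = \inj(\C) \subseteq \X \cap \Y = \I$ by Remark \ref{rem-cotorsion}(5). To conclude $\I = \add I$ for a single object, I would transfer via Lemma \ref{one-to-one} to the $2$-Calabi-Yau triangulated stable category $\overline{\C}$ and exploit the classical fact that in such a category with cluster tilting every rigid indecomposable is a direct summand of a basic cluster tilting object, producing only finitely many isomorphism classes of indecomposable rigid objects and hence finitely many indecomposables in $\I$.

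Second, I would reduce to a triangulated setting. Both $\X, \Y$ lie in ${^{\perp_{\E}}\I}$: for $\X$ this follows from $\I \subseteq \Y$ and $\E(\X,\Y)=0$; for $\Y$ it follows from $2$-CY duality together with rigidity of $\I$, namely $\E(\Y,\I) \simeq D\E(\I,\Y) \subseteq D\E(\I,\I) = 0$. By duality ${^{\perp_{\E}}\I} = \I^{\perp_{\E}}$, so by Proposition \ref{z6} the quotient $\cal N := {^{\perp_{\E}}\I}/\I$ is a $2$-Calabi-Yau triangulated category. Applying Theorem \ref{h2} to the rigid subcategory $\I$ converts $(\X,\Y)$ into a cotorsion pair $(\overline{\X}, \overline{\Y}) = (\X/\I, \Y/\I)$ in $\cal N$ whose core $\overline{\I}$ is trivial.

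Third---the crux---I would show that a cotorsion pair with trivial core in a $2$-Calabi-Yau triangulated category with cluster tilting induces a direct sum decomposition of the ambient category. For each $M \in \cal N$, I would combine the two defining triangles $V \to U \to M \to V\langle 1 \rangle$ and $M \to V' \to U' \to M\langle 1 \rangle$ (with $U,U' \in \overline{\X}$, $V,V' \in \overline{\Y}$) via the octahedral axiom and exploit both the cotorsion-pair vanishing $\Hom_{\cal N}(\overline{\X}, \overline{\Y}\langle 1 \rangle) = 0$ and its $2$-CY dual $\Hom_{\cal N}(\overline{\Y}, \overline{\X}\langle 1 \rangle) = 0$, together with $\overline{\X} \cap \overline{\Y} = 0$, to force the connecting morphisms to vanish in $\cal N$ and the triangles to split, yielding $M \simeq U \oplus V'$. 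Uniqueness of the decomposition, together with the orthogonalities $\Hom_{\cal N}(\overline{\X},\overline{\Y}) = \Hom_{\cal N}(\overline{\Y},\overline{\X}) = 0$, would then follow by a symmetric diagram chase using the same $\Hom$-exact sequences.

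The principal obstacle lies in stage three: the cotorsion pair only supplies $\Ext^1$-vanishings, whereas the connecting morphisms of the triangles live in $\Hom$-groups, so splitting is not automatic. One must combine both defining triangles through the octahedral axiom and carefully cancel contributions using the zero-core condition. This is precisely where the cluster tilting hypothesis enters essentially, as it guarantees both the minimal approximations used implicitly in Theorem \ref{h2} and the finiteness of rigid subcategories that keep the diagram chase tractable.
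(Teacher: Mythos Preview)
Your first two stages are broadly in line with the paper (modulo a slip in stage~2: the inclusion ``$D\E(\I,\Y)\subseteq D\E(\I,\I)$'' is backwards; what you want is simply $\I\subseteq\X$ and $\E(\X,\Y)=0$, hence $\E(\I,\Y)=0$, hence $\E(\Y,\I)=0$ by $2$-CY). The paper also establishes $\I=\add I$ by passing to the stable category and using that rigid objects there are summands of cluster tilting objects.

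The gap is in stage~3. You correctly identify this as the crux, but the argument you sketch does not go through as written. Having both $\Hom_{\cal N}(\overline\X,\overline\Y\langle1\rangle)=0$ and its $2$-CY dual $\Hom_{\cal N}(\overline\Y,\overline\X\langle1\rangle)=0$ tells you that $(\overline\Y,\overline\X)$ is \emph{also} a cotorsion pair with trivial core, but it does \emph{not} force the connecting morphisms $M\to V\langle1\rangle$ or $U'\to M\langle1\rangle$ of the defining triangles to vanish: those maps live in $\Hom_{\cal N}(M,\overline\Y\langle1\rangle)$ and $\Hom_{\cal N}(\overline\X,M\langle1\rangle)$, which are not controlled by either orthogonality. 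An octahedral composite of the two triangles produces maps landing in $\Hom_{\cal N}(\overline\X,\overline\X\langle1\rangle)$ or $\Hom_{\cal N}(\overline\Y,\overline\Y\langle1\rangle)$, and neither of these vanishes in general. So the ``cancel contributions using the zero-core condition'' step is where the argument stalls.

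Your diagnosis of \emph{where} cluster tilting enters is also off: it is not needed for Theorem~\ref{h2}, nor to make a diagram chase ``tractable''. The paper uses it structurally. After observing (via \cite[Corollary~4.5]{zz2}, applied in the stable category $\overline\C$) that $(\Y,\X)$ is also a cotorsion pair in $\C$, one deduces that $\X$ is functorially finite. The decomposition $^{\perp_\E}\I/\I=\X/\I\oplus\Y/\I$ is then obtained by invoking \cite[Lemma~II.2.2]{birs}, whose proof genuinely uses the existence of a cluster tilting object (roughly, one splits the cluster tilting object $T$ of $\cal N$ along $\overline\X$ and $\overline\Y$ and uses $\cal N=\add T\ast\add T\langle1\rangle$). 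If you want a self-contained stage~3, that is the mechanism you should implement rather than an abstract octahedral splitting.
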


\proof {\red Since $\C$ is a $2$-Calabi-Yau extriangulated category with a cluster tilting object, the $\mbox{Proj}(\C)=\mbox{Inj} (\C) =\add P$ for a projective-injective object $P$ in $\C$, and the stable category $\overline{\C}$ of $\C$ is a $2$-Calabi-Yau triangulated category with a cluster tilting object. Therefore, as  a rigid subcategory in $\overline{\C}$, $\I =\add I_1$, where $I_1$ is a rigid object in $\overline{\C}$.}  Then the core $\cal I=\add I$, where $I=I_1\oplus P$ is a rigid object $I$ in $\C$. It is easy to see that $\X={^{\perp_{\E}}\Y}$ and $\Y=\X^{\perp_{\E}}$.
It is straightforward to check that $(\X,\X)$ forms a $\I$-mutation pair. Since $\X$ is extension closed, we have that
$\X/\I$ is a triangulated category.

By Proposition \ref{z6}, we have that $^{\perp_\E}\I/\I$ is a $2$-Calabi-Yau triangulated category.
By Lemma \ref{one-to-one}, $^{\perp_\E}\I/\I$ has a cluster tilting object.
Since $(\X,\Y)$ is a cotorsion pair with core $\I$,
by Corollary \ref{corollary:correspondence between cotorsion pairs}, we have that
$(\overline{\X},\overline{\Y})$ is a cotorsion pair in the stable category $\overline{\C}$ with core $\overline{\I}$.
By Corollary 4.5 in \cite{zz2}, we obtain that $(\overline{\Y},\overline{\X})$ is a cotorsion pair in $\overline{\C}$ with core $\overline{\I}$.
By Corollary \ref{corollary:correspondence between cotorsion pairs}, we have that
$(\Y,\X)$ is a cotorsion pair in $\C$ with core $\I$. Thus we obtain that
$\X$ is functorially finite subcategory in $\C$. By Lemma II.2.2 in \cite{birs}, we have a decomposition
$$^{\perp_\E}\I/\I=\X/\I\oplus \Y/\I.$$  \qed

Combining this result with the decomposition theorem of $2$-Calabi-Yau triangulated categories with a cluster tilting object in \cite{zz2}, we have the classification of cotorsion pairs with
given core $\I$ in a $2$-Calabi-Yau extriangulated category with a cluster tilting object.

\begin{theorem}
  Let $\C$ be a $2$-Calabi-Yau extriangulated category with a cluster tilting object and $\I$ a
rigid subcategory of $\C$ such that $\proj(\C)\subseteq\I$. Let $^{\perp_\E}\I/\I=\oplus_{j\in J}A_j$ be the complete decomposition of
$^{\perp_\E}\I/\I$ (where all $A_j$ are indecomposable triangulated categories). Then
\begin{itemize}
\item[\emph{1}.] all cotorsion pairs with core $\I$ are obtained
as preimages under $\pi:{}^{\perp_\E}\I\rightarrow {}^{\perp_\E}\I/\I$ of the pairs
$(\oplus_{j\in L}A_j, \oplus_{j\in J-L}A_j)$ where $L$ is a subset of $J$. There are
$2^{ns({}^{\perp_\E}\I/\I)}$ cotorsion pairs with core $\I$, where  $ns(\C)$ is the number of indecomposable direct summands of such decomposition of $\C$.
\item[\emph{2}.] $(\X,\Y)$ {\red is} a cotorsion pair with core $\I$ if and only if so is $(\Y,\X)$.\end{itemize}
\end{theorem}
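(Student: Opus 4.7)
The plan is to reduce everything to the corresponding statement in the triangulated quotient $^{\perp_\E}\I/\I$, where the classification is already known from \cite{zz2}, and then lift via the correspondence of Corollary~\ref{corollary:correspondence between cotorsion pairs} (or Theorem~\ref{h2} applied with the subcategory $\I$).

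First, I would observe that by Theorem~\ref{h2} applied with $\X=\I$, cotorsion pairs $(\X,\Y)$ in $\C$ with $\I\subseteq\X\cap\Y$ are in bijection with cotorsion pairs $(\overline\X,\overline\Y)$ in the triangulated quotient $\cal N := {}^{\perp_\E}\I/\I$, and the bijection preserves the core; in particular, those cotorsion pairs in $\C$ whose core is exactly $\I$ correspond to those cotorsion pairs in $\cal N$ whose core is exactly $\overline{\I}=0$. By Proposition~\ref{z6}, $\cal N$ is a $2$-Calabi-Yau triangulated category, and by Lemma~\ref{one-to-one} (or Theorem~\ref{z8}) it inherits a cluster tilting object from $\C$. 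So the problem reduces to classifying cotorsion pairs with trivial core in a $2$-Calabi-Yau triangulated category with a cluster tilting object.

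Next, I would apply the decomposition/classification result of \cite{zz2} in this triangulated setting. Writing the complete decomposition $\cal N=\bigoplus_{j\in J}A_j$ into indecomposable triangulated summands, the results of \cite{zz2} say that every cotorsion pair with core $0$ in $\cal N$ has the form $(\bigoplus_{j\in L}A_j,\bigoplus_{j\in J-L}A_j)$ for a unique subset $L\subseteq J$, and conversely every such pair is a cotorsion pair. This immediately gives $2^{|J|}=2^{ns(\cal N)}$ cotorsion pairs with core $0$ in $\cal N$, and hence the same count for cotorsion pairs with core $\I$ in $\C$. Pulling back under the projection $\pi:{}^{\perp_\E}\I\to\cal N$ (using Proposition~\ref{prop:decomposition of category}, which already identifies $\X/\I\oplus\Y/\I={}^{\perp_\E}\I/\I$), one recovers precisely the pairs described in the statement. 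Part~(2) then follows at once: swapping $L$ and $J-L$ swaps the two factors of the decomposition, so $(\overline\X,\overline\Y)$ is a cotorsion pair in $\cal N$ iff $(\overline\Y,\overline\X)$ is, and by the bijection this symmetry lifts to $\C$.

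The main technical point to check carefully is that the correspondence of Theorem~\ref{h2} preserves the core exactly (so that ``core $=\I$ in $\C$'' matches ``core $=0$ in $\cal N$'') and that the decomposition of $\cal N$ as a triangulated category corresponds under $\pi^{-1}$ to the decomposition $\X/\I\oplus\Y/\I$ used in Proposition~\ref{prop:decomposition of category}. The first is essentially the last line of the proof of Theorem~\ref{h2}, namely $\overline{\cal U\cap\cal V}=\overline{\cal U}\cap\overline{\cal V}$; the second requires only that summands in the quotient lift to extension-closed subcategories of $^{\perp_\E}\I$ containing $\I$, which is automatic since $\I$ is identified with $0$ in $\cal N$. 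Once these two compatibilities are in place, both assertions follow formally from the triangulated classification of \cite{zz2}.
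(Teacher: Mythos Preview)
Your proposal is correct and follows essentially the same route as the paper: reduce to the $2$-Calabi-Yau triangulated quotient ${}^{\perp_\E}\I/\I$ via Theorem~\ref{h2} (the paper packages part of this passage into Proposition~\ref{prop:decomposition of category}), invoke the classification \cite[Theorem 4.4]{zz2} there, and read off part~(2) as the symmetry $L\leftrightarrow J\setminus L$. Your write-up is simply more explicit than the paper's two-line proof.
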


\proof The first assertion follows from Proposition \ref{prop:decomposition of category} and \cite[Theorem 4.4]{zz2}, the second is a consequence of the first one.  \qed
\vspace{2mm}

As an application to exact stably $2$-Calabi-Yau categories with cluster tilting objects, we get a classification of cotorsion pairs in these categories.

\begin{corollary}
 Let $\cal B$ be an exact stably $2$-Calabi-Yau category with a cluster tilting object and $\I$ a
rigid subcategory of $\cal B$ such that $\proj(\C)\subseteq\I$. Let $^{\perp_1}\I/\I=\oplus_{j\in J}A_j$ be the complete decomposition of
the triangulated category $^{\perp_1}\I/\I$. Then all cotorsion pairs with core $\I$ are obtained
as preimages under $\pi:{}^{\perp_1}\I\rightarrow {}^{\perp_1}\I/\I$ of the pairs
$(\oplus_{j\in L}A_j, \oplus_{j\in J-L}A_j)$ where $L$ is a subset of $J$. There are
$2^{ns({}^{\perp_1}I/I)}$ cotorsion pairs with core $\I$, where  $ns(\cal B)$ is the number of indecomposable direct summands of such decomposition of $\cal B$ and $\I^{\perp_{1}}=\{M\in\cal B\mid \emph{\Ext}^{1}_{\cal B}(\I,M)=0\}$.
 Moreover if $(\X,\Y)$ is a cotorsion pair in $\cal B$, then so is $(\Y,\X)$.
\end{corollary}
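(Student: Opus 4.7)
The plan is to reduce the corollary to the preceding theorem, which was formulated for arbitrary $2$-Calabi-Yau extriangulated categories with a cluster tilting object. The main point is to verify that an exact stably $2$-Calabi-Yau category $\cal B$ is a legitimate instance of such a category, and to identify the two pieces of notation $^{\perp_1}\I$ and $^{\perp_\E}\I$.

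First I would note that, by Example \ref{a4}(1), any exact category is extriangulated with $\E = \Ext^1_{\cal B}$ and the obvious realization $\s$, so the perpendicular subcategory $^{\perp_\E}\I$ on the extriangulated side coincides verbatim with $^{\perp_1}\I$ on the exact side. Moreover, since $\cal B$ is Frobenius with $\overline{\cal B}$ a $2$-Calabi-Yau triangulated category, the natural isomorphism $\E(A,B)\simeq\Hom_{\overline{\cal B}}(A,B\langle 1\rangle)$ obtained from Lemma \ref{e5} (applied with $\X = \proj(\cal B)$) makes $(\cal B,\E,\s)$ $2$-Calabi-Yau in the sense of Definition \ref{y1}. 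Since $\cal B$ is assumed to have a cluster tilting object, the hypotheses of the previous theorem are met.

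Having verified the hypotheses, the classification of cotorsion pairs with core $\I$ and the count $2^{ns(^{\perp_1}\I/\I)}$ are immediate translations of the statement of the preceding theorem. For the final assertion, that $(\Y,\X)$ is a cotorsion pair whenever $(\X,\Y)$ is, I would argue the same way: the symmetry statement in part~2 of the preceding theorem applies verbatim once the identifications are in place, and this already covers every cotorsion pair (not only those with a prescribed core), since one may take $\I:= \X\cap\Y$ as its core.

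The whole argument is essentially a dictionary check between the exact and extriangulated languages, so there is no real obstacle; the one point that deserves a line of care is confirming that the triangulated quotient $^{\perp_1}\I/\I$ obtained here via the Frobenius stable category construction of Happel agrees with the triangulated quotient $^{\perp_\E}\I/\I$ produced by Proposition \ref{z6} (viewed through the mutation pair machinery of Theorem \ref{z1}). This follows because both sides are built from the same $\E$-triangles in $\cal B$ with $\I$ playing the role of projective-injectives, so their shift functors and triangulated structures coincide, and the decomposition $^{\perp_1}\I/\I = \bigoplus_{j\in J} A_j$ used in the statement is literally the decomposition invoked in the extriangulated version.
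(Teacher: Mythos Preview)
Your proposal is correct and follows exactly the route the paper intends: the corollary is stated as a direct application of the preceding theorem to exact stably $2$-Calabi-Yau categories, and the paper gives no further proof beyond the sentence ``As an application to exact stably $2$-Calabi-Yau categories with cluster tilting objects, we get a classification of cotorsion pairs in these categories.'' Your dictionary check (Example~\ref{a4}(1) for $\E=\Ext^1_{\cal B}$, Example~\ref{y2} for the $2$-Calabi-Yau condition, and the identification $^{\perp_\E}\I={}^{\perp_1}\I$) is precisely what is being invoked implicitly; if anything, you could shorten the argument by citing Example~\ref{y2} directly rather than rederiving the $2$-Calabi-Yau property via Lemma~\ref{e5}.
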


From this corollary, we can get a classification of cotorsion pairs in the module categories over preprojective algebras of Dynkin quivers or in the categories $\mathcal{C}_M$, where $M$ is a terminal module over preprojective algebras; these categories are used to categorify some cyclic cluster algebras with coefficients by Gei{\ss}-Leclerc-Schr\"{o}er in \cite{gls2,gls3}. This classification on cotorsion pairs may be used to study the cluster subalgebras of the cluster algebras categorified by these categories.

\subsection{Cluster structures in cotorsion pairs}

Any extension closed subcategory of an extriangulated category is an extriangulated category, we can talk about cluster tilting subcategories in it.
\begin{definition}
Let $\X$ be a contravariantly finite (or covariantly finite) extension closed
subcategory of an extriangulated category $\C$.
{\red A functorially finite subcategory $\cal R$ of $\X$ is called an $\X$-cluster tilting subcategory provided that for an object $R$ of $\X$, $R\in\cal R$ if and only if $\E(R,X)=0$ for any object $X\in\X$ if and only if $\E(X,R)=0$ for any object $X\in\X$.}
An object $R$ in $\X$ is called an $\X$-cluster tilting object if $\add R$ is an $\X$-cluster tilting subcategory.
\end{definition}

The relation on cluster tilting subcategory between $\C$ and its cotosion pair $(\X,\Y)$ is given by the following:

 \begin{proposition}\label{decomp of cto}
Let $\C$ be a $2$-Calabi-Yau extriangulated category category with a cluster tilting object, and  $(\X,\Y)$ be a cotorsion pair in $\C$ with core $\I$. Then
\begin{itemize}
\item[\emph{1}.] Any cluster tilting subcategory $\T$ containing $\I$ can be written uniquely as: $\T=\T_{\X}\oplus \I\oplus  \T_{\Y}$,
such that $ \T_{\X}\oplus \I$ is $\X$-cluster tilting, and $ \T_{\Y}\oplus \I$ is $\Y$-cluster tilting.

\item[\emph{2}.]  Any $\X$-cluster tilting subcategory (or $\Y$-cluster tilting subcategory) contains $\I$, and can be written as $\T_{\X}\oplus \I$ ( $\T_{\Y}\oplus \I$ resp.).
Furthermore $ \T_{\X}\oplus \I\oplus  \T_{\Y}$ is a cluster tilting subcategory in $\C$.

\item[\emph{3}.]  There is a bijection between the set of cluster tilting subcategories containing $\I$ in $\C$ and the product of the set of $\X$-cluster tilting subcategories with the set of $\Y$-cluster tilting
  subcategories. The bijection is given by $ \T\mapsto (\T_{\X}\oplus \I,  \T_{\Y}\oplus \I), $   where $\T=\T_{\X}\oplus \I\oplus \T_{\Y}$.
\end{itemize}
\end{proposition}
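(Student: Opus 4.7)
My plan is to transfer the problem to the $2$-Calabi-Yau triangulated quotient $\cal N:={}^{\perp_\E}\I/\I$. By Proposition \ref{prop:decomposition of category}, $\cal N=\X/\I\oplus\Y/\I$ as triangulated categories. Since $\I$ is functorially finite, rigid, contains $\proj(\C)$, and $\I^{\perp_\E}={}^{\perp_\E}\I$ by $2$-Calabi-Yau duality, Theorem \ref{z8} supplies a bijection
$$\T\longleftrightarrow\overline{\T}:=\T/\I$$
between cluster tilting subcategories of $\C$ containing $\I$ and cluster tilting subcategories of $\cal N$; note that any such $\T$ is rigid and contains $\I$, hence $\T\subseteq{}^{\perp_\E}\I$ so $\overline{\T}$ is well defined.

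For part 1, I would first split any cluster tilting subcategory $\overline{\T}$ of $\cal N$ as $\overline{\T}=\overline{\T}_\X\oplus\overline{\T}_\Y$ with $\overline{\T}_\X$ cluster tilting in $\X/\I$ and $\overline{\T}_\Y$ cluster tilting in $\Y/\I$; in a direct sum of triangulated categories, rigidity and the two-term resolutions of Remark \ref{y6} split across the summands. Lifting through $\pi\colon{}^{\perp_\E}\I\to\cal N$ and using Krull--Schmidt yields unique additive subcategories $\T_\X\subseteq\X$, $\T_\Y\subseteq\Y$, with no indecomposable summand in $\I$, such that $\T=\T_\X\oplus\I\oplus\T_\Y$ and $\pi^{-1}(\overline{\T}_\X)=\T_\X\oplus\I$, $\pi^{-1}(\overline{\T}_\Y)=\T_\Y\oplus\I$. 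To verify that $\T_\X\oplus\I$ is $\X$-cluster tilting, I would first obtain its functorial finiteness in $\X$ from Lemma \ref{z7} applied to $\I\subseteq\T_\X\oplus\I\subseteq\X$ (using that $\I$ is functorially finite in $\X$, inherited from $\C$, and that $\overline{\T}_\X$ is functorially finite in $\X/\I$); then check the rigidity characterization directly: for $R\in\X$ with $\E(R,\T_\X\oplus\I)=0$, observe that $\E(R,\T_\Y)\subseteq\E(\X,\Y)=0$, so $\E(R,\T)=0$ and $R\in\T$; since $\T_\Y\cap\X\subseteq\Y\cap\X=\I$ while $\T_\Y$ is disjoint from $\I$, this forces $R\in\T_\X\oplus\I$. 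The dual criterion follows symmetrically via $2$-Calabi-Yau duality, and the same argument handles $\T_\Y\oplus\I$ as $\Y$-cluster tilting.

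For part 2, any $\X$-cluster tilting subcategory $\cal R$ must contain $\I$ because every $I\in\I$ is projective-injective in $\X$ (indeed $\E(\X,I)\subseteq\E(\X,\Y)=0$ and $\E(I,\X)\simeq D\E(\X,I)=0$), so $\cal R=\T_\X\oplus\I$ for some $\T_\X\subseteq\X$ disjoint from $\I$; the same on the $\Y$-side. To see that $\T:=\T_\X\oplus\I\oplus\T_\Y$ is then cluster tilting in $\C$, I verify the characterization of Remark \ref{y6}. Rigidity across the two sides follows from $\E(\T_\X,\T_\Y)\subseteq\E(\X,\Y)=0$ and its $2$-Calabi-Yau dual $\E(\T_\Y,\T_\X)=0$, while rigidity inside each of $\T_\X\oplus\I$ and $\T_\Y\oplus\I$ is built in. For the two-term resolution of an arbitrary $C\in\C$, I would start with a cotorsion pair $\E$-triangle $Y\to U\to C\dashrightarrow$ with $U\in\X$, $Y\in\Y$, then resolve $U$ using $\T_\X\oplus\I$ on the $\X$-side and $Y$ dually using $\T_\Y\oplus\I$ on the $\Y$-side, and splice via axioms (ET4) and (ET4)$^{\op}$ to obtain the required $\T$-resolution of $C$. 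Part 3 is then a purely formal consequence of parts 1 and 2, the inverse map sending $(\cal R_\X,\cal R_\Y)$ to the additive hull $\cal R_\X+\cal R_\Y$.

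\textbf{Main obstacle.} The intricate step is the splicing construction in part 2: assembling the $\X$- and $\Y$-side approximations of $U$ and $Y$ into a single two-term $\T$-resolution of $C$ requires a careful octahedral-type diagram chase via (ET4) and (ET4)$^{\op}$, together with the $2$-Calabi-Yau orthogonality to ensure the middle terms land in $\T$. The decomposition in part 1 is formally clean but rests on the appropriate $\Hom$- and $\E$-vanishing between the two triangulated summands of $\cal N$, and on Lemma \ref{z7} to transfer functorial finiteness back from $\X/\I$ to $\X$.
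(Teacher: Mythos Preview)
Your approach is correct and aligned with the paper's, which simply records that the proof of Proposition~5.5 in \cite{zz2} (the triangulated version of this statement) carries over verbatim once one has Theorem~\ref{z1}; your transfer to $\cal N={}^{\perp_\E}\I/\I$ via Proposition~\ref{prop:decomposition of category} and Theorem~\ref{z8} is precisely that strategy made explicit.

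One comment on your ``main obstacle'': the (ET4)-splicing in part~2 is unnecessary and can be bypassed by staying in the quotient throughout. Since $(\X,\X)$ is an $\I$-mutation pair (observed in the proof of Proposition~\ref{prop:decomposition of category}) and $\X$ is extension closed, Lemma~\ref{e5} and Lemma~\ref{z7} give a bijection between $\X$-cluster tilting subcategories (all of which contain $\I$, exactly as you argue) and cluster tilting subcategories of the triangulated summand $\X/\I$; likewise for $\Y$. Since cluster tilting in a direct sum of triangulated categories decomposes componentwise, composing these two bijections with Theorem~\ref{z8} yields part~3 directly, and parts~1 and~2 are then immediate consequences of the bijection rather than requiring a separate diagram chase. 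So what you flag as the delicate step is in fact the one place where your route diverges from the cleanest line; everything else in your proposal is on target.
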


\proof The proof of Proposition 5.5 in \cite{zz2} works also in this setting with the help of
Theorem \ref{z1}.   \qed

Recall that a quiver is a quadruple $(Q_0,Q_1,s,t)$ consisting of two sets: $Q_0$ (the set of vertices) and $Q_1$ (the set of arrows), and of two maps $s, t$ which map each arrow $\alpha \in Q_1$ to its source $s(\alpha)$ and its target $t(\alpha)$, respectively. An ice quiver is a quiver $Q$ associated a subset $F$ (the set of frozen vertices) of its vertex set. The full subquiver of $Q$ with vertex set $Q_0\setminus F$ (the set of exchangeable vertices) is call the exchangeable part of $Q$.

\begin{definition}\label{def£ºquiver of category}
 Let $\C$ be a $2$-Calabi-Yau extriangulated category  with cluster tilting objects.
\begin{enumerate}
\item For a cluster tilting subcategory $\T$ in a $2$-Calabi-Yau extriangulated category $\C$, we define $Q(\T)$ as an ice quiver whose exchangeable vertices are the isomorphism classes of indecomposable objects in $\T$ which are not projective objects and the frozen vertices are the isomorphism classes of indecomposable projective objects. For two vertices  $T_i$ and $T_j$ (not both the frozen vertices), the number of arrows from $T_i$ to $T_j$ is the dimension of irreducible morphism space $irr(T_i,T_j)=rad(T_i,T_j)/rad^2{(T_i,T_j)}$ in $\T$.
\item For {\red an} $\X$-cluster tilting subcategory $\T_{\X}\oplus \I$ in $\X$, we define $Q(\T_{\X}\oplus \I)$ as an ice quiver whose exchangeable vertices are the isomorphism classes of indecomposable objects in $\T_\X$ and the frozen vertices are the isomorphism classes of indecomposable objects in $\I$. For two vertices  $T_i$ and $T_j$ (not both the frozen vertices), the number of arrows from $T_i$ to $T_j$ is the dimension of irreducible morphism space $irr(T_i,T_j)$ in $\T_{\X}\oplus \I)$. The quiver $Q(\T_{\Y}\oplus \I)$ of $\Y$-cluster tilting subcategory is defined similarly.
\item For a cluster tilting subcatgory $\overline{\T}$ in $\overline{\C}$, we define $Q(\overline{\T})$ as a quiver whose (exchangeable) vertices are the isomorphism classes of indecomposable objects of $\overline{\T}$. For two vertices  $\overline{T_i}$ and $\overline{T_j}$, the number of arrows from $\overline{T_i}$ to $\overline{T_j}$ is the dimension of irreducible morphism space $irr(\overline{T_i},\overline{T_j})$ in $\overline{\T}$.
\end{enumerate}
\end{definition}

The cluster structure in a $2$-Calabi-Yau triangulated category or in an exact stably $2-$Calabi-Yau category is defined in \cite{birs} and \cite{fk}. This structure is given by cluster tilting subcategories and `categorifies' the cluster algebra associated to the quivers of the cluster tilting subcategories. Then one can use a cluster map \cite{birs} to transform a cluster structure in these categories to the cluster algebra. The cluster map is also called the cluster character in \cite{p}. In our setting, we similarly define a cluster structure in $2$-Calabi-Yau extriangualted categories with cluster tilting subcategories and assume that there always {\red is} a cluster map from the cluster structure to a cluster algebra. We omit the related definitions and the detailed discussions, and refer to \cite{birs} and \cite{fk} for more details. However, we have the following equivalent description of {\red a} cluster structure, where the cases of the triangulated category and the exact category are proved in Theorem II 1.6 of \cite{birs}.

\begin{theorem}\label{thm of cluster structure}
Let $\C$ be a $2$-Calabi-Yau extriangulated category with cluster tilting objects.
If $\C$ has no loops or $2$-cycles (this means the quivers of any cluster tilting objects contains neither loops nor $2$-cycles), then the cluster tilting subcategories determine a
cluster structure for $\C$.
\end{theorem}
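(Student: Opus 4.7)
The plan is to verify that the cluster tilting subcategories of $\C$ satisfy the axioms of a cluster structure in the sense of Buan--Iyama--Reiten--Scott, under the hypothesis that quivers of cluster tilting subcategories have no loops or $2$-cycles. The key technical inputs are the mutation theory developed in Section 3 (Theorem \ref{thmz11}, Corollary \ref{z12}, and the exchange $\E$-triangles constructed immediately afterwards) together with a reduction to the triangulated case via the stable category.

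First, I will verify the mutation/uniqueness axiom: for any cluster tilting subcategory $\T$ and any non-projective indecomposable summand $T_k$ of $\T$, there is a unique indecomposable $T_k^*\not\simeq T_k$ such that $\mu_{T_k}(\T):=(\T\setminus\add T_k)\oplus\add T_k^*$ is again cluster tilting. This is exactly Corollary \ref{z12} applied to the almost complete cluster tilting subcategory $\X:=\T\setminus\add T_k$, which contains $\proj(\C)$ by Remark \ref{y6}. The two exchange $\E$-triangles
$$\xymatrix{T_k\ar[r]&B\ar[r]&T_k^{\ast}\ar@{-->}[r]&}, \quad \xymatrix{T_k^{\ast}\ar[r]&B'\ar[r]&T_k\ar@{-->}[r]&}$$
defined at the end of Section 3 then supply the required exchange conflations, with $B,B'\in\add(\T\setminus\add T_k)$ by construction as minimal $\X$-approximations. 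This gives the ``weak'' part of the cluster structure.

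Next, I will reduce the quiver-mutation axiom to the known triangulated case. Since $\C$ has a cluster tilting object and is $2$-Calabi-Yau, the argument at the start of the proof of Proposition \ref{prop:decomposition of category} shows $\proj(\C)=\inj(\C)$, so $\C$ is Frobenius, and the stable category $\overline{\C}$ is a $2$-CY triangulated category with cluster tilting objects by Lemma \ref{one-to-one}. By that same lemma, cluster tilting subcategories of $\C$ containing $\proj(\C)$ correspond bijectively to cluster tilting subcategories of $\overline{\C}$, and the exchange $\E$-triangles above descend to the exchange triangles in $\overline{\C}$. The ice quiver $Q(\T)$ and the quiver $Q(\overline{\T})$ differ only by the frozen vertices (the indecomposable projective-injectives) and the arrows incident to them, since morphisms factoring through projective-injectives are killed in $\overline{\C}$ but irreducible maps between non-projective summands of $\T$ are preserved. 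In particular, the absence of loops and $2$-cycles in $Q(\T)$ among exchangeable vertices is equivalent to the absence of loops and $2$-cycles in $Q(\overline{\T})$.

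Finally, Theorem II.1.6 of \cite{birs} applied to the $2$-CY triangulated category $\overline{\C}$ yields that its cluster tilting subcategories determine a cluster structure; in particular, categorical mutation at an exchangeable vertex is compatible with Fomin--Zelevinsky quiver mutation. Lifting this through the bijection of Lemma \ref{one-to-one} and carrying along the frozen (projective-injective) part of the ice quiver, we obtain the cluster structure on $\C$. The main obstacle, and the only place where care is required, is bookkeeping for the frozen vertices: one must check that arrows between two frozen vertices play no role in mutation (they do not, as frozen rows/columns are not mutated) and that arrows between a frozen and an exchangeable vertex are governed by the same irreducible-map calculation before and after passage to $\overline{\C}$ once the projective-injective summands are tracked separately. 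Once this is verified, the cluster-structure axioms hold in $\C$ and the theorem follows.
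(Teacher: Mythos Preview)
Your approach differs from the paper's. The paper simply says ``Similar to the proof of Theorem II.1.6 in \cite{birs}'', meaning the Buan--Iyama--Reiten--Scott argument is to be rerun \emph{directly} in the extriangulated setting, where the frozen (projective-injective) vertices are present throughout. You instead try to reduce to the coefficient-free triangulated case via the stable category $\overline{\C}$ and then lift back.

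The reduction handles the exchangeable part of the quiver correctly, but the step you flag as ``bookkeeping for the frozen vertices'' is a genuine gap, not just bookkeeping. Passage to $\overline{\C}$ annihilates the projective-injective objects, so arrows between frozen and exchangeable vertices are simply gone in $Q(\overline{\T})$; there is nothing to ``track separately'' on the triangulated side. To verify that these arrows transform according to Fomin--Zelevinsky mutation you must show that the projective-injective summands of the middle terms $B,B'$ in the exchange $\E$-triangles of $\C$ record precisely the arrows between $T_k$ (resp.\ $T_k^{\ast}$) and frozen vertices, and that under mutation these counts obey the FZ rule. This computation has to be done in $\C$ itself and is exactly the content of the direct BIRS argument for the Frobenius case; the detour through $\overline{\C}$ does not avoid it. So while your verification of the exchange axiom via Corollary~\ref{z12} is fine, the quiver-mutation axiom is not established by your reduction, and the paper's direct adaptation of the BIRS proof is the correct route.
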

\proof Similar to the proof of Theorem II.1.6 in \cite{birs}.\qed
\vspace{2mm}

From now {\red on} to the end of the paper, we fix the following settings. Let $\C$ be a $2$-Calabi-Yau extriangulated category with cluster tilting objects, and cluster tiling objects form a cluster structure. Let $(\X,\Y)$ be a cotorsion pair of $\C$ with core $\I$. Then by Proposition \ref{decomp of cto}, we can write a cluster tilting object $\T$ as $\T_\X\oplus \I \oplus \T_\Y$ with $\T_\X\oplus \I$ being $\X$-cluster tilting and {\red $\T_\Y\oplus \I$} being $\Y$-cluster tilting.

 \begin{proposition}
Under above settings,
\begin{enumerate}
\item the quiver $Q(\overline{\T})$ is obtained from $Q(\T)$ by deleting all the frozen vertices and arrows connected to these vertices.
\item the quiver $Q(\T_{\X}\oplus \I)$ is obtained from $Q(\T)$ by deleting all the isomorphism classes of indecomposable direct summands of $\T_{\Y}$ and arrows connected to these vertices, and freezing the isomorphism classes of indecomposable direct summands of $\I$.
\end{enumerate}
\end{proposition}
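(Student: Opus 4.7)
My plan is to handle parts (1) and (2) along parallel lines. In each case the vertex identification is immediate from the definitions, so the real content is the equality of arrow numbers, which reduces to comparing the irreducible-morphism spaces $\operatorname{irr}(T_i,T_j)=\operatorname{rad}(T_i,T_j)/\operatorname{rad}^2(T_i,T_j)$. Since the $\Hom$-spaces (and hence the first radicals) are preserved when moving from $\T$ to $\overline{\T}$ or from $\T$ to $\T_\X\oplus\I$, the task is to show that the $\operatorname{rad}^2$-ideals also agree; concretely, that every morphism which factors through an ``erased'' indecomposable already lies in $\operatorname{rad}^2$ of the retained additive category.

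For (1), $\overline{\C}$ is the quotient of $\C$ by the ideal $[\proj(\C)]$ of morphisms factoring through projective-injective objects, and projective objects in the triangulated category $\overline{\C}$ are zero; thus the frozen vertices of $Q(\T)$ disappear in $Q(\overline{\T})$ and the remaining vertices survive as the pairwise non-isomorphic indecomposable summands of the basic cluster tilting object $\overline{\T}$ of $\overline{\C}$ (using Lemma \ref{one-to-one}). For indecomposable non-projective-injective summands $T_i,T_j$ of $\T$ and any indecomposable projective-injective $P$, the relations $T_i\not\cong P\not\cong T_j$ force every morphism $T_i\to P$ (respectively $P\to T_j$) to lie in $\operatorname{rad}_\T$, hence $[\proj(\C)](T_i,T_j)\subseteq\operatorname{rad}^2_\T(T_i,T_j)$; passing to the quotient then gives $\operatorname{irr}_{\overline{\T}}(\overline{T_i},\overline{T_j})\cong\operatorname{irr}_\T(T_i,T_j)$, which is exactly the equality of arrow counts.

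For (2) the vertex and freezing statements follow from $\T=\T_\X\oplus\I\oplus\T_\Y$ together with $\proj(\C)\subseteq\I$. The central lemma to prove is: for every indecomposable $T_l\in\T_\Y$, the minimal right $(\T_\X\oplus\I)$-approximation $\pi\colon N\to T_l$ in $\C$ satisfies $N\in\I$ (and dually for minimal left approximations). I would prove this by decomposing $N=N_\X\oplus N_\I$ with $N_\X\in\add\T_\X$ and $N_\I\in\I$, and invoking Proposition \ref{prop:decomposition of category}: since $\X,\Y\subseteq{}^{\perp_\E}\I$ (as $\E(\X,\I)\subseteq\E(\X,\Y)=0$, and symmetrically for $\Y$ using the $2$-Calabi-Yau property) and ${}^{\perp_\E}\I/\I=\X/\I\oplus\Y/\I$ is a direct sum of triangulated categories, the restriction $\pi|_{N_\X}\colon N_\X\to T_l$ must factor through some $I_1\in\I$; this produces another right $(\T_\X\oplus\I)$-approximation $\tilde\pi\colon I_1\oplus N_\I\to T_l$ from an object of $\I$, and the minimality of $\pi$ identifies $N$ with a direct summand of $I_1\oplus N_\I$, forcing $N\in\I$.

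Granted the lemma, any composition $T_i\xrightarrow{\alpha}T_l\xrightarrow{\beta}T_j$ with $T_l$ indecomposable in $\T_\Y$ rewrites, through $\alpha=\pi\alpha'$, as $(\beta\pi)\alpha'\colon T_i\to N\to T_j$ with $N\in\I\subseteq\T_\X\oplus\I$. If $T_i\in\T_\X$ then the leg $\alpha'\colon T_i\to N$ is automatically in $\operatorname{rad}_{\T_\X\oplus\I}$ (source and target lie in different isomorphism classes), and if in addition $T_j\in\I$ then $\beta\pi\colon N\to T_j$ cannot be an isomorphism, for otherwise $(\beta\pi)^{-1}\beta$ would be a retraction of $\pi$, making $\pi$ a split monomorphism into the indecomposable $T_l\in\T_\Y$ and forcing $N\cong T_l\in\I\cap\T_\Y=0$, a contradiction; hence both legs are radical. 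The symmetric case $T_i\in\I,\,T_j\in\T_\X$ is handled identically using the minimal left $(\T_\X\oplus\I)$-approximation of $T_l$, and the case $T_i,T_j\in\I$ is excluded by ``not both frozen''. This gives $\operatorname{rad}^2_\T(T_i,T_j)=\operatorname{rad}^2_{\T_\X\oplus\I}(T_i,T_j)$, whence the arrow counts coincide. The main obstacle is the key lemma: extracting, from the categorical direct-sum decomposition of ${}^{\perp_\E}\I/\I$, a genuine $\Hom$-factorisation through $\I$ and feeding it into a minimality comparison; everything else is routine bookkeeping with radicals in a Krull-Schmidt category.
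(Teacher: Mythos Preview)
Your proposal is correct and follows essentially the same line as the paper's proof: both parts reduce to showing that the $\operatorname{rad}^2$-ideals agree, and for part~(2) both rely on Proposition~\ref{prop:decomposition of category} to replace factorizations through $\T_\Y$ by factorizations through $\I$. The paper is more direct in part~(2): rather than proving your lemma that the minimal $(\T_\X\oplus\I)$-approximation of $T_l\in\T_\Y$ lies in $\I$, it simply observes that any map $T_i\to T_l$ with $T_i\in\T_\X$ already factors through some $I_1\in\I$ by the decomposition, and concludes immediately. Your approximation lemma is a clean statement, but it is more machinery than the argument requires.

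One minor wording issue: in your treatment of the case $T_j\in\I$, you argue that $\beta\pi\colon N\to T_j$ ``cannot be an isomorphism''. Since $N$ need not be indecomposable, what you actually need is that $\beta\pi$ is not a \emph{split epimorphism} (equivalently, lies in the radical). The fix is immediate: a section $s$ of $\beta\pi$ would give $\beta(\pi s)=1_{T_j}$, so $\beta\colon T_l\to T_j$ would be a split epimorphism between indecomposables, hence an isomorphism, contradicting $T_l\in\T_\Y$ and $T_j\in\I$. The paper's proof also glosses over why this leg is radical, so your version is in fact more careful on this point.
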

\begin{proof}
\begin{enumerate}
\item Since an object becomes zero object in $\overline{\C}$ if and only if it belongs to the projective subcategory $Proj$, the vertices of $Q(\overline{\T})$ are just the exchangeable vertices of $Q(\T)$. It can be  directly derived from the homomorphism theorem of groups that, for any two exchangeable vertices $T_1$ and $T_2$ in $Q(\T)$, the dimension of $irr(T_1,T_2)$ in $\C$ is equal to the dimension of $irr(\overline{T_1},\overline{T_2})$ in $\overline{\C}$. Then by the  the Definition \ref{def£ºquiver of category} of the quivers, we are done.
\item We only need to show that for any two vertices $T_1$ and $T_2$ of $Q(\T_{\X}\oplus \I)$ (not both frozen vertices), the dimension of  $irr(T_1,T_2)$ in $\T$ is the same as the dimension of $irr(T_1,T_2)$ in $\T_\X\oplus \I$, or equivalently, a map $f$ from $T_1$ to $T_2$ is irreducible in $\T$ if and only if it is irreducible in $\T_\X\oplus \I$. For the convenience, we assume that $T_1$ is an indecomposable direct summand of $\T_\X$. It is clear that if $f$ is irreducible in $\T$, then it is irreducible in $\T_\X\oplus \T$. Conversely, for an irreducible map $f$ in $\T_\X\oplus \I$, if it is not irreducible in $\T$, then it factor through an object $T_3$ in $\T_\Y$. We write $f$ as a composition of $f_1\colon T_1\rightarrow T_3$ and $f_2\colon T_3\rightarrow T_2$. By Proposition \ref{prop:decomposition of category}, $f_1$ factors through an object $I_1$ in $\I$, and we write it as a composition $g_1\colon T_1\rightarrow I_1$ and $g_2\colon I_1 \rightarrow T_3$. Note that $g_1$ and $f_2g_2$ are both in the radical space of $\X\oplus \I$, this is contradict to the assumption that $f$ is irreducible in {\red $\T_\X\oplus \I$}. Therefore $f$ is irreducible in $\T$.
\end{enumerate}
\end{proof}

\begin{lemma}
Under above settings, there are cluster structures in $\overline{\C}$, $\X$ and $\Y$, which are induced from the cluster structure of $\C$. We call the cluster structures in $\X$ and $\Y$ the cluster {\red substructures} of the cluster structure in $\C$.
\end{lemma}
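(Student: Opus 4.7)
The plan is to verify the cluster structure axioms in each of $\overline{\C}$, $\X$, and $\Y$ by transferring them from $\C$ using Theorem \ref{thm of cluster structure}, which reduces the task (under the existence of cluster tilting objects) to checking the absence of loops and $2$-cycles in the quivers of cluster tilting subcategories.

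First I would handle $\overline{\C}$. By Corollary \ref{corollary:correspondence between cotorsion pairs} applied with $\cal U=\cal V=\C$ and the Frobenius hypothesis, $\overline{\C}$ is a $2$-Calabi-Yau triangulated category; by Lemma \ref{one-to-one} its cluster tilting subcategories are in bijection with the cluster tilting subcategories of $\C$ containing $\proj(\C)$. The preceding proposition identifies $Q(\overline{\T})$ with $Q(\T)$ with all frozen vertices (and the arrows touching them) removed, so any loop or $2$-cycle in $Q(\overline{\T})$ would lift to a loop or $2$-cycle in $Q(\T)$, contradicting the hypothesis that cluster tilting subcategories form a cluster structure in $\C$. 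Theorem \ref{thm of cluster structure} (in its triangulated form) then yields the induced cluster structure on $\overline{\C}$.

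Next I would treat $\X$ (the case of $\Y$ being dual). Since $\X$ is extension closed in $\C$ it inherits an extriangulated structure, and Proposition \ref{decomp of cto} shows that its $\X$-cluster tilting subcategories are precisely the summands $\T_\X\oplus\I$ of decompositions $\T=\T_\X\oplus\I\oplus\T_\Y$ of cluster tilting subcategories of $\C$ containing $\I$. The preceding proposition presents $Q(\T_\X\oplus\I)$ as $Q(\T)$ with the $\T_\Y$-vertices deleted and the $\I$-vertices frozen, so again any loop or $2$-cycle among non-frozen vertices of $Q(\T_\X\oplus\I)$ would lift to one in $Q(\T)$, which is forbidden. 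To invoke Theorem \ref{thm of cluster structure} in $\X$ I also need the mutation/exchange property: given an almost complete $\X$-cluster tilting subcategory $\X'\subseteq\T_\X\oplus\I$, I complete it with $\T_\Y$ to obtain an almost complete cluster tilting subcategory of $\C$, apply Theorem \ref{thmz11} to get the two complements $Q_0,R_0$ together with the exchange $\E$-triangles, and then check these $\E$-triangles live in $\X$ because both complements are objects of $\X$ and the middle terms, being $\X'$-approximations, are direct sums of summands of $\T_\X\oplus\I\subseteq\X$.

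The main obstacle I anticipate is the last point: verifying that the exchange $\E$-triangles produced by Theorem \ref{thmz11} for an $\X$-almost complete cluster tilting subcategory indeed have all three terms in $\X$ (equivalently, that the two complements provided by Theorem \ref{thmz11} in $\C$ both land in $\X$ rather than in $\T_\Y$). This should be handled by noting that a complement lying in $\T_\Y$ would, together with $\I$, be $\Y$-rigid, and that the decomposition $\pi^{-1}\bigl(\X/\I\oplus\Y/\I\bigr)$ from Proposition \ref{prop:decomposition of category} forces both complements of an $\X'\subseteq\X$ to remain in $\X$. Once this is settled, Theorem \ref{thm of cluster structure} gives the cluster structure on $\X$, and the dual argument gives it on $\Y$.
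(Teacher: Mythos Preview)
Your core approach matches the paper's exactly: use the preceding proposition to see that the quivers of cluster tilting subcategories in $\overline{\C}$, $\X$, and $\Y$ are obtained from those in $\C$ by deleting and/or freezing vertices, hence have no loops or $2$-cycles, and then invoke Theorem~\ref{thm of cluster structure}. The paper's proof is literally this two-sentence argument, citing Proposition~\ref{decomp of cto} and the preceding proposition.

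Your third paragraph and the ``obstacle'' you raise are unnecessary. Theorem~\ref{thm of cluster structure} does not require verifying the mutation/exchange property as a separate hypothesis --- that is part of its \emph{conclusion}. Once $\X$ is recognized as a $2$-Calabi-Yau extriangulated category in its own right (it is extension closed in $\C$, so $\E$ restricts; it has $\I$ as projective-injectives via the $\I$-mutation pair noted in the proof of Proposition~\ref{prop:decomposition of category}) possessing cluster tilting objects (Proposition~\ref{decomp of cto}) and no loops or $2$-cycles, Theorem~\ref{thm of cluster structure} applies directly. The exchange $\E$-triangles then arise internally in $\X$ from Theorem~\ref{thmz11}, with no need to lift to $\C$ and chase complements back down.
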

\begin{proof}
It follows from Proposition \ref{decomp of cto} and the above proposition that the categories $\overline{\C}$, $\X$ and $\Y$ have no loops or $2$-cycles. It follows from Theorem \ref{thm of cluster structure} that all of these categories, as extriangulated categories, have cluster structures, which are induced from the cluster structure of $\C$.
\end{proof}

\section{Relation between cotorsion pairs and pairs of rooted cluster subalgebras}

In this section, we study cotorsion pairs in a $2$-Calabi-Yau extriangulated category with cluster tilting object in the point of view categorification. In the first subsection, we recall some basics on the cluster algebras introduced in \cite{fz} and the rooted cluster algebras introduced in \cite{ads}. In the second subsection, we give a correspondence between cotorsion pairs with certain pairs of rooted cluster subalgebras.
\subsection{Rooted cluster subalgebras}
 We assume in this section that there are no loops or 2-cycles, and no arrows between frozen vertices in the ice quivers. Let $m+n=|Q_0|$ the number of vertices in an ice quiver $Q$, and denote vertices by $Q_0=\{1, 2, \cdot \cdot \cdot, m+n\}$ and frozen vertices by $F=\{m+1, m+2, \cdot \cdot \cdot, m+n\}$. By associating each vertex $1\leqslant i \leqslant m+n$ an indeterminate element $x_i$, we have a set $\x=\{x_1, x_2, \cdot \cdot \cdot, x_{m+n}\}$. Denote by  $\ex=\{x_1, x_2, \cdot \cdot \cdot, x_{m}\}$ and $\fx=\{x_{m+1}, x_{m+2}, \cdot \cdot \cdot, x_{m+n}\}$. The cluster algebra $\A_Q$ is a $\Z-$subalgebra of the rational function field $\F=\Q(x_1,\cdot \cdot \cdot, x_{m+n})$ generated by generators obtained recursively from $\x$ in the following manner. For an exchangeable vertex $1\leq i \leq m$, we obtain a new triple $(\mu_i(\ex), \fx, \mu_i(Q))$ from $(\ex, \fx, Q)$ by a mutation $\mu_i$ at $i$ defined as follows.

Firstly, $\mu_i(Q)=(Q')$ is obtained by:

{\red
\begin{itemize}
\item[(a)] inserting a new arrow $\gamma: j\to k$ for each path $j\stackrel{\alpha}\rightarrow i \stackrel{\beta}\rightarrow k$;
\item[(b)] reversing all arrows incident with $i$;
\item[(c)] removing a maximal collection of pairwise disjoint 2-cycles and removing arrows between frozen vertices.
\end{itemize}
}

Secondly, $\mu_i(\ex)=(\ex\setminus\{x_i\}) \cup \{x_i^\prime\}$ where $x_i^\prime\in{\F}$ is defined by the following exchange relation:
\begin{equation*}
x_i x_i^\prime = \prod_{\alpha:i\to j} x_j + \prod_{\alpha:j\to i} x_j.
\end{equation*}
Denote by ${\X}$ the union of all possible sets of variables obtained from $\x$ by successive mutations. Then the cluster algebra ${\A_Q}$ is the $\mathbb{Z}$-subalgebra of ${\F}$ generated by ${\X}$. We call each triple $\tilde{\S}=(\tilde{\ex}, \tilde{\fx}, \tilde{Q},)$ obtained from $\S=(\ex, \fx, Q)$ by successive mutations a seed, and $\tilde{\x}=\tilde{\ex}\sqcup\tilde{\fx}$ a cluster.
The elements $\tilde{x}_1,\dots, \tilde{x}_{m+n}$ of a cluster $\tilde{\x}$ are cluster variables. The variables in the subset $\tilde{\ex}$ are exchangeable cluster variables and the variables in the subset $\tilde{\fx}$ are frozen cluster variables.
A rooted cluster algebra associated to $\A_Q$ is a pair $(\S,\A_Q)$ (or $\A_\S$ for brevity).

For the aim of constructing a category frame work of studying cluster algebras, rooted cluster morphisms are introduced in \cite{ads}. They are special ring homomorphisms between rooted cluster algebras which are compatible with cluster mutations.
A sequence $(x_1, \cdots , x_l)$ is called $(\ex, \fx, Q)$-admissible if $x_1\in \ex$ and $x_i$ is in $\mu_{x_{i-1}} \circ \cdots \circ \mu_{x_1}(\ex)$  for every $2 \leqslant i \leqslant l$. A rooted cluster morphism $f$ from $\A_\S$ to $\A_{\S'}$ is a ring homomorphism satisfies the following three conditions:
\begin{itemize}
\item[(a)] $f(\ex) \subset \ex' \sqcup \Z$~;
\item[(b)] $f(\fx) \subset \x' \sqcup \Z$~;
\item[(c)] For every $(f,\x,\x')$-biadmissible sequence $(x_1, \cdots , x_l)$, we have $f(\mu_{x_l} \circ \cdots \circ \mu_{x_1,\x}(y)) = \mu_{f(x_l)} \circ \cdots \circ \mu_{f(x_1),\x'}(f(y))$ for any $y$ in $\x$. Here a $(f,\x,\x')$-biadmissible sequence $(x_1, \cdots , x_l)$ is a $\S$ -admissible sequence such that $(f(x_1),\cdots ,f(x_l))$ is $\S'$ -admissible.
\end{itemize}

\begin{definition}
We call $\A_\S$ a rooted cluster subalgebra of $\A_{\S'}$ if there is an injective rooted cluster morphism from $\A_\S$ to $\A_{\S'}$.
\end{definition}
We collect some definitions and results from \cite{cz} to introduce the complete pair of a rooted cluster algbera.

\begin{defn-prop}\label{defn-prop:cluster subalgebras}
Let $Q$ be an ice quiver.
\begin{itemize}
\item[(a)] We call $Q$ indecomposable if it is connected and its exchangeable part is also connected. We call a seed indecomposable if its quiver is indecomposable.
\item[(b)] We call a full subquiver $Q'$ of $Q$ a connected component if the exchangeable part is connected and the frozen vertices are all the frozen vertices in $Q$ which are connected directly to the exchangeable vertices of $Q'$.
\item[(c)] Let $\S=(\ex, \fx, Q)$ be a seed and $\ex'$ be a subset of $\ex$. We call $\S_f=(\ex\setminus \ex', \fx\sqcup \ex', Q_f)$ the freezing of $\S$ at $\ex'$, where $Q_f$ is obtained from $Q$ by freezing vertices corresponding to elements in $\ex'$ and deleting the arrows between these vertices.
\item[(d)] Let $\S_1=(\ex_1, \fx_1, Q_1)$ and $\S_2=(\ex_2, \fx_2, Q_2)$ be two seeds. Assume that there is a bijection between $\fx_1$ and $\fx_2$, then we obtain a quiver $Q$ by gluing $Q_1$ and $Q_2$ (as graphs) together at frozen vertices unified under this bijection. A gluing of $\S_1$ and $\S_2$ is a seed $\S=(\ex, \fx, Q)$, where $\ex=\ex_1 \sqcup \ex_2$ and $\fx=\fx_1$ (or equivalently $\fx_2$).
\item[(e)] Let $\S$ be a seed and $\S_f$ be a freezing of $\S$. Then $\A_{\S_f}$ is a rooted cluster subalgebra of $\A_\S$. Let $\S'$ be a gluing of some connected components of $\A_{\S_f}$, then $\A_{\S'}$ is a rooted cluster subalgebra of $\A_{\S_f}$, and thus a rooted cluster subalgebra of $\A_\S$. Conversely, all the rooted cluster subalgebras of $\A_\S$ is obtained from the above two ways. For more details, see \cite{cz}.
\end{itemize}
\end{defn-prop}

\begin{definition}\label{def of rooted cluster algebra}
Let $\S=(\ex, \fx, Q)$ be a seed and $\S_f$ be the freezing of $\S$ at a subset $\ex'$ of $\ex$. Denote by $\fx_0$ the set of isolated frozen variables of $\S_f$. Let $\S_1=(\ex_1, \fx_1, Q_1)$ and $\S_2=(\ex_2, \fx_2, Q_2)$ be two seeds. We call the pair $(\A(\S_1), \A(\S_2))$ a complete pair of subalgebras of $\A(\S)$ with coefficient set $\fx \sqcup \ex'$ if the following conditions are satisfied:
\begin{enumerate}
\item both $\S_1$ and $\S_2$ are gluings of some indecomposable components of $\S_f$;
\item $\ex_1\cap \ex_2 = \emptyset$ and $\ex = \ex_1 \sqcup \ex_2 \sqcup \ex'$;
\item $\fx\cup\fx_0\subseteq \fx_1\cap\fx_2$.
\end{enumerate}
\end{definition}




\subsection{Relations with cluster algebras}

Let $\C$ be a $2$-Calabi-Yau extriangulated category with (non-zero) cluster tilting objects. Fix a cluster tilting object $T$,
{\red we denote by $\T$ the full subcategory of $\C$ additive generated by $T$.} Let $(\X,\Y)$ be a cotorsion pair with core $\I$.
We denote the rooted cluster algebras corresponding to the quivers $Q(\T)$, $Q(\overline{\T})$, $Q(\T_{\X}\oplus \I)$ and $Q(\T_{\Y}\oplus \I)$ as $\A(\T)$, $\A(\overline{\T})$, $\A(\T_\X\oplus \I)$ and $\A(\T_\Y\oplus \I)$ respectively.

\begin{theorem}\label{thm of Last}
Let $\C$ be a  $2$-Calabi-Yau extriangulated category with cluster tilting objects, which form a cluster structure.{\red Assume that there is a cluster map $\chi$ from $\C$ to $\A(\T)$. Then $\chi$ induces a correspondence $(\X,\Y)\mapsto (\A(\T_{\X}\oplus \I),\A(\T_{\Y}\oplus \I))$, which gives a bijection:}
\begin{center}
\{cotorsion pairs in $\C$ with core $\I$\}\\
$\Updownarrow$\\
\{complete pairs of rooted cluster subalgebras of $\A(\T)$ with coefficient set $\chi(\I)$\}.
\end{center}
This bijection induces the following bijection:
\begin{center}
\{t-structures in $\C$\}\\
$\Updownarrow$\\
\{complete pairs of rooted cluster subalgebras of $\A(\T)$ with coefficients set $\chi(\pj)$\}.
\end{center}
\end{theorem}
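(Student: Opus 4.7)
The plan is to construct the bijection via the canonical decomposition of a cluster tilting object from Proposition \ref{decomp of cto}, combined with the cluster character $\chi$, and to reduce the less elementary verifications to the analogous correspondence for $2$-Calabi-Yau triangulated categories proved in \cite{cz}, by passing between $\C$ and its stable category $\overline{\C}$ via Corollary \ref{corollary:correspondence between cotorsion pairs}.

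For the forward map, given a cotorsion pair $(\X,\Y)$ with core $\I$, Proposition \ref{decomp of cto} yields $\T = \T_\X \oplus \I \oplus \T_\Y$, and the cluster map produces the seeds of $\A(\T_\X \oplus \I)$ and $\A(\T_\Y \oplus \I)$. By the proposition relating $Q(\T_\X \oplus \I)$ to $Q(\T)$ in the previous subsection, these seeds arise as gluings of indecomposable components of the freezing of the seed $\chi(\T)$ at $\chi(\I)$; hence by Definition-Proposition \ref{defn-prop:cluster subalgebras}(e) they give rooted cluster subalgebras of $\A(\T)$. The three conditions of Definition \ref{def of rooted cluster algebra} are then immediate: condition (1) is the gluing statement just verified, condition (2) follows from the direct-sum decomposition of $\T$, and condition (3) holds because any frozen vertex that becomes isolated after freezing at $\chi(\I)$ corresponds to a summand of $\I$ disconnected from both $\T_\X$ and $\T_\Y$ in $Q(\T)$, and therefore lies in $\fx_1 \cap \fx_2$.

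For the reverse map, given a complete pair $(\A(\S_1),\A(\S_2))$ with coefficient set $\chi(\I)$, Definition-Proposition \ref{defn-prop:cluster subalgebras}(e) expresses $\S_1$ and $\S_2$ as gluings of indecomposable components of the freezing of $\chi(\T)$ at $\chi(\I)$, so pulling back under $\chi$ yields a decomposition $\T = \T_\X \oplus \I \oplus \T_\Y$ in which both $\T_\X \oplus \I$ and $\T_\Y \oplus \I$ are cluster tilting subcategories of $\C$. One then recovers $\X$ and $\Y$ as the extension-closed subcategories in which these are respectively $\X$-cluster tilting and $\Y$-cluster tilting. To verify that $(\X,\Y)$ is a cotorsion pair with core $\I$, I would pass to $\overline{\C}$ via Corollary \ref{corollary:correspondence between cotorsion pairs}, where the corresponding assertion for $2$-Calabi-Yau triangulated categories with cluster tilting objects is the main theorem of \cite{cz}. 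That the two constructions are mutually inverse follows from the bijection in Proposition \ref{decomp of cto}(3). The induced bijection for t-structures is the specialization to $\I = \pj$, which forces the coefficient set to be $\chi(\pj)$.

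The main obstacle is the reverse direction: verifying that the subcategories $\X$ and $\Y$ reconstructed from combinatorial data actually form a cotorsion pair, rather than merely a pair of extension-closed subcategories with compatible cluster tilting summands. Matching the combinatorial notion of ``isolated frozen vertex'' with the categorical condition $\E(\X,\Y) = 0$ is the crux, and the stable-category reduction handles it cleanly by avoiding the need to track the coefficients from $\pj$ and from $\I \setminus \pj$ simultaneously.
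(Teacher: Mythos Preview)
Your forward direction has a gap at the crucial step. The proposition you cite about $Q(\T_\X\oplus\I)$ being obtained from $Q(\T)$ by deleting the $\T_\Y$-vertices and freezing $\I$ only tells you that $Q(\T_\X\oplus\I)$ is a certain full subquiver of the freezing $\S_f$; it does \emph{not} tell you that this subquiver is a gluing of indecomposable components of $\S_f$. For that you must know there are no arrows in $Q(\T)$ between vertices of $\T_\X$ and vertices of $\T_\Y$, and this is exactly what the paper extracts from Proposition~\ref{prop:decomposition of category}: the decomposition $^{\perp_\E}\I/\I=\X/\I\oplus\Y/\I$ forces every morphism between an indecomposable of $\T_\X$ and one of $\T_\Y$ to factor through $\I$, hence no irreducible morphisms between them in $\T$. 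Without this, condition~(1) of Definition~\ref{def of rooted cluster algebra} is unverified.

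Your reverse direction takes a genuinely different route from the paper. You propose to pass to the stable category $\overline{\C}$ and invoke the main theorem of \cite{cz} as a black box, then lift via Corollary~\ref{corollary:correspondence between cotorsion pairs}. The paper instead works directly in the subfactor $^{\perp_\E}\I/\I$: from the combinatorial data it gets $\T=\T'\oplus\I\oplus\T''$ with no arrows between $\T'$ and $\T''$, observes that in $^{\perp_\E}\I/\I$ the cluster tilting object $\T'\oplus\T''$ has $\Hom(\T',\T'')=\Hom(\T'',\T')=0$, and then uses \cite[Proposition~3.5]{zz2} to obtain a decomposition $^{\perp_\E}\I/\I=\C_1\oplus\C_2$ of triangulated categories, which immediately gives a cotorsion pair $(\C_1,\C_2)$ with zero core; the preimage under $\pi\colon{}^{\perp_\E}\I\to{}^{\perp_\E}\I/\I$ is the desired cotorsion pair in $\C$ (via Theorem~\ref{h2}). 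Your reduction to \cite{cz} can be made to work, but as written it is circular: you attempt to ``recover $\X$ and $\Y$ as the extension-closed subcategories in which $\T_\X\oplus\I$ and $\T_\Y\oplus\I$ are $\X$- and $\Y$-cluster tilting'' before you have constructed $\X$ and $\Y$. The clean version of your idea is to first project the complete pair along the deletion of the projective frozen vertices to obtain a complete pair in $\A(\overline{\T})$ with coefficient set $\chi(\overline{\I})$, then apply \cite{cz} in $\overline{\C}$ to get $(\overline{\X},\overline{\Y})$, and only then lift. The paper's approach is more self-contained (it does not need the full bijection of \cite{cz}, only the category-decomposition input from \cite{zz2}), while yours makes the dependence on the triangulated case explicit.
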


\begin{proof}
For a cotorsion pair $(\X,\Y)$, we have a decomposition of categories $^{\perp_\E}\I/\I=\X/\I\oplus \Y/\I$ from Proposition \ref{prop:decomposition of category}. Therefore morphisms between any two objects representing vertices $T_1 \in \T_\X$ and $T_2 \in \T_\Y$ factor through objects in $\I$. So there are no arrows between $T_1$ and $T_2$ in $Q(\T)$, and thus $Q(\T_{\X}\oplus \I)$ and $Q(\T_{\Y}\oplus \I)$ are both gluings of some indecomposable components of $Q(\T)$. The rest two conditions of complete pairs in Definition \ref{def of rooted cluster algebra} are clear. Therefore $(\A(\T_{\X}\oplus\I),\A(\T_{\Y}\oplus \I))$ is a complete pair of rooted cluster subalgebras of $\A(\T)$ with coefficient set $\chi(I)$. Conversely, if we have a complete pair $(\A',\A'')$ of rooted cluster subalgebras of $\A(\T)$.
From Proposition-Definition \ref{defn-prop:cluster subalgebras} (e) and the definition of complete pairs of rooted cluster subalgebras, the rooted cluster subalgebras $\A'$ and $\A''$ are of the forms $\A(\T'\oplus \I)$ and $\A(\T''\oplus \I)$ with $\T=\T'\oplus \I \oplus\T''$, and in the quiver $Q(\T)$, there are no arrows between vertices in $\T'$ and vertices in $\T''$. Now we consider the pair $(\T',\T'')$ in the subfactor category $^{\perp_\E}\I/\I$, which is a $2$-Calabi-Yau triangulated category by Proposition \ref{z6}. Note that $\T'\oplus\T''$ is a cluster tilting subcategory in $^{\perp_\E}\I/\I$ by Proposition \ref{decomp of cto} and we have $\Hom_{^{\perp_\E}\I/\I}(\T',\T'')=\Hom_{^{\perp_\E}\I/\I}(\T'',\T')=0$. Thus we have $^{\perp_\E}\I/\I=(\T'\oplus\T'')\ast (\T'\langle1\rangle\oplus\T''\langle1\rangle)=\T'\ast \T'\langle1\rangle \oplus \T''\ast \T''\langle1\rangle=\C_1\oplus \C_2$ as a decomposition of triangulated category by Proposition 3.5 in \cite{zz2}.  Let $\pi\colon ^{\perp_\E}I\rightarrow {^{\perp_\E}\I}/\I$ be the natural projection. Then because $(\C_1,\C_2)$ is a cotorsion pair in $^{\perp_\E}\I/\I$ with core $\{0\}$, $(\X',\Y')=(\pi^{-1}(\C_1),\pi^{-1}(\C_2))$ is a cotorsion pair in $\C$ with core $\I$ by Corollary \ref{corollary:correspondence between cotorsion pairs}. It is clear that the above two kinds of processes are the inverse processes with each other. Since the $t$-structures are those cotorsion pairs with cores $\pj$, the second correspondence is clear from the first one.
\end{proof}

\begin{remark} Applying the theorem to  $2$-Calabi-Yau triangulated categories with cluster tilting objects, we recover the corresponding results in \cite{cz}. The application to exact stably $2$-Calabi-Yau categories $\C$ with cluster tilting objects yields a new correspondence between cotorsion pairs in the categories $\C$ and the complete pairs of cluster subalgebras of the cluster algebras categorified by $\C$.  We state it here in the following:

\end{remark}

\begin{corollary}\label{car of Last}
Let $\C$ be an exact stably $2$-Calabi-Yau category with cluster tilting objects,
 which form a cluster structure, $\I$ a
rigid subcategory of $\C$ such that $\proj(\C)\subseteq\I$.
{\red Assume that there is a cluster map $\chi$ from $\C$ to $\A(\T)$. Then $\chi$ induces a correspondence $(\X,\Y)\mapsto (\A(\T_{\X}\oplus \I),\A(\T_{\Y}\oplus \I))$, which gives a bijection:}
\begin{center}
\{cotorsion pairs in $\C$ with core $\I$\}\\
$\Updownarrow$\\
\{complete pairs of rooted cluster subalgebras of $\A(\T)$ with coefficient set $\chi(\I)$\}.
\end{center}
\end{corollary}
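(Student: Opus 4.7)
The plan is to obtain Corollary 5.6 as a direct specialization of Theorem 5.5; nothing genuinely new has to be proved, and the task reduces to verifying that an exact stably $2$-Calabi--Yau category satisfies every hypothesis of that theorem, after which the conclusion, together with the displayed bijection, transfers verbatim.

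First I would recall, via Example 1.4(1), that any exact category $\C$ becomes an extriangulated category with $\E = \Ext^1_\C$ and $\s$ sending an extension class to the equivalence class of a representing short exact sequence. Since $\C$ is Frobenius by the definition of an exact stably $2$-Calabi--Yau category, it has enough projectives and enough injectives (which coincide), matching the standing hypothesis imposed after Example 1.11. The induced extriangulated structure is automatically Krull--Schmidt and $k$-linear under the running assumptions.

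Next I would check that $\C$ is $2$-Calabi--Yau in the sense of Definition 1.10. This is recorded as the first item of Example 1.11, and can be traced back to the functorial isomorphism $\E(A,B)\simeq \Hom_{\overline{\C}}(A,B\langle 1\rangle)$ noted at the start of Section 4.1 (an instance of Lemma 2.9 for $\X = \pj$): the $2$-Calabi--Yau duality on the stable triangulated category $\overline{\C}$ transports through this isomorphism to give a bifunctorial isomorphism $\E(A,B)\simeq\mathsf{D}\,\E(B,A)$ on $\C$. The remaining hypotheses in Corollary 5.6 --- existence of a cluster tilting object, the rigid subcategory $\I$ with $\proj(\C)\subseteq\I$, the cluster structure, and the cluster map $\chi\colon\C\to\A(\T)$ --- are \emph{identical} to those in Theorem 5.5. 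Therefore Theorem 5.5 applies, and its bijection becomes precisely the one asserted in the corollary.

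The only thing that warrants a brief comment, rather than a genuine obstacle, is compatibility of $\chi$ with the extriangulated formalism: one should note that the cluster map is defined intrinsically from the cluster tilting object $T$ via $\Hom$-spaces and short exact sequences in $\C$, all of which are preserved when viewing $\C$ as an extriangulated category, so $\chi$ is the same map in both settings. Consequently the image $\chi(\I)$ serving as the coefficient set in Corollary 5.6 agrees with the coefficient set $\chi(\I)$ appearing in Theorem 5.5, and the proof is complete.
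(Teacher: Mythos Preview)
Your proposal is correct and matches the paper's approach exactly: the paper does not give a separate proof for this corollary, merely stating (in the remark preceding it) that it is the application of Theorem~\ref{thm of Last} to exact stably $2$-Calabi--Yau categories. Your verification that such a category satisfies the hypotheses of Theorem~\ref{thm of Last} via Example~\ref{a4}(1) and Example~\ref{y2} is precisely what the paper leaves implicit, so there is nothing to add.
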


\section{An example}
Here we explain the correspondence in Theorem \ref{thm of Last} by an example relating to a Grassmannian cluster algebra. Recall that Fomin and Zelevinsky \cite{fz} proved that the homogenous coordinate ring of the Grassmannian $G_{2,n}$ of 2-planes in n-space has a cluster algebra structure. Scott \cite{s} generalized this result to each Grassmannian $G_{r,n}$ by using Postnikov arrangements. Since such a cluster algebra has non-trivial coefficients, the Frobenius category is the candidate to categorify it \cite{birs, fk}. In 2008, Gei{\ss}-Leclerc-Shr\"{o}er \cite{gls1} described such a category in the representation category of a preprojective algebra, which provided a {\red categorification} the cluster algebra structure on the coordinate ring of the big cell in the Grassmannian. Recently, by relating to this result, Jensen-King-Su \cite{jks} found a ring $A_{r,n}$ whose category of maximal Cohen-Macaulay modules is a Frobenius category, which categorified the coordinate ring of Grassmannian $G_{r,n}$. Specially, we consider the Frobenius category relating to $E_6$ type cluster algebra of $G_{3,7}$.

\begin{example}

Recall that under the Plucker embedding $G_{3,7}\rightarrow \mathbb{P}(\wedge^3\mathbb{C}^7)$, the coordinate ring $\mathbb{C}(G_{3,7})$ of $G_{3,7}$ is generated by the Plucker coordinates $x_A$ of $3$-subset $A$ of  \{1,2,3,4,5,6,7\} which obey the Plucker relations. By viewing $\mathbb{C}(G_{3,7})$ as a cluster algebra, these coordinates are special cluster variables and these relations are special cluster exchange relations \cite{s}. Following \cite{s}, we consider the initial quiver of $\mathbb{C}(G_{3,7})$ depicted in Figure \ref{fig:initial quiver of CG37}, where we denote the vertex of cluster variable $x_A$ as $\Delta^A$ and frame the frozen vertices.
\begin{figure}
$$\xymatrix{&\framebox{$\Delta^{712}$}\ar[d]&\framebox{$\Delta^{123}$}\ar[dr]&\framebox{$\Delta^{234}$}&\\
&\Delta^{137}\ar[ur]\ar[d]&&\Delta^{134}\ar[ll]\ar[u]&\\
&\Delta^{147}\ar[rr]\ar[dl]&&\Delta^{347}\ar[u]\ar[d]&\\
\framebox{$\Delta^{671}$}\ar[r]&\Delta^{467}\ar[u]\ar[d]&&\Delta^{457}\ar[ll]\ar[r]&\framebox{$\Delta^{345}$}\ar[ul]\\
&\framebox{$\Delta^{567}$}&&\framebox{$\Delta^{456}$}\ar[u]&}
$$
\caption{The initial quiver of the cluster algebra $\mathbb{C}(G_{3,7})$}
\label{fig:initial quiver of CG37}
\end{figure}

After ordered mutations at points $\Delta^{147},\Delta^{347},\Delta^{137},\Delta^{134},\Delta^{467},\Delta^{134}$ and $\Delta^{457}$, we obtain a new quiver with cluster $$\x=\ex\sqcup\fx=\{x_{126},x_{135},x_{235},x_{367},x',x''\}
\sqcup\{x_{123},x_{234},x_{345},x_{456},x_{567},x_{671},x_{712}\},$$ which consists of ten Plucker coordinates and other two variables  $x'$ and $x''$. The new quiver $Q$ is in Figure \ref{fig:The standard quiver of E6 type}, which is the standard bipartite quiver of $E_6$ type after deleting the frozen vertices. We still denote the point of Plucker coordinate $x_A$ as $\Delta^A$, and $\Delta'$ and $\Delta''$ are the points of the two exceptional cluster variables $x'$ and $x''$ respectively.

\begin{figure}
$$\xymatrix{\framebox{$\Delta^{712}$}&\framebox{$\Delta^{671}$}\ar[d]\ar[r]&
\Delta^{367}\ar[d]&\framebox{$\Delta^{567}$}\ar[d]&\framebox{$\Delta^{234}$}\ar[d]\\
\Delta^{126}\ar[u]\ar[dr]&\Delta'\ar[r]\ar[l]&\Delta''\ar[ul]\ar[ur]\ar[dr]\ar[d]&\Delta^{135}\ar[l]\ar[r]&
\Delta^{235}\ar[dl]\ar[dll]\\
&\framebox{$\Delta^{456}$}\ar[u]&\framebox{$\Delta^{123}$}\ar[ul]\ar[ur]&
\framebox{$\Delta^{345}$}\ar[u]\ar[uul]&
}$$
\caption{The standard quiver $Q$ of the cluster algebra $\mathbb{C}(G_{3,7})$}
\label{fig:The standard quiver of E6 type}
\end{figure}

Then a new seed $\S=(\ex,\fx,Q)$ of the cluster algebra $\mathbb{C}(G_{3,7})$ is obtained. Now we fix the exchangable cluster variable $x''$ in $\ex$, while $\Delta''$ becomes frozen in $Q$. Then note that $Q$ can be separated as three indecomposable quivers at vertex $\Delta''$, correspondingly, the rooted cluster algebra $(\mathbb{C}(G_{3,7}),\S)$ have three indecomposable rooted cluster subalgebra $\A',\A''$ and $\A'''$ with coefficients set $\{x''\}\sqcup\fx$. The quivers of these rooted cluster algebras are the $Q',Q''$ and $Q'''$ in Figure \ref{fig:subquiver 1}, \ref{fig:subquiver 2} and \ref{fig:subquiver 3} respectively.

\begin{figure}
$$\xymatrix{\framebox{$\Delta^{234}$}&\framebox{$\Delta^{712}$}&\framebox{$\Delta^{671}$}\ar[d]&&\\
\framebox{$\Delta^{345}$}&\Delta^{126}\ar[u]\ar[dr]&\Delta'\ar[r]\ar[l]&\framebox{$\Delta''$}&\\
\framebox{$\Delta^{567}$}&&\framebox{$\Delta^{456}$}\ar[u]&\framebox{$\Delta^{123}$}\ar[ul]&
}$$
\caption{The quiver $Q'$ of rooted cluster subalgebra $\A'$ of $\mathbb{C}(G_{3,7})$}
\label{fig:subquiver 1}
\end{figure}

\begin{figure}
$$\xymatrix{\framebox{$\Delta^{456}$}&&
\framebox{$\Delta^{567}$}\ar[d]&\framebox{$\Delta^{234}$}\ar[d]\\
\framebox{$\Delta^{671}$}&\framebox{$\Delta''$}&\Delta^{135}\ar[l]\ar[r]&
\Delta^{235}\ar[dl]\ar[dll]\\
\framebox{$\Delta^{712}$}&\framebox{$\Delta^{123}$}\ar[ur]&
\framebox{$\Delta^{345}$}\ar[u]&
}$$
\caption{The quiver $Q''$ of rooted cluster subalgebra $\A''$ of $\mathbb{C}(G_{3,7})$}
\label{fig:subquiver 2}
\end{figure}

\begin{figure}
$$\xymatrix{&\framebox{$\Delta^{671}$}\ar[r]&
\Delta^{367}\ar[d]&\framebox{$\Delta^{345}$}\ar[l]&\\
&&\framebox{$\Delta''$}&&\\
\framebox{$\Delta^{123}$}&\framebox{$\Delta^{234}$}&\framebox{$\Delta^{456}$}&
\framebox{$\Delta^{567}$}&\framebox{$\Delta^{712}$}
}$$
\caption{The quiver $Q'''$ of rooted cluster subalgebra $\A'''$ of $\mathbb{C}(G_{3,7})$}
\label{fig:subquiver 3}
\end{figure}
So there are $C_3^0+C_3^1+C_3^2+C_3^3=8$ complete pairs of rooted cluster subalgebra of $(\mathbb{C}(G_{3,7}),\S)$ with coefficient set $\{x''\}\sqcup\fx$.

Now we recall the Frobenius category $\C$ from \cite{jks} and correspond these complete subalgebra pairs to the cotorsion pairs in the category with the core corresponding to cluster variables in $\{x''\}\sqcup\fx$. In \cite{jks}, Jensen-King-Su described a complete algebra $A_{3,7}$ as a quotient algebra of preprojective algebra of type $\tilde{A_7}$ up to some relations. The category $\mathsf{CM}(A_{3,7})$ of maximal Cohen-Macaulay $A_{3,7}$-modules is a Hom-finite Krull-Schmidt stably 2-Calabi-Yau Frobenius category with cluster tilting objects. The Auslander-Reiten quiver of $\mathsf{CM}(A_{3,7})$ is depicted in Figure \ref{fig:AR quiver of CMA}, where the objects are represented by `profiles' of the modules, see Section 6 of \cite{jks} for details.

\begin{figure}
\begin{center}
{\begin{tikzpicture}

\node[] (C) at (0,1)
						{$\frac{146}{257}$};
\node[] (C) at (0,0)
						{\small{367}};
\node[] (C) at (0,-1)
						{\small{135}};
\node[] (C) at (0,-3)
						{\small{\framebox{$234$}}};

\node[] (C) at (1,2)
						{\small{126}};
\node[] (C) at (1,0)
						{$\frac{357}{146}$};
\node[] (C) at (1,-2)
						{\small{235}};

\node[] (C) at (2,3)
						{\small{\framebox{$712$}}};
\node[] (C) at (2,1)
						{\small{136}};
\node[] (C) at (2,0)
						{\small{145}};
\node[] (C) at (2,-1)
						{$\frac{246}{357}$};

\node[] (C) at (3,2)
						{\small{137}};
\node[] (C) at (3,0)
						{$\frac{246}{135}$};
\node[] (C) at (3,-2)
						{\small{467}};

\node[] (C) at (4,1)
						{$\frac{247}{135}$};
\node[] (C) at (4,0)
						{\small{236}};
\node[] (C) at (4,-1)
						{\small{146}};
\node[] (C) at (4,-3)
						{\small{\framebox{$567$}}};

\node[] (C) at (5,0)
						{$\cdot$};
\node[] (C) at (5.5,0)
						{$\cdot$};
\node[] (C) at (6,0)
						{$\cdot$};

\draw[thick] (0.3,1.3) -- (0.7,1.7);
\draw[thick] (0.3,0.7) -- (0.7,0.3);
\draw[thick] (0.3,0) -- (0.7,0);
\draw[thick] (0.3,-0.7) -- (0.7,-0.3);
\draw[thick] (0.3,-1.3) -- (0.7,-1.7);
\draw[thick] (0.3,-2.7) -- (0.7,-2.3);

\draw[thick] (1.3,2.3) -- (1.7,2.7);
\draw[thick] (1.3,1.7) -- (1.7,1.3);
\draw[thick] (1.3,0.3) -- (1.7,0.7);
\draw[thick] (1.3,0) -- (1.7,0);
\draw[thick] (1.3,-0.3) -- (1.7,-0.7);
\draw[thick] (1.3,-1.7) -- (1.7,-1.3);

\draw[thick] (2.3,2.7) -- (2.7,2.3);
\draw[thick] (2.3,1.3) -- (2.7,1.7);
\draw[thick] (2.3,0.7) -- (2.7,0.3);
\draw[thick] (2.3,0) -- (2.7,0);
\draw[thick] (2.3,-0.7) -- (2.7,-0.3);
\draw[thick] (2.3,-1.3) -- (2.7,-1.7);

\draw[thick] (3.3,1.7) -- (3.7,1.3);
\draw[thick] (3.3,0.3) -- (3.7,0.7);
\draw[thick] (3.3,0) -- (3.7,0);
\draw[thick] (3.3,-0.3) -- (3.7,-0.7);
\draw[thick] (3.3,-1.7) -- (3.7,-1.3);
\draw[thick] (3.3,-2.3) -- (3.7,-2.7);
\node[] (C) at (7,1)
						{$\frac{136}{247}$};
\node[] (C) at (7,0)
						{\small{125}};
\node[] (C) at (7,-1)
						{\small{357}};
\node[] (C) at (7,-3)
						{\small{\framebox{$456$}}};

\node[] (C) at (8,2)
						{\small{134}};
\node[] (C) at (8,0)
						{$\frac{136}{257}$};
\node[] (C) at (8,-2)
						{\small{457}};

\node[] (C) at (9,3)
						{\small{\framebox{$234$}}};
\node[] (C) at (9,1)
						{\small{135}};
\node[] (C) at (9,0)
						{\small{367}};
\node[] (C) at (9,-1)
						{$\frac{146}{257}$};

\node[] (C) at (10,2)
						{\small{235}};
\node[] (C) at (10,0)
						{$\frac{357}{146}$};
\node[] (C) at (10,-2)
						{\small{126}};

\draw[thick] (7.3,1.3) -- (7.7,1.7);
\draw[thick] (7.3,0.7) -- (7.7,0.3);
\draw[thick] (7.3,0) -- (7.7,0);
\draw[thick] (7.3,-0.7) -- (7.7,-0.3);
\draw[thick] (7.3,-1.3) -- (7.7,-1.7);
\draw[thick] (7.3,-2.7) -- (7.7,-2.3);

\draw[thick] (8.3,2.3) -- (8.7,2.7);
\draw[thick] (8.3,1.7) -- (8.7,1.3);
\draw[thick] (8.3,0.3) -- (8.7,0.7);
\draw[thick] (8.3,0) -- (8.7,0);
\draw[thick] (8.3,-0.3) -- (8.7,-0.7);
\draw[thick] (8.3,-1.7) -- (8.7,-1.3);

\draw[thick] (9.3,2.7) -- (9.7,2.3);
\draw[thick] (9.3,1.3) -- (9.7,1.7);
\draw[thick] (9.3,0.7) -- (9.7,0.3);
\draw[thick] (9.3,0) -- (9.7,0);
\draw[thick] (9.3,-0.7) -- (9.7,-0.3);
\draw[thick] (9.3,-1.3) -- (9.7,-1.7);

\node[] (C) at (11,-3)
						{\small{\framebox{$712$}}};
\draw[thick] (10.7,-2.7) -- (10.3,-2.3);
\end{tikzpicture}}
\end{center}
\begin{center}
\caption{The Auslander-Reiten quiver of $\mathsf{CM}(A_{3,7})$}
\label{fig:AR quiver of CMA}
\end{center}
\end{figure}
The quiver is periodical with two sides coincide under a Mobius reflection. The shape of omitted part is a two times repetition of the left side quiver. Here the framed objects in $T_{pi}=123\oplus234\oplus345\oplus456\oplus567\oplus671\oplus712$ are projective-injective objects and $T=126\oplus\frac{146}{257}\oplus\frac{357}{146}\oplus135\oplus235\oplus367\oplus123\oplus234\oplus345\oplus456\oplus567\oplus671\oplus712$
is a cluster tilting object. Under the cluster map
$\chi:\mathsf{CM}(A_{3,7})\rightarrow \mathbb{C}(G_{3,7})
${\red \cite{jks}(see also \cite{fk,birs})}
,$T$ is mapped to $\x$, where the explicit correspondence is given in Table \ref{table:cluster map correspondence}.
\begin{table}[ht]
\begin{equation*}
\begin{array}{cc}
\textrm{Rigid object in $T$} & \textrm{Cluster variable in $\x$} \\
\hline
ijk & x_{ijk}\\[1.5mm]
\frac{146}{257} & x'\\[1.5mm]
\frac{357}{146} & x''
\end{array}
\end{equation*}
\smallskip
\caption{{\red Cluster map correspondence on the cluster tilting object $T$}}
\label{table:cluster map correspondence}
\end{table}
With this correspondence, the quiver of $\End_{\mathsf{CM}(A_{3,7})}(T)$ is isomorphic to the quiver of the cluster $\x$, that is the $Q$ in Figure
\ref{fig:The standard quiver of E6 type}. Moreover, the separation of $Q$ at $\Delta''$ corresponds to a separation of $T$ as three indecomposable objects $T_1=126\oplus\frac{146}{257}\oplus\frac{357}{146}\oplus123\oplus234\oplus345\oplus456
\oplus567\oplus671\oplus712$,
$T_2=\frac{357}{146}\oplus135\oplus235\oplus123
\oplus234\oplus345\oplus456\oplus567\oplus671\oplus712$ and
$T_3=\frac{357}{146}\oplus367
\oplus123\oplus234\oplus345\oplus456\oplus567\oplus671\oplus712$, whose quivers are isomorphic to $Q_1$, $Q_2$ and $Q_3$ respectively. There are three indecomposable functionally finite extension-closed subcategories $\mathcal{X}_1$, $\mathcal{X}_2$ and $\mathcal{X}_3$ in $\mathsf{CM}(A_{3,7})$ with projective-injective object $I=\frac{357}{146}\oplus T_{pi}$. We draw up the Auslander-Reiten quivers of them in Figure \ref{fig:AR quiver of cotortion pairs 1}, \ref{fig:AR quiver of cotortion pairs 2} and \ref{fig:AR quiver of cotortion pairs 3} respectively. The objects $T_1$, $T_2$ and $T_3$ are cluster tilting objects of $\mathcal{X}_1$, $\mathcal{X}_2$ and $\mathcal{X}_3$ respectively. These subcategories make up eight cotorsion pairs of $\mathsf{CM}(A_{3,7})$ with core $I=\frac{357}{146}\oplus T_{pi}$. Finally, they correspond to eight complete subalgebras pairs of $(\mathbb{C}(G_{3,7}),\S)$ consisted of rooted cluster subalgebras $\A'$, $\A''$ and $\A'''$ described above.

\begin{figure}
\begin{center}
{\begin{tikzpicture}

\node[] (C) at (-1,0)
						{\small{\framebox{$671$}}};
\node[] (C) at (-1,1)
						{\small{\framebox{$567$}}};
\node[] (C) at (-1,2)
						{\small{\framebox{$345$}}};
\node[] (C) at (-1,3)
						{\small{\framebox{$234$}}};
\node[] (C) at (0,1)
						{$\frac{146}{257}$};

\node[] (C) at (1,2)
						{\small{126}};
\node[] (C) at (1,0)
						{\small{\framebox{$\frac{357}{146}$}}};

\node[] (C) at (2,3)
						{\small{\framebox{$712$}}};

\node[] (C) at (4,3)
						{\small{\framebox{$671$}}};

\node[] (C) at (2,1)
						{\small{136}};

\node[] (C) at (3,2)
						{\small{137}};

\node[] (C) at (4,1)
						{$\frac{247}{135}$};

\node[] (C) at (5,0)
						{\small{\framebox{$123$}}};

\node[] (C) at (5,2)
                        {\small{$\frac{146}{257}$}};
\node[] (C) at (7,0)
						{\small{\framebox{$712$}}};
\node[] (C) at (6,1)
						{$\small{126}$};

\draw[thick] (-0.7,0.3) -- (-0.3,0.7);
\draw[thick] (0.3,1.3) -- (0.7,1.7);
\draw[thick] (4.3,1.3) -- (4.7,1.7);
\draw[thick] (3.3,2.3) -- (3.7,2.7);
\draw[thick] (3.3,0.3) -- (3.7,0.7);
\draw[thick] (5.3,0.3) -- (5.7,0.7);

\draw[thick] (1.3,2.3) -- (1.7,2.7);
\draw[thick] (1.3,1.7) -- (1.7,1.3);

\draw[thick] (2.3,2.7) -- (2.7,2.3);
\draw[thick] (2.3,1.3) -- (2.7,1.7);

\draw[thick] (0.3,0.7) -- (0.7,0.3);

\draw[thick] (1.3,1.7) -- (1.7,1.3);
\draw[thick] (1.3,0.3) -- (1.7,0.7);

\draw[thick] (3.3,1.7) -- (3.7,1.3);


\node[] (C) at (3,0)
						{\small{\framebox{$456$}}};

\draw[thick] (5.3,1.7) -- (5.7,1.3);
\draw[thick] (4.3,2.7) -- (4.7,2.3);
\draw[thick] (6.3,0.7) -- (6.7,0.3);
\draw[thick] (4.3,0.7) -- (4.7,0.3);

\draw[thick] (2.3,0.7) -- (2.7,0.3);

\end{tikzpicture}}
\end{center}
\begin{center}
\caption{The Auslander-Reiten quiver of $\chi_1$}
\label{fig:AR quiver of cotortion pairs 1}
\end{center}
\end{figure}

\begin{figure}
\begin{center}
{\begin{tikzpicture}

\node[] (C) at (-1,0)
						{\small{\framebox{$712$}}};
\node[] (C) at (-1,1)
						{\small{\framebox{$671$}}};
\node[] (C) at (-1,2)
						{\small{\framebox{$456$}}};

\node[] (C) at (1,2)
						{\small{135}};
\node[] (C) at (1,0)
						{\small{\framebox{$234$}}};
\node[] (C) at (3,0)
						{\small{345}};
\node[] (C) at (2,3)
						{\small{\framebox{$\frac{357}{146}$}}};

\node[] (C) at (4,3)
						{\small{\framebox{$123$}}};
\node[] (C) at (6,3)
						{\small{\framebox{$234$}}};

\node[] (C) at (2,1)
						{\small{235}};

\node[] (C) at (3,2)
						  {\small{$\frac{246}{357}$}};

\node[] (C) at (4,1)
						{$\small{467}$};

\node[] (C) at (5,0)
						{\small{\framebox{$567$}}};

\node[] (C) at (5,2)
                        {$\small{146}$};
\node[] (C) at (7,2)
                        {$\small{235}$};
\node[] (C) at (7,0)
						{\small{\framebox{$\frac{357}{146}$}}};
\node[] (C) at (6,1)
						{$\small{135}$};

\draw[thick] (4.3,1.3) -- (4.7,1.7);
\draw[thick] (3.3,2.3) -- (3.7,2.7);
\draw[thick] (3.3,0.3) -- (3.7,0.7);
\draw[thick] (5.3,0.3) -- (5.7,0.7);

\draw[thick] (1.3,2.3) -- (1.7,2.7);
\draw[thick] (1.3,1.7) -- (1.7,1.3);

\draw[thick] (2.3,2.7) -- (2.7,2.3);
\draw[thick] (2.3,1.3) -- (2.7,1.7);

\draw[thick] (1.3,1.7) -- (1.7,1.3);
\draw[thick] (1.3,0.3) -- (1.7,0.7);

\draw[thick] (3.3,1.7) -- (3.7,1.3);

\draw[thick] (5.3,2.3) -- (5.7,2.7);

\draw[thick] (6.3,1.3) -- (6.7,1.7);

\draw[thick] (6.3,2.7) -- (6.7,2.3);


\node[] (C) at (3,0)
						{\small{\framebox{$345$}}};

\draw[thick] (5.3,1.7) -- (5.7,1.3);
\draw[thick] (4.3,2.7) -- (4.7,2.3);
\draw[thick] (6.3,0.7) -- (6.7,0.3);
\draw[thick] (4.3,0.7) -- (4.7,0.3);

\draw[thick] (2.3,0.7) -- (2.7,0.3);

\end{tikzpicture}}
\end{center}
\begin{center}
\caption{The Auslander-Reiten quiver of $\chi_2$}
\label{fig:AR quiver of cotortion pairs 2}
\end{center}
\end{figure}

\begin{figure}
\begin{center}
{\begin{tikzpicture}

\node[] (C) at (1,-1)
						{\small{\framebox{$123$}}};
\node[] (C) at (2,-1)
						{\small{\framebox{$234$}}};
\node[] (C) at (3,-1)
						{\small{\framebox{$456$}}};
\node[] (C) at (4,-1)
						{\small{\framebox{$567$}}};
\node[] (C) at (5,-1)
						{\small{\framebox{$712$}}};

\node[] (C) at (1,2)
						{\small{\framebox{$345$}}};
\node[] (C) at (1,0)
						{\small{\framebox{$671$}}};

\node[] (C) at (2,1)
						{\small{367}};
\node[] (C) at (3,1)
						{\small{\framebox{$\frac{357}{146}$}}};
\node[] (C) at (4,1)
						{\small{145}};

\node[] (C) at (5,0)
						{\small{\framebox{$345$}}};
\node[] (C) at (5,2)
						{\small{\framebox{$671$}}};

\draw[thick] (1.3,1.7) -- (1.7,1.3);
\draw[thick] (1.3,0.3) -- (1.7,0.7);

\draw[thick] (2.3,1) -- (2.6,1);
\draw[thick] (3.4,1) -- (3.7,1);

\draw[thick] (5.3,1.7) -- (5.7,1.3);
\draw[thick] (5.3,0.3) -- (5.7,0.7);

\draw[thick] (4.3,1.3) -- (4.7,1.7);
\draw[thick] (4.3,0.7) -- (4.7,0.3);
\node[] (C) at (6,1)
						{\small{367}};

\end{tikzpicture}}
\end{center}
\begin{center}
\caption{The Auslander-Reiten quiver of $\chi_3$}
\label{fig:AR quiver of cotortion pairs 3}
\end{center}
\end{figure}
\end{example}

\section*{Acknowlegement} The correspondence in Corollary 5.6 was asked by Osamu Iyama in a conference at Seoul Korea in 2014. We are grateful to him for this.
The authors also would like to thank the referees for reading the paper carefully and for many suggestions on mathematics and English expressions. The third author thanks Tiwei Zhao for his comments on the paper!

Wen Chang\\
School of Mathematics and Information Science, Shaanxi Normal University, Xi'an 710062, P. R. China.\\
E-mail: \verb"changwen161@163.com"\\[0.3cm]
Panyue Zhou\\
College of Mathematics, Hunan Institute of Science and Technology,
414006, Yueyang, Hunan,  P. R. China.\\
E-mail: \verb"panyuezhou@163.com"\\[0.3cm]
Bin Zhu\\
Department of Mathematical Sciences, Tsinghua University,
100084 Beijing, P. R. China.\\
E-mail: \verb"bzhu@math.tsinghua.edu.cn"

\end{document}